\numberwithin{equation}{section}
\begin{document}

\newtheorem{thm}{Theorem}[section]
\newtheorem{prop}[thm]{Proposition}
\newtheorem{lem}[thm]{Lemma}
\newtheorem{cor}[thm]{Corollary}
\newtheorem{rem}[thm]{Remark}
\newtheorem*{defn}{Definition}

\newcommand{\DD}{\mathbb{D}}
\newcommand{\NN}{\mathbb{N}}
\newcommand{\ZZ}{\mathbb{Z}}
\newcommand{\QQ}{\mathbb{Q}}
\newcommand{\RR}{\mathbb{R}}
\newcommand{\CC}{\mathbb{C}}
\renewcommand{\SS}{\mathbb{S}}

\newcommand{\supp}{\mathop{\mathrm{supp}}}    

\newcommand{\re}{\mathop{\mathrm{Re}}}   
\newcommand{\im}{\mathop{\mathrm{Im}}}   
\newcommand{\dist}{\mathop{\mathrm{dist}}}  
\newcommand{\link}{\mathop{\circ\kern-.35em -}}
\newcommand{\spn}{\mathop{\mathrm{span}}}   
\newcommand{\ind}{\mathop{\mathrm{ind}}}   
\newcommand{\rank}{\mathop{\mathrm{rank}}}   
\newcommand{\Fix}{\mathop{\mathrm{Fix}}}   
\newcommand{\codim}{\mathop{\mathrm{codim}}}   
\newcommand{\conv}{\mathop{\mathrm{conv}}}   
\newcommand{\epsi}{\mbox{$\varepsilon$}}
\newcommand{\eps}{\mathchoice{\epsi}{\epsi}
{\mbox{\scriptsize\epsi}}{\mbox{\tiny\epsi}}}
\newcommand{\cl}{\overline}
\newcommand{\pa}{\partial}
\newcommand{\ve}{\varepsilon}
\newcommand{\zi}{\zeta}
\newcommand{\Si}{\Sigma}
\newcommand{\cA}{{\mathcal A}}
\newcommand{\cG}{{\mathcal G}}
\newcommand{\cH}{{\mathcal H}}
\newcommand{\cI}{{\mathcal I}}
\newcommand{\cJ}{{\mathcal J}}
\newcommand{\cK}{{\mathcal K}}
\newcommand{\cL}{{\mathcal L}}
\newcommand{\cN}{{\mathcal N}}
\newcommand{\cR}{{\mathcal R}}
\newcommand{\cS}{{\mathcal S}}
\newcommand{\cT}{{\mathcal T}}
\newcommand{\cU}{{\mathcal U}}
\newcommand{\OM}{\Omega}
\newcommand{\B}{\bullet}
\newcommand{\ol}{\overline}
\newcommand{\ul}{\underline}
\newcommand{\vp}{\varphi}
\newcommand{\AC}{\mathop{\mathrm{AC}}}   
\newcommand{\Lip}{\mathop{\mathrm{Lip}}}   
\newcommand{\es}{\mathop{\mathrm{esssup}}}   
\newcommand{\les}{\mathop{\mathrm{les}}}   
\newcommand{\nid}{\noindent}
\newcommand{\pzr}{\phi^0_R}
\newcommand{\pir}{\phi^\infty_R}
\newcommand{\psr}{\phi^*_R}
\newcommand{\pow}{\frac{N}{N-1}}
\newcommand{\ncl}{\mathop{\mathrm{nc-lim}}}   
\newcommand{\nvl}{\mathop{\mathrm{nv-lim}}}  
\newcommand{\la}{\lambda}
\newcommand{\La}{\Lambda}    
\newcommand{\de}{\delta}    
\newcommand{\fhi}{\varphi} 
\newcommand{\ga}{\gamma}    
\newcommand{\ka}{\kappa}   

\newcommand{\core}{\heartsuit}
\newcommand{\diam}{\mathrm{diam}}

\newcommand{\lan}{\langle}
\newcommand{\ran}{\rangle}
\newcommand{\tr}{\mathop{\mathrm{tr}}}
\newcommand{\diag}{\mathop{\mathrm{diag}}}
\newcommand{\dv}{\mathop{\mathrm{div}}}

\newcommand{\al}{\alpha}
\newcommand{\be}{\beta}
\newcommand{\Om}{\Omega}
\newcommand{\na}{\nabla}

\newcommand{\cC}{\mathcal{C}}
\newcommand{\cM}{\mathcal{M}}
\newcommand{\nr}{\Vert}
\newcommand{\De}{\Delta}
\newcommand{\cX}{\mathcal{X}}
\newcommand{\cP}{\mathcal{P}}
\newcommand{\om}{\omega}
\newcommand{\si}{\sigma}
\newcommand{\te}{\theta}
\newcommand{\Ga}{\Gamma}

\title[A Serrin-type problem with partial knowledge of the domain]{A Serrin-type problem with \\ partial knowledge of the domain}

\author{Serena Dipierro}
\address{Serena Dipierro: Department of Mathematics and Statistics, The University of Western Australia, 35 Stirling Highway, Crawley, Perth, WA 6009, Australia}
\email{serena.dipierro@uwa.edu.au}

\author{Giorgio Poggesi}
\address{Giorgio Poggesi: Department of Mathematics and Statistics, The University of Western Australia, 35 Stirling Highway, Crawley, Perth, WA 6009, Australia}
\email{giorgio.poggesi@uwa.edu.au}

\author{Enrico Valdinoci}
\address{Enrico Valdinoci: Department of Mathematics and Statistics, The University of Western Australia, 35 Stirling Highway, Crawley, Perth, WA 6009, Australia}
\email{enrico.valdinoci@uwa.edu.au}
   
\dedicatory{To Matilde Aurora Alessi, for inspiring us with a question on heating devices}

\begin{abstract}
We present a quantitative estimate for the radially symmetric configuration
concerning a Serrin-type overdetermined problem for the torsional rigidity
in a bounded domain $\Om \subset \RR^N$, when the equation is known
on~$\Om \setminus \ol{\om}$ only, for some open subset~$\om \Subset \Om$.

The problem has concrete motivations in optimal heating with
malfunctioning, laminar flows and beams with small inhomogeneities. 
\end{abstract}

\keywords{Serrin's overdetermined problem, torsional rigidity, integral identities, stability, quantitative estimates}
\subjclass[2010]{Primary 35N25, 53A10, 35B35; Secondary 35A23}

\maketitle

\raggedbottom

\section{Introduction}
In this article we consider a variation of the classical Serrin's
overdetermined problem~\cite{Se} in which the equation is only known in a subset of the domain. We will provide quantitative
results showing, roughly speaking, that when
the part of the domain in which we do not have information is ``small'',
then the domain is ``close'' to a ball.

\subsection{Statement of the problem and main result}

The precise problem that we consider may be stated as follows.
Let $\Om \subset \RR^N$ be a bounded domain -- that is a bounded,
open, connected set, whose
boundary will be denoted by~$\Ga$,
and let $\om \Subset \Om$ be an open
(not necessarily connected) subset of $\Om$ with boundary denoted by~$\pa \om$.
We consider the following problem:
\begin{equation}\label{eq:problem}\begin{cases}
\De u = 1 \quad\text{  in  } \Om \setminus \ol{\om}, \\ u=0 \quad\text{  on  } \Ga, \end{cases}
\end{equation}
under the overdetermined condition
\begin{equation}\label{eq:overdetermination}
u_{\nu} = c \quad\text{  on  } \Ga ,
\end{equation}
for some~$c\in\RR$.
Here and in what follows $\nu$ denotes the outward unit normal of $\Om \setminus \ol{\om}$ and $u_\nu$ the derivative of $u$ in the direction $\nu$. Concerning the setting in~\eqref{eq:overdetermination}, we remark that even without explicitly imposing any
regularity assumptions on $\Ga$, \cite[Theorem 1]{Vo} guarantees that 
\begin{equation}\label{noWea}
\begin{split}&
{\mbox{if \eqref{eq:overdetermination} holds true (in the appropriate weak sense)}}\\&{\mbox{then $\Ga$ is of class $C^{2,\al}$, with~$0 < \al \le 1$,}}\end{split}\end{equation}
therefore the notation~$u_\nu$ on $\Ga$ is well posed in the classical sense, being 
$u \in C^{2,\al} \left( \left( \Om \setminus \ol{ \om} \right) \cup \Ga \right)$ by standard elliptic regularity theory.
%
%

We will further assume $u$ to be of class $C^2$ up to $\pa \om$, and hence
\begin{equation}\label{assumption:regularityuptobordino}
u \in C^2 (\ol{\Om} \setminus \om)
\end{equation}
(see Section~\ref{NOATZ} for details on this).

Concerning the regularity of the domain taken into account, to avoid unessential technicalities we will first assume that
\begin{equation}\label{A3}
{\mbox{$\pa \om$ is of class $C^1$}}\end{equation}
and that
\begin{equation}\label{A4}\begin{split}&
{\mbox{$\Om \setminus \ol{ \om }$ satisfies the ``uniform interior sphere condition'',}}\\&{\mbox{i.e. there exists $r_i>0$ such that for each $p \in \Ga \cup \pa \om $}}\\&{\mbox{there exists a ball contained in $\Om \setminus \ol{\om}$ of radius $r_i$}}\\&{\mbox{such that
its closure intersects $\Ga \cup \pa \om$ only at $p$.}}\end{split}\end{equation}
We recall, for instance, that domains with~$C^{1,1}$ boundaries
satisfy~\eqref{A4}, see e.g.~\cite[Lemma~A.1]{ROV}.

To state our main result we introduce
some notation.
For a given domain $D \subset \RR^N$, we denote by $|D|$ and $|\pa D|$
the $N$-dimensional Lebesgue measure of $D$
and the surface measure of $D$, respectively.
Our main result aims at considering a convenient point~$z$
and at obtaining suitable bounds
on the ``pseudo-distance''
\begin{equation}\label{INTRO:eq:pseudodistance}
\int_{\Ga} \left| \frac{|x-z|}{N} - c \right|^2 dS_x 
\end{equation}
and on the
``asymmetry''
\begin{equation}\label{INTRO:eq:asymmetry}
\frac{| \Om \De B_{Nc} (z) |}{|B_{Nc} (z)|} ,
\end{equation}
where $\Om \De B_{Nc} (z)$ denotes the symmetric
difference of $\Om$ and the ball $B_{Nc} (z)$ of radius $Nc$
centered at $z$.
In addition, we provide a ``geometric'' bound
on the set by estimating the difference between
the largest ball centered at~$z$ contained in~$\Omega$
and the smallest ball
centered at~$z$ that contains~$\Omega$.

The precise result that we have here goes as follows:

\begin{thm}\label{MAIN}
Let $\Om \setminus \ol{\om} \subset \RR^N$ be a bounded domain satisfying assumptions \eqref{A3} and \eqref{A4}.
Let $u$ satisfy \eqref{eq:problem}, \eqref{eq:overdetermination}
and~\eqref{assumption:regularityuptobordino}.
Assume that~$u \le 0$ on $\pa \om$.
Set
\begin{equation}\label{eq:introstaementez}
z:= \frac{1}{|\Om \setminus \ol{\om}|} \left\lbrace \int_{\Om \setminus \ol{\om}} x \, dx - N \int_{\pa \om} u\;\nu \, dS_x \right\rbrace.
\end{equation}
Then, 
\begin{equation}\label{INTRO:eq:pseudodistancestabconnormaC^2}
\int_{\Ga}  \left| \frac{|x-z|}{N} - c \right|^2 dS_x \le C | \pa \om | 
\end{equation}
and
\begin{equation}\label{INTRO:eq:asymmetrystabconnormaC^2}
\frac{| \Om \De B_{Nc} (z) |}{|B_{Nc} (z)|} \le C | \pa \om |^{1/2} .
\end{equation}
In \eqref{INTRO:eq:pseudodistancestabconnormaC^2} and \eqref{INTRO:eq:asymmetrystabconnormaC^2},
the quantity~$C$ denotes a positive constant depending only
on the dimension~$N$, on the internal radius~$r_i$
in~\eqref{A4}, on the diameter
of~$\Omega$, and on~$\|u\|_{C^2(\partial\omega)}$.

Moreover, if
\begin{equation}\label{BALB}
z \in \Om,\end{equation} then there exist~$\rho_e \ge \rho_i>0$
such that
\begin{equation}\label{CSL:1} B_{\rho_i}(z) \subset \Omega\subset B_{\rho_e}(z)\end{equation}
and
\begin{equation}\label{CSL:2}
\rho_e-\rho_i\le C \,|\pa \om|^{\tau_N/2},
\end{equation}
where
$C$ is a positive
constant depending only
on the dimension~$N$, on the internal radius~$r_i$
in~\eqref{A4}, on the diameter of~$\Omega$ and on~$\|u\|_{C^2(\partial\omega)}$, and~$\tau_N>0$ is a constant depending only on~$N$.
\end{thm}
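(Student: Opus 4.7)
I would follow the integral-identity strategy of Magnanini and Poggesi for the classical Serrin problem, adapted to account for the presence of the inner region~$\om$. Introduce the $P$-function $P := |\na u|^2 - \tfrac{2}{N} u$. Since $\De u = 1$, one has $\De P = 2\bigl(|D^2 u|^2 - \tfrac{1}{N}\bigr) = 2\bigl|D^2 u - \tfrac{1}{N}I\bigr|^2 \ge 0$, with equality exactly when $D^2 u = \tfrac{1}{N}I$. On~$\Ga$ the conditions $u = 0$ and $u_\nu = c$ force $|\na u|^2 = c^2$ and hence $P \equiv c^2$. Applying Green's second identity to $(-u, P)$ over $\Om\setminus\ol\om$, combined with the Pohozaev-type identity obtained by testing $\De u = 1$ against $(x-z)\cdot\na u$ (whose $\Ga$-boundary contribution simplifies via $\na u = c\nu$ on $\Ga$ to $c^2 \int_\Ga (x-z)\cdot\nu\, dS$), one arrives at an identity of the schematic form
\begin{equation*}
\int_{\Om\setminus\ol\om} (-u)\Bigl|D^2 u - \tfrac{1}{N}I\Bigr|^2 dx \;=\; \frac{c^2}{2N}\int_\Ga \bigl[Nc - (x-z)\cdot\nu\bigr] dS \;+\; R_\om ,
\end{equation*}
where $R_\om$ gathers the $\pa\om$-boundary contributions. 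The definition \eqref{eq:introstaementez} of $z$ is equivalent to the vectorial first-moment identity $\int_{\Om\setminus\ol\om}(x-z)\, dx + N\int_{\pa\om} u\,\nu\, dS = 0$, and this is the unique choice that cancels the linear-in-$(x-z)$ residual in $R_\om$; with this~$z$, the $C^1$-regularity of $\pa\om$ (\eqref{A3}), the hypothesis \eqref{assumption:regularityuptobordino} and the bound $\|u\|_{C^2(\pa\om)}\le M$ give $|R_\om| \le C|\pa\om|$. Moreover, integrating $\De u = 1$ over $\Om\setminus\ol\om$ yields $|\Om\setminus\ol\om| = c|\Ga| + \int_{\pa\om} u_\nu\, dS = c|\Ga| + O(|\pa\om|)$, whence the $\Ga$-term above equals $\frac{c^2}{2N}\bigl[Nc|\Ga| - N|\Om\setminus\ol\om|\bigr] + O(|\pa\om|) = O(|\pa\om|)$. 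Combining,
\begin{equation*}
\int_{\Om\setminus\ol\om} (-u)\Bigl|D^2 u - \tfrac{1}{N}I\Bigr|^2 dx \;\le\; C|\pa\om|.
\end{equation*}

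To upgrade this integrated deficit to the boundary pseudo-distance \eqref{INTRO:eq:pseudodistancestabconnormaC^2}, set $q(x) := \frac{|x-z|^2 - (Nc)^2}{2N}$ and $v := u - q$, which is harmonic on $\Om\setminus\ol\om$ and satisfies $D^2 v = D^2 u - \tfrac{1}{N} I$. The uniform interior sphere condition \eqref{A4} combined with Hopf's lemma provides a quantitative lower bound $-u(x)\ge \eta\,\dist(x, \Ga)$ near $\Ga$, with $\eta>0$ depending only on $N$, $r_i$, $\diam\Om$ and~$M$; plugged into the previous integrated deficit bound and combined with a trace/Poincar\'e-type argument applied to the harmonic function $v$ (obtained by testing $\De(-u)=-1$ against $v^2$ and $|\na v|^2$ as in the Magnanini--Poggesi framework), this yields $\int_\Ga v^2\, dS \le C|\pa\om|$. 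On $\Ga$, $v = -q = -\frac{(|x-z|-Nc)(|x-z|+Nc)}{2N}$, and the factor $|x-z|+Nc$ is controlled from above and below in terms of $\diam\Om$ and~$c$, hence \eqref{INTRO:eq:pseudodistancestabconnormaC^2} follows. The asymmetry estimate \eqref{INTRO:eq:asymmetrystabconnormaC^2} is then obtained by a standard parametrization of $\Ga$ over $\pa B_{Nc}(z)$ and Cauchy--Schwarz, accounting for the square-root loss. Under the assumption \eqref{BALB}, a standard iterated covering argument exploiting \eqref{INTRO:eq:asymmetrystabconnormaC^2} together with the interior sphere condition \eqref{A4} converts the volume asymmetry into the pointwise two-sided radial bound \eqref{CSL:2}, with exponent $\tau_N>0$ depending only on~$N$.

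The main obstacle is the control of the boundary terms on~$\pa\om$: without the specific choice \eqref{eq:introstaementez}, a linear-in-$(x-z)$ residual of order $\diam\Om\cdot|\pa\om|$ survives in $R_\om$ and swamps the desired bound; it is precisely the ``centroid'' correction $-N\int_{\pa\om} u\,\nu\, dS$ in \eqref{eq:introstaementez} that removes this obstruction. The second delicate point is the passage from the integrated deficit to the boundary $L^2$-estimate, which rests on the quantitative positivity $-u/\dist(\cdot,\Ga) \ge \eta$ near $\Ga$ -- exactly where the uniform interior sphere condition~\eqref{A4} is used.
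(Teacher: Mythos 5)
Your blueprint captures the paper's engine: the $P$-function $P=|\nabla u|^2-\tfrac{2}{N}u$, the Rellich--Pohozaev identity, Hopf's lemma through the interior sphere condition, the harmonic discrepancy $h=q-u$ whose Hessian is the Cauchy--Schwarz deficit, and the trace identity $\dv(v^2\nabla u-u\nabla v^2)=v^2-2u|\nabla v|^2$ applied componentwise to $\nabla v$. But you misidentify the role of $z$. The choice~\eqref{eq:choicez_baricenter} is not designed to ``cancel a linear-in-$(x-z)$ residual in $R_\omega$'' --- the $\partial\omega$ boundary terms in~\eqref{eq:FIdconsovradeterminazione} do not depend on $z$ at all. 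Rather, it enforces $\int_{\Om\setminus\ol\om}\nabla h\,dx=0$, which is exactly the zero-mean hypothesis required by the Poincar\'e inequality of Corollary~\ref{cor:JohnPoincareaigradienti}(ii) inside the trace estimate of Lemma~\ref{lem:genericv-trace inequality}; without it the argument does not close. (Also your moment identity carries the wrong sign: it should read $\int_{\Om\setminus\ol\om}(x-z)\,dx-N\int_{\pa\om}u\,\nu\,dS_x=0$.)

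The real gap is in the route to~\eqref{CSL:2}. You propose to pass from the volume asymmetry~\eqref{INTRO:eq:asymmetrystabconnormaC^2} to the two-sided radial bound by an ``iterated covering argument'' using the interior sphere condition. The paper does not do this, and this route would not yield the stated exponents: from a volume defect of order $|\partial\omega|^{1/2}$ plus an inner-sphere condition, a covering argument scales like an $N$-th root and gives roughly $\rho_e-\rho_i\lesssim|\partial\omega|^{1/(2N)}$, which is much weaker than $|\partial\omega|^{\tau_N/2}$ with $\tau_2=1$, $\tau_N=2/(N-1)$. Instead, the paper relates $\rho_e-\rho_i$ to the oscillation of $h$ on $\Gamma$ (formulas~\eqref{oscillation-P}--\eqref{oscillation}), bounds this oscillation by an $L^p$-norm of $h-h_{\Om\setminus\ol\om}$ over the annulus (Lemma~\ref{lem:Lp-estimate-oscillation-generic-v}), and controls that norm through Sobolev embeddings and weighted Hardy--Poincar\'e inequalities on the harmonic $h$ (Theorem~\ref{thm:serrin-W22-stability}); the dimension-dependent exponents $\tau_N$ come from the exponent bookkeeping in those Sobolev/Poincar\'e steps, not from any volume-to-radius covering argument. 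Similarly, the asymmetry estimate is not a ``standard parametrization of $\Gamma$ over $\partial B_{Nc}(z)$'' --- $\Omega$ is not assumed star-shaped --- but follows from Feldman's geometric lemma~\cite[Lemma~11]{Fe}, invoked in Lemma~\ref{lem:relationasymmetrypseudodistance} after verifying a volume condition and an inradius condition.
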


We point out that 
Theorem~\ref{MAIN} collects {\em three different aspects}
on the problem: indeed,
the statement in~\eqref{INTRO:eq:pseudodistancestabconnormaC^2}
is an ``integral'' stability result in~$L^2$
(see Section \ref{sec:Some estimates}), 
the estimate in~\eqref{INTRO:eq:asymmetrystabconnormaC^2}
provides a stability ``in measure'', and 
the result in~\eqref{CSL:2} deals with a ``pointwise'' notion
of stability. 
%
%

\medskip

The exponent~$\tau_N$ in \eqref{CSL:2} can be better
precised. Indeed, we can take
$$ \tau_2 := 1.$$
Moreover, $\tau_3$ can be taken arbitrarily close to one, in the sense that
for any $\theta>0$, we have that~\eqref{CSL:2} holds with $\tau_3 := 1- \theta$
and~$C$ depending also on~$\theta$.

Furthermore, for $N \ge 4$, we can take
$$\tau_N :=\frac{ 2}{N-1}.$$ 

We think that it is an interesting open problem
to detect whether these choices of exponents~$\tau_2$, $\tau_3$,
$\tau_N$ in \eqref{CSL:2}, as well as the exponents appearing in \eqref{INTRO:eq:pseudodistancestabconnormaC^2} and \eqref{INTRO:eq:asymmetrystabconnormaC^2},
are optimal in Theorem~\ref{MAIN}. It would also 
be very interesting to have explicit examples to check
the optimality of the structural assumptions in Theorem~\ref{MAIN}.
\medskip

We also point out that condition~\eqref{BALB} is naturally satisfied in many
concrete situations (see also Remark~\ref{REMA72}):
in particular, for ``small'' $\om$, the point~$z$ is ``close''
to the baricenter of~$\Om$, hence condition~\eqref{BALB} is fulfilled
in this case when the baricenter of~$\Om$ lies in~$\Om$ (as it happens,
for instance, for convex sets).
\medskip

Of course, when~$\omega=\varnothing$, 
we have that~\eqref{CSL:2} reduces to~$\rho_e=\rho_i$
and therefore~\eqref{CSL:1} gives that~$\Omega$ is a ball:
in this sense Theorem~\ref{MAIN} recovers the classical
results of~\cites{Se, We} for overdetermined problems.
The main difference here is that, differently from the existing
literature, the equation is supposed to hold possibly only
outside a subdomain~$\omega$: as a counterpart,
Theorem~\ref{MAIN} does not prove that the full domain is a ball,
but only that it is geometrically close to a ball whenever the
subdomain~$\omega$
has a small Lebesgue measure.\medskip

In the present setting, Theorem~\ref{MAIN} will be in fact a particular
case of more general quantitative
results, presented in Section~\ref{SEC-6}, and relying on a number
of auxiliary integral identities. For more details, see Theorems \ref{thm:stability_radii}, \ref{thm:stab_pseudodistance}, \ref{thm:stab_Asymmetry}, and~\ref{JAHS334}.\medskip

In this spirit, Theorem~\ref{MAIN} falls within the broad stream
of research aiming at obtaining quantitative rigidity results,
see e.g.~\cites{ABR, BNST, CMV, CV1, Fe, Ma, MP1, MP2, MP3, Pog2}
and the references therein. More generally, overdetermined problems have been also considered
e.g. in~\cites{AB, CV2, EP, FV1, FV2, FV3, FV4, FG, FGK, FGLP,
MR1808686, MR2002730, GL, GS, GX, PS, Pog} and in
the references therein.

\subsection{Comments on the structural assumptions and generalizations}

We remark that assumption \eqref{A4} gives a lower bound on the distance not only between $\pa \om$ and $\Ga$ (being $\dist( \pa \om , \Ga) \ge 2 r_i$), but also between the boundaries of the  different connected components of $\om$ (if any).

Interestingly,
assumptions \eqref{A3} and \eqref{A4} can be relaxed, as explained in Section \ref{sec:relaxing assumptions}.

In particular, suitable
counterparts of \eqref{INTRO:eq:pseudodistancestabconnormaC^2} and \eqref{INTRO:eq:asymmetrystabconnormaC^2}
can be obtained if~\eqref{A3} and \eqref{A4} are replaced
by the weaker assumptions 
\begin{equation}\label{A5JOHN}
{\mbox{$\Om \setminus \ol{\om}$ is a John domain}}
\end{equation}
and
\begin{equation}\label{A3bis}
{\mbox{$\Om \setminus \ol{\om}$ is of finite perimeter.}}
\end{equation}
Moreover, a counterpart of the pointwise estimate \eqref{CSL:2} can be obtained if~\eqref{A3} and \eqref{A4} are dropped and replaced
by \eqref{A5JOHN}, \eqref{A3bis}, and the assumption 
\begin{equation}\label{A4bis}
\begin{split}&
{\mbox{$\Om \setminus \ol{ \om }$ satisfies the ``uniform interior sphere condition only
on $\Ga$'',}}\\&{\mbox{i.e.,
there exists $r_i>0$ such that for each $p \in \Ga $ there exists}}\\&{\mbox{a ball contained in $\Om \setminus \ol{\om}$ of radius $r_i$ such that its closure}}\\&{\mbox{intersects $\Ga \cup \pa \om$ only at $p$ on $\Ga$.}}\end{split} 
\end{equation}

We stress that \eqref{A4bis} is equivalent to assume a lower bound only
for $\dist(\pa \om , \Ga)$. Indeed, being $\Ga$ of class $C^{2, \al}$,
the set $\Om$ surely satisfies the uniform interior sphere condition on $\Ga$.


In this situation, Theorem~\ref{MAIN} remains valid, with the following structural
modifications:
\begin{itemize}
\item The boundary measure of~$\pa\om$ is replaced by its perimeter,
namely by the $(N-1)$-dimensional
Hausdorff measure $\cH^{N-1} (\pa^* \om)$ of its reduced boundary~$\pa^*\om$. In turn, $\nu$ on $\pa^* \om$ has to be intended as the (measure-theoretic)
outer unit normal (see Section~\ref{sec:relaxing assumptions}).
\item The constants $C$ depend on $r_i$ defined in \eqref{A4bis} and on the structural constant~$b_0$
of the given $b_0$-John domain;
\item The explicit expression of the exponents~$\tau_N$ in the pointwise estimate \eqref{CSL:2} is possibly worse than the ones obtained in Theorem~\ref{MAIN}.
\end{itemize}

The definition and details for $b_0$-John domains can be found in Subsection \ref{subsec:John}.
Here, we just stress that the class of John domains is huge: in particular, if \eqref{A4} is satisfied then $\Om \setminus \ol{\om}$ is surely a $b_0$-John domain with
$b_0 \le d_\Om /r_i$ (see \cite[(iii) of Remark 3.12]{Pog2}).

The precise statement that we have in this framework is stated next, and can be deduced from more general results presented in Theorems \ref{thm:relaxed_stab_pseudodistance}, \ref{thm:relaxed_stab_Asymmetry}, \ref{thm:Johnrelaxed_radiistability} and \ref{J:l90k1234cniw}.

\begin{thm}\label{MAIN-DUE}
Let~$\Om\setminus\overline\om \subset \RR^N$ be a bounded domain satisfying assumptions \eqref{A5JOHN} and \eqref{A3bis}.
Let $u$ satisfy \eqref{eq:problem}, \eqref{eq:overdetermination}
and~\eqref{assumption:regularityuptobordino}.
Assume that~$u \le 0$ on $\pa \om$.
Set
\begin{equation*}
z:= \frac{1}{|\Om \setminus \ol{\om}|} \left\lbrace \int_{\Om \setminus \ol{\om}} x \, dx - N \int_{\pa^* \om} u\;\nu \, d \cH^{N-1} \right\rbrace.
\end{equation*}
Then, the pseudodistance defined in \eqref{INTRO:eq:pseudodistance} and the asymmetry defined in \eqref{INTRO:eq:asymmetry} satisfy
\begin{equation}\label{INTRO:eq:generapseudodistC2normajohndomain}
\int_{\Ga}  \left| \frac{|x-z|}{N} - c \right|^2 \, d\cH^{N-1} \le C {\mathcal{H}}^{N-1}(\pa^* \om ),
\end{equation}
\begin{equation}\label{INTRO:eq:relaxed_asymmetrystabconnormaC^2}
\frac{| \Om \De B_{Nc} (z) |}{|B_{Nc} (z)|} \le C {\mathcal{H}}^{N-1}(\pa^* \om )^{1/2},
\end{equation}
where the constants $C>0$ appearing in \eqref{INTRO:eq:generapseudodistC2normajohndomain} and \eqref{INTRO:eq:relaxed_asymmetrystabconnormaC^2} depend only on $N$, $b_0$, $d_\Om$, $c$, and $\|u\|_{C^2(\partial\omega)}$.

If in addition \eqref{A4bis} is verified and $z \in \Om$, then there exist~$\rho_e \ge \rho_i>0$
such that
\begin{equation*} B_{\rho_i}(z)\subset \Omega\subset B_{\rho_e}(z)\end{equation*}
and
\begin{equation}\label{INTRO:eq:CSLGENERAL}
\rho_e-\rho_i\le C \,\big({\mathcal{H}}^{N-1}(\pa^* \om)\big)^{\tau_N/2},
\end{equation}
where
%
%
$C>0$ is a constant depending only
on $N$, the internal radius~$r_i$
in~\eqref{A4bis}, $d_\Om$, $b_0$, and
on~$\|u\|_{C^2(\partial\omega)}$. 
The exponents~$\tau_N>0$ depend only on~$N$.
\end{thm}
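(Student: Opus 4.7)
The plan is to deduce Theorem \ref{MAIN-DUE} as a packaged consequence of the four more general quantitative estimates to be proved in Section~\ref{SEC-6}. The overall strategy is to adapt the Weinberger/Serrin ``P-function plus Cauchy--Schwarz'' method, in the quantitative form developed by Magnanini--Poggesi, to the partial-information setting \eqref{eq:problem}--\eqref{eq:overdetermination}, and then to relax the smoothness of~$\pa\om$ to the finite-perimeter framework and the interior sphere condition to the John-domain framework.

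First I would set up the central integral identity. Testing $\De u=1$ in $\Om\setminus\ol\om$ against the vector field $(x-z)\cdot\na u$ (a Pohozaev/Rellich pairing) and exploiting the identity
\[
(-u)\,\Big|\na^2 u-\frac{1}{N}I\Big|^2 = \dv\!\big(\ldots\big)
\]
after integration by parts, one obtains a formula of the schematic shape
\[
\int_{\Om\setminus\ol\om}(-u)\,\Big|\na^2 u-\tfrac{1}{N}I\Big|^2\,dx
 \;=\; \text{(boundary terms on $\Ga$)} \;-\; \text{(boundary terms on $\pa^*\om$)}.
\]
On~$\Ga$, the overdetermination $u_\nu=c$ converts the right-hand side into a multiple of the pseudodistance $\int_\Ga \big||x-z|/N-c\big|^2\,dS$ once one uses the divergence theorem for $|x-z|^2/(2N)$. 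The precise choice of~$z$ in \eqref{eq:introstaementez} is exactly what is required to annihilate the linear moment term in this manipulation; this step uses the Gauss--Green formula for sets of finite perimeter, so that the $\pa\om$-integral becomes an integral over~$\pa^*\om$ against the measure-theoretic normal, and the assumption \eqref{assumption:regularityuptobordino} together with $u\le 0$ on $\pa\om$ lets one control that integral by $C\,\|u\|_{C^2(\pa\om)}\,\cH^{N-1}(\pa^*\om)$.

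The second main step is to convert the integral identity into the $L^2$ pseudodistance estimate \eqref{INTRO:eq:generapseudodistC2normajohndomain}. Here I would apply a weighted Poincar\'e--Sobolev inequality valid on $b_0$-John domains (in the spirit of Hurri-Syrj\"anen and Acosta--Dur\'an--Muschietti, as used in \cite{Pog2}) to the ``P-function'' $P(u)=\tfrac12|\na u|^2-u/N$: its Hessian is controlled pointwise by the quantity $|\na^2u-\tfrac{1}{N}I|^2$ appearing on the left of the identity, and the John property is precisely what replaces the uniform interior sphere hypothesis in providing the global oscillation control of $P$. The asymmetry estimate \eqref{INTRO:eq:relaxed_asymmetrystabconnormaC^2} then follows from \eqref{INTRO:eq:generapseudodistC2normajohndomain} by a by-now-standard Markov/Chebyshev comparison argument relating the $L^2$ deviation of $|x-z|$ on $\Ga$ to the symmetric difference $\Om\De B_{Nc}(z)$.

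The hardest step, and the one requiring the extra hypothesis \eqref{A4bis}, is the pointwise estimate \eqref{INTRO:eq:CSLGENERAL}. My plan is to localize at points $x_e,x_i\in\Ga$ that realize the outer radius $\rho_e$ and the inner radius $\rho_i$ from~$z$, and to use the interior ball at such points (granted by \eqref{A4bis}) as a Hopf-type barrier: this converts a pointwise deviation $\big||x_{e/i}-z|-Nc\big|$ into a lower bound for the $L^2$ pseudodistance over a surface patch of size controlled by~$r_i$. Balancing this patch estimate against \eqref{INTRO:eq:generapseudodistC2normajohndomain} produces the exponent $\tau_N/2$, and the dependence of $\tau_N$ on the dimension (worse for $N\ge 4$, as announced after Theorem~\ref{MAIN}) will come out of this trade-off. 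The principal obstacles I foresee are (i) producing a weighted Poincar\'e inequality on the John domain $\Om\setminus\ol\om$ with a degenerate weight ($-u$ vanishing on~$\Ga$) and with explicit dependence only on $N, b_0, d_\Om$; and (ii) making the Hopf-patch-to-$L^2$ trade-off sharp enough that the exponent $\tau_N$ is dimension-intrinsic rather than being further worsened by the John geometry of the complement of~$\om$.
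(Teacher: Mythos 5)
Your high-level strategy (Rellich--Pohozaev identity plus a $P$-function Cauchy--Schwarz deficit, then weighted Poincar\'e inequalities on John domains, and finally an oscillation/patch argument for the pointwise bound) is the right skeleton, and it matches the route the paper takes through Theorems~\ref{thm:relaxed_stab_pseudodistance}, \ref{thm:relaxed_stab_Asymmetry}, \ref{thm:Johnrelaxed_radiistability} and~\ref{J:l90k1234cniw}. But there are several specific places where the proposal as written would not go through.

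First, you propose to apply the John-domain weighted Poincar\'e--Sobolev inequality to the $P$-function $P=\frac12|\na u|^2-u/N$. This cannot work as stated: the inequalities in Lemma~\ref{lem:John-two-inequalities} and Corollary~\ref{cor:JohnPoincareaigradienti} are proved for \emph{harmonic} functions, and $P$ is only subharmonic (its Laplacian equals the Cauchy--Schwarz deficit). What the paper does instead is apply those Poincar\'e inequalities to the harmonic function $h=q-u$ and to the harmonic derivatives $\partial_i h$; the $P$-function enters only inside the derivation of the integral identity of Theorem~\ref{thm:IdentityGeneral}, via the Green identity and $\De P$, not as an object to which Poincar\'e is applied.

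Second, the proposal does not address the two technical devices that actually permit dropping the regularity of $\pa\om$: (i) since the uniform interior sphere condition is only available at $\Ga$ (or not at all in the pure \eqref{A5JOHN}--\eqref{A3bis} setting), the linear growth estimate $-u\ge\frac{r_i}{2N}\de$ is lost, so one must use the quadratic bound $-u\ge\frac{1}{2N}\de^2$ from \eqref{instgr}, which forces the weight $\de^{\al}$ in the Poincar\'e inequality to be taken with $\al=1$ rather than $\al=1/2$; and (ii) the Hopf-type lower bound $u_\nu\ge r_i/N$ on $\Ga$ (used in Lemma~\ref{lem:genericv-trace inequality}) is replaced by the exact value $u_\nu=c$ from the overdetermination, which is precisely why the constants in \eqref{INTRO:eq:generapseudodistC2normajohndomain} and \eqref{INTRO:eq:relaxed_asymmetrystabconnormaC^2} depend on $c$, unlike in Theorem~\ref{MAIN}. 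Without (i) and (ii) there is no bridge from the integral identity to the weighted $L^2$ norm of $\na^2 h$.

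Third, concerning the pointwise estimate: your ``Hopf-patch'' idea is in the same spirit as what the paper does, but the paper's actual mechanism is different and its output is different. The paper localizes at the boundary point $\ol x$ maximizing $|h-\la|$, places a ball $B_{r_i}(x_0)$ with $x_0=\ol x-r_i\nu(\ol x)$, uses the mean-value property of the harmonic $h$ in that ball plus the fundamental theorem of calculus along the normal (Lemma~\ref{lem:refined_Lp-estimate-oscillation-generic-v}), and then feeds this into $L^p$ norms via Lemma~\ref{lem:L2-estimate-oscillation}. Crucially, because $\al=1$ is forced, the resulting exponents are genuinely \emph{worse} than those of Theorem~\ref{MAIN}: the text after Theorem~\ref{MAIN-DUE} gives $\tau_2=1-\theta$ and $\tau_N=2/N$ for $N\ge3$, not the $\tau_N=2/(N-1)$ of Theorem~\ref{MAIN}. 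You flag the possibility of a worsened exponent as an obstacle to avoid, but the honest answer for Theorem~\ref{MAIN-DUE} is that the worsening is unavoidable with this choice of $z$; recovering the sharper exponents requires a different center $z$ adapted to the annulus $\Om^c_{r_i}$ and is the content of the separate Theorem~\ref{MAIN-TRE}.
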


We point out that in Theorem~\ref{MAIN-DUE} we maintained the ``same''
choice \eqref{eq:introstaementez} for the point $z$.

The dependence of the constants $C$ on $c$ in \eqref{INTRO:eq:generapseudodistC2normajohndomain} and \eqref{INTRO:eq:relaxed_asymmetrystabconnormaC^2} could be replaced with the dependence on the surface measure $| \Ga |$, as explained in Remark \ref{rem:PROVA TOGLIERE C CON CONDIZIONE SU PERIMETRO}.

The explicit values of~$\tau_N$ in \eqref{INTRO:eq:CSLGENERAL} are the following ones.
We have that~$\tau_2$ can be taken as close as we wish to~$1$,
namely one can fix any~$\theta>0$ and take~$\tau_2:=1-\theta$
(in this case, the constant~$C$ in \eqref{INTRO:eq:CSLGENERAL} will also depend on~$\theta$).
When~$N\ge3$, one can take~$\tau_N:=\frac{2}{N}$.

We notice that these exponents are all smaller (i.e.,
``worse'')
than the ones obtained in Theorem~\ref{MAIN}.
Nevertheless, it is possible to get the pointwise estimate \eqref{INTRO:eq:CSLGENERAL}
with the better exponents $\tau_N$ obtained in \eqref{CSL:2}, 
and by removing the John condition \eqref{A5JOHN}, provided that
we make a different choice of $z$. 
%
%
%
%
%
%

The precise statement, that can be deduced from more general results obtained in Theorems~\ref{thm:tubular.relaxed_stability_radii} and~\ref{IUhtkS45}, is the following:

\begin{thm}\label{MAIN-TRE}
Let~$\Om\setminus\overline\om \subset \RR^N$ be a bounded domain satisfying assumptions \eqref{A3bis} and \eqref{A4bis}.
Let $u$ satisfy \eqref{eq:problem}, \eqref{eq:overdetermination}
and~\eqref{assumption:regularityuptobordino}.
Assume that~$u \le 0$ on $\pa \om$.
Set
\begin{equation}\label{LANUOVAzqui}
z:= \frac{1}{|\Om^c_{r_i} |} \left\lbrace \int_{\Om^c_{r_i}} x \, dx - N \int_{\Ga_{r_i}} u \; \nu \, dS_x \right\rbrace  ,
\end{equation}
where~$\Om^c_{r_i}$ denotes the points in~$\Om$ which lie at distance
strictly less than~$r_i$
from~$\Ga$, and~$\Ga_{r_i}$ denotes the points in~$\Om$ which lie at distance~$r_i$
from~$\Ga$.

If $z \in \Om$, then there exist~$\rho_e \ge \rho_i>0$
such that
\begin{equation*} B_{\rho_i}(z)\subset \Omega\subset B_{\rho_e}(z)\end{equation*}
and
\begin{equation*}
\rho_e-\rho_i\le C \,\big({\mathcal{H}}^{N-1}(\pa^* \om)\big)^{\tau_N/2},
\end{equation*}
where
$C>0$ is a constant depending only
on $N$, the internal radius~$r_i$
in~\eqref{A4bis}, $d_\Om$, and $\|u\|_{C^2(\partial\omega)}$. The exponents $\tau_N>0$ depend only
on~$N$.
\end{thm}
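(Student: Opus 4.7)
The plan is to deduce Theorem~\ref{MAIN-TRE} from the general quantitative estimates of Theorems~\ref{thm:tubular.relaxed_stability_radii} and~\ref{IUhtkS45}, via a strategy that systematically sidesteps the subdomain~$\om$. The key observation is that, by~\eqref{A4bis}, one has $\dist(\pa\om,\Ga)\ge 2r_i$, so the tubular neighbourhood
\[
\Om^c_{r_i}=\{x\in\Om\colon \dist(x,\Ga)<r_i\}
\]
is entirely contained in $\Om\setminus\ol{\om}$. In particular, $\De u=1$ classically on $\Om^c_{r_i}$, whose boundary is the disjoint union of $\Ga$ (carrying the overdetermination~\eqref{eq:overdetermination}) and the inner parallel surface $\Ga_{r_i}$, which is Lipschitz because $\Ga$ is of class $C^{2,\al}$ by~\eqref{noWea} and $r_i$ sits within the normal injectivity radius of $\Ga$. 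The quantitative analysis underlying Theorem~\ref{MAIN} can then be replayed on $\Om^c_{r_i}$ instead of on the full set $\Om\setminus\ol{\om}$, which is what allows us both to dispose of the John-domain assumption~\eqref{A5JOHN} and to recover the sharper exponents~$\tau_N$ of~\eqref{CSL:2}.

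Concretely, I would apply on $\Om^c_{r_i}$ the Magnanini--Poggesi ``torsional'' integral identity that underpins Theorem~\ref{MAIN}: integrating by parts the nonnegative quantity $\int_{\Om^c_{r_i}}(-u)\big(|\nabla^2 u|^2-(\De u)^2/N\big)\,dx$ produces boundary contributions on $\Ga$ which, via~\eqref{eq:overdetermination}, reduce exactly to the pseudodistance~\eqref{INTRO:eq:pseudodistance} up to an overall constant, together with boundary contributions on $\Ga_{r_i}$. The specific choice~\eqref{LANUOVAzqui} of $z$ is engineered precisely so that the linear-in-$(x-z)$ Pohozaev term on $\Om^c_{r_i}$ cancels, in direct analogy with the role of the barycentric-type point~\eqref{eq:introstaementez} in Theorem~\ref{MAIN}. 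The residual $\Ga_{r_i}$-contributions are controlled using the $C^2$-bound on $\pa\om$: since $u-\frac{|x|^2}{2N}$ is harmonic on $\Om\setminus\ol{\om}$ and its boundary values on $\Ga$ and $\pa\om$ are a priori bounded by a function of $d_\Om$ and $\|u\|_{C^2(\pa\om)}$, interior Schauder estimates on the annular strip $\Om^c_{r_i}\setminus\Om^c_{r_i/2}$ (whose geometry depends only on $N$, $r_i$ and $d_\Om$) provide $C^1$-bounds on $\Ga_{r_i}$ of the required size. This yields an $L^2$-pseudodistance bound on $\Ga$ whose right-hand side is linear in $\cH^{N-1}(\pa^*\om)$.

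Finally, the passage from the $L^2$-bound on $|x-z|/N-c$ along $\Ga$ to the pointwise bound $\rho_e-\rho_i\le C\,\cH^{N-1}(\pa^*\om)^{\tau_N/2}$ is carried out via the geometric lemma already invoked in the proof of Theorem~\ref{MAIN}: the uniform interior sphere condition~\eqref{A4bis} on $\Ga$ upgrades the $L^2$-smallness to $L^\infty$-smallness with dimension-dependent exponent $\tau_N$, exactly as in~\eqref{CSL:2}, and this exponent improvement is precisely the gain one obtains by working on the clean tubular region $\Om^c_{r_i}$ rather than on all of $\Om\setminus\ol{\om}$. The hardest part of the argument will be the careful bookkeeping of the $\Ga_{r_i}$-contributions in the integral identity: unlike $\Ga$, this auxiliary surface carries no overdetermination, so one must simultaneously track $u$, $\nabla u$ and the geometry of $\Ga_{r_i}$, and verify that the resulting error terms can be absorbed into a constant multiple of $\cH^{N-1}(\pa^*\om)$ with constants depending only on $N$, $r_i$, $d_\Om$ and $\|u\|_{C^2(\pa\om)}$.
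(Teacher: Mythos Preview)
Your proposal contains a genuine gap. You propose to run the integral identity of Corollary~\ref{cor:Identitydoposovradeterminazione} on $\Om^c_{r_i}$ and then argue that the resulting boundary terms on $\Ga_{r_i}$ can be ``absorbed into a constant multiple of $\cH^{N-1}(\pa^*\om)$''. This step fails: the interior Schauder estimates you invoke only give that $u$, $\na u$, $\na^2 u$ are bounded on $\Ga_{r_i}$ by constants depending on $N$, $r_i$, $d_\Om$, $\|u\|_{C^2(\pa\om)}$ --- i.e.\ they are $O(1)$ --- and $|\Ga_{r_i}|$ is itself $O(1)$, so each $\Ga_{r_i}$-integral in the identity is $O(1)$, not $O(\cH^{N-1}(\pa^*\om))$. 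The $\Ga_{r_i}$-terms do cancel when $\Om$ is exactly a ball, but showing they are small of order $\cH^{N-1}(\pa^*\om)$ for general $\Om$ is precisely the symmetry conclusion you are trying to prove, so the argument is circular.

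The paper's route avoids this by keeping the integral identity on the \emph{full} set $\Om\setminus\ol{\om}$, so that the only inner-boundary terms live on $\pa^*\om$ and are trivially $O(\cH^{N-1}(\pa^*\om))$ once $\|u\|_{C^2(\pa\om)}$ is bounded. The tubular region $\Om^c_{r_i}$ enters only in the second step, namely in the Poincar\'e-type inequalities and the oscillation lemma (Theorem~\ref{thm:Tubularneighborood-serrin-W22-stability}, Lemma~\ref{lem:application-refined-Lp-estimate-oscillation-generic-v}), which are applied on $D=\Om^c_{r_i}$; this is why \eqref{A5JOHN} can be dropped and the sharper $\tau_N$ recovered. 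The choice~\eqref{LANUOVAzqui} of $z$ is made so that $\int_{\Om^c_{r_i}}\na h\,dx=0$ (see~\eqref{eq:choicezperPoincaresutubularneighborood:SOTTO}), enabling item~(ii) of Corollary~\ref{cor:JohnPoincareaigradienti} on $\Om^c_{r_i}$ --- it is not a Pohozaev cancellation as you suggest. The two steps are bridged by the elementary chain
\[
\int_{\Om^c_{r_i}}\dist(\cdot,\pa\Om^c_{r_i})\,|\na^2 h|^2\,dx
\;\le\; C\int_{\Om^c_{r_i}}(-u)\,|\na^2 h|^2\,dx
\;\le\; C\int_{\Om\setminus\ol{\om}}(-u)\,|\na^2 h|^2\,dx,
\]
using~\eqref{BYUS}, $-u\ge 0$ on $\Om\setminus\ol{\om}$, and $\Om^c_{r_i}\subset\Om\setminus\ol{\om}$; the rightmost integral is what the identity on $\Om\setminus\ol{\om}$ controls (proof of Theorem~\ref{thm:tubular.relaxed_stability_radii}).
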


We stress that the approach used in Theorem~\ref{MAIN-TRE} does not
need
the assumption in~\eqref{A5JOHN}
that $\Om \setminus \ol{\om}$ is a John domain and hence the dependence on $b_0$, present in \eqref{INTRO:eq:CSLGENERAL}, has been dropped.

Interestingly, the values of the exponents~$\tau_N$ in 
Theorem~\ref{MAIN-TRE} are the same as those in Theorem~\ref{MAIN}
(and therefore they are ``better'' than the ones obtained in \eqref{INTRO:eq:CSLGENERAL},
though they rely on a different choice of~$z$).

We think that it would be interesting to investigate the possible optimality of
these exponents also in the framework provided by Theorem~\ref{MAIN-TRE}.

We remark that the setting of~$z$ in~\eqref{LANUOVAzqui}
is modeled on the annular set~$\Om^c_{r_i}$
rather than on $\Om\setminus\overline\om$ as in \eqref{eq:introstaementez}.
%
%
It would be interesting to further investigate
the impact of different possible choices for $z$.

\subsection{Organization of the paper}

The rest of this paper is organized as follows. 
Section~\ref{S2} contains some detailed motivations
from shape optimization, fluid dynamics and mechanics
which naturally lead to the problem considered in this paper.

In Section~\ref{NOATZ}
we make some notation precise.

In Section~\ref{S:1}, we present some integral identities
of Rellich-Pohozaev-type for solutions of~\eqref{eq:problem}.
In these computations, one does not need to impose the additional
condition in~\eqref{eq:overdetermination} from the beginning,
and aims at comparing a weighted ``deficit'' on~$\Omega\setminus\overline\omega$
(measuring ``how far
from rotational invariant'' the solution is) with suitable
surface integrands on~$\Gamma$ and~$\partial\omega$. {F}rom
these identities, the auxiliary information in~\eqref{eq:overdetermination} provides a more precise, and simpler information.

In Section~\ref{sec:Some estimates} we collect useful estimates and we use them to obtain a suitable
stability bound on the spherical pseudo-distance defined in \eqref{INTRO:eq:pseudodistance}
%
%
(see Theorem \ref{thm:stabestimateforpsudodistance}). We also put in relation this pseudo-distance with the asymmetry defined in \eqref{INTRO:eq:asymmetry} (see Lemma~\ref{lem:relationasymmetrypseudodistance}).

The estimates collected in Section \ref{sec:Some estimates} are then also used in Section~\ref{I:E} to bound the
difference~$\rho_e - \rho_i$.

{I}n Section~\ref{SEC-6}, from the information obtained in Sections \ref{sec:Some estimates} and \ref{I:E}, we obtain a number of quantitative results,
%
%
that also contain Theorem~\ref{MAIN}
as a particular case.

Finally, in Section \ref{sec:relaxing assumptions} we obtain generalizations of the quantitative results presented in Section \ref{SEC-6}, by relaxing the regularity assumptions \eqref{A3} and \eqref{A4},
and hence we establish Theorems~\ref{MAIN-DUE} and~\ref{MAIN-TRE} as particular cases.

\section{Models and motivations}\label{S2}

\subsection{Heating with source malfunctioning}
A natural motivation for problem~\eqref{eq:problem}-\eqref{eq:overdetermination}
comes from the {\em optimal heating theory}. In this setting,
a region~$\Omega$ is given which is in direct
contact with an external environment having constant
(say, zero) temperature. In this setting,
at the equilibrium, the temperature on the boundary of~$\Omega$
is set to be zero. The region~$\Omega$
is also provided by a fine set of heating devices.
All these devices are the same and produce the same heating effect
with the exception of those placed in a small subregion~$\omega$
which are malfunctioning, see Figure~\ref{MALF}.

\begin{figure}
    \centering
    \includegraphics[width=11cm]{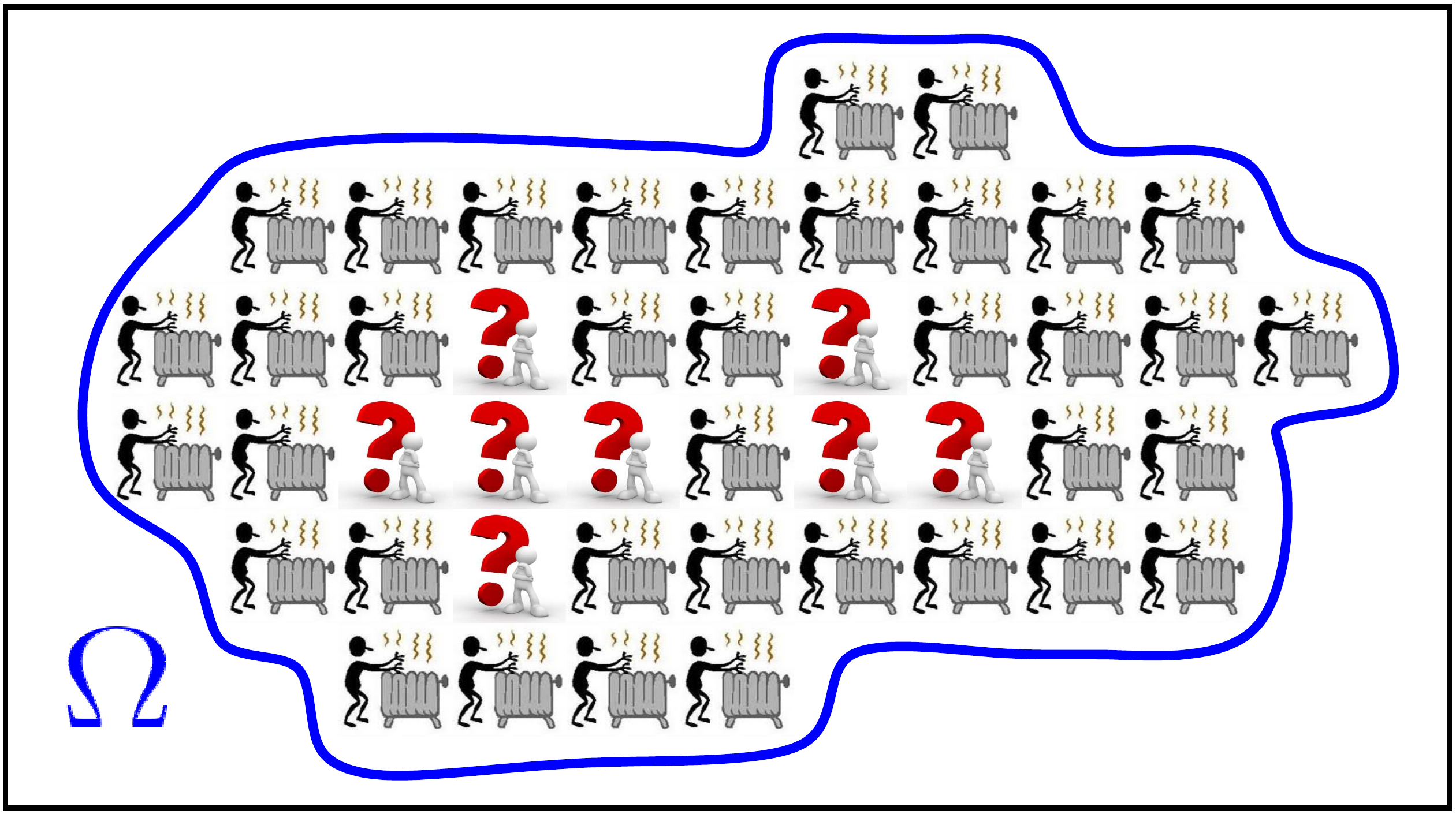}
    \caption{\em {{Matilde's problem: a region with some malfunctioning heating devices.}}}
    \label{MALF}
\end{figure}

In this setting, at the equilibrium,
the local heat flow, normalized by the flow surface, is constant,
say equal to~$1$, in~$\Omega\setminus\omega$,
but it may be different from~$1$ in~$\omega$.
In a nutshell, the mathematical description of this situation is
given by
\begin{equation}\label{0-0-1}
\begin{cases}
\Delta u=f&{\mbox{ in }}\Omega,\\
u=0&{\mbox{ on }}\partial\Omega,\end{cases}
\end{equation}
with~$f(x)=1$ for every~$x\in\Omega\setminus\omega$.

A natural question in this setting is to optimize the shape of~$\Omega$ in order to store in the domain the biggest possible
amount of caloric energy. Concretely, given~$u=u^{(f)}$ satisfying~\eqref{0-0-1},
one may try to select~$\Omega$ in order to maximize (among the domains of fixed measure) the functional
\begin{equation}\label{0-0-2} \int_\Omega |\nabla u^{(f)}(x)|^2\,dx.\end{equation}
This variational problem is well posed
(see e.g. Theorem~4.5.2 in~\cite{HP}) and it is
naturally related to the boundary
derivative prescription
\begin{equation}\label{0-0-3}
{\mbox{$\partial_\nu u^{(f)}={\rm const}$
on~$\partial\Omega$,}}\end{equation} thus leading to the problem described in~\eqref{eq:problem}-\eqref{eq:overdetermination}. We
refer to Proposition 6.1.10 in~\cite{HP}
and Appendix~\ref{APP:he} here for a direct computation relating the
shape optimization of~\eqref{0-0-2}
and the Neumann condition in~\eqref{0-0-3}.

In this framework, our result\footnote{Strictly speaking, \eqref{0-0-1} with a source $f \ge 0$ should be described as
%
an optimal cooling, rather than heating, problem:
speaking of heating problem should require to add a minus sign in front of $\De$ in \eqref{0-0-1}.
Nevertheless, we preferred to keep the sign convention in accordance with \eqref{eq:problem} and the rest of the paper.
%
%
}
in Theorem~\ref{MAIN} states that, for a small region~$\omega$ of
malfunctioning of the heat source, the optimal domain~$\Omega$ is necessarily close to a ball (with a quantitative information on the proximity between~$\Omega$ and a suitable ball).
This result is close to intuition, since jagged domains end up dissipating
most of the caloric energy from their boundaries.

\subsection{Laminar flows with a small tube of unknown density}
Another motivation of the problem in~\eqref{eq:problem}-\eqref{eq:overdetermination} comes from {\em laminar flows}, as modeled by a Navier-Stokes equation of the type
\begin{equation}\label{AJjjd55}
\partial_t(\rho v) + {\rm div}(\rho v\otimes v) -\mu\Delta v + \nabla p= -\rho g,
\end{equation}
where~$v$ is the vectorial velocity of the fluid,
$\rho$ is its density, $\mu$ is its viscosity coefficient,
$p$ is the pressure, $g$ is the gravity acceleration and~$\otimes$
denotes the outer product (i.e., given two vectors~$a$ and~$b$,
$a\otimes b$ is the matrix whose $(i,j)$th entry is~$a_ib_j$)
see e.g.~\cite{Da}.
The incompressibility condition
\begin{equation}\label{AJjjd51}
{\rm div}\,v=0\end{equation}
leads to
$$ {\rm div}(\rho v\otimes v)=(v\cdot\nabla)(\rho v).$$
In this way,
one obtains from~\eqref{AJjjd55} that
\begin{equation}\label{AJjjd57}
\partial_t(\rho v) + (v\cdot\nabla)(\rho v) -\mu\Delta v + \nabla p= -\rho g.
\end{equation}
One assumes that the flow is ``vertical'',
namely~$v=(0,0,u)$ for some scalar function~$u$, thus obtaining 
that
\begin{equation}\label{AJjjd59} (v\cdot\nabla)(\rho v)=((0,0,u)\cdot\nabla)(\rho (0,0,u))=
u\,\partial_3(\rho (0,0,u)).\end{equation}
We also suppose that the laminar flow occurs in a ``vertical tube''
of the type~$\Omega\times\RR$, with~$\Omega\subset\RR^2$,
and that the density of the fluid only depends on the horizontal
position (that is, the fluid maintains the same density along its
vertical flow). In this setting, we have that~$\rho=\rho(x_1,x_2)$
and accordingly, by~\eqref{AJjjd51} and~\eqref{AJjjd59},
$$ (v\cdot\nabla)(\rho v)=(0,0,
\rho u\,\partial_3 u)=(0,0,\rho u\,{\rm div}\,v)=0.$$
Hence, we deduce from the third component of~\eqref{AJjjd57} that
\begin{equation}\label{AJjjd60}
\rho\partial_t u   -\mu\Delta u + \partial_3 p=-\rho g.
\end{equation}
The case of a ``steady state'' flow (that is~$\partial_t u=0$)
with constant pressure (hence~$\partial_3 p= 0$) reduces~\eqref{AJjjd60} to
\begin{equation}\label{AJjjd61}
\Delta u =\frac{\rho g}\mu.
\end{equation}
If the fluid has constant (say, up to changing the inertial
reference frame, zero) velocity at the boundary of the pipe,
equation~\eqref{AJjjd61} is complemented by the boundary condition
\begin{equation}\label{AJjjd62}
u=0\qquad{\mbox{on }}\partial\Omega.\end{equation}
Also, if the fluid presents a constant tangential stress on the pipe,
we have that
\begin{equation}\label{AJjjd63}
\partial_\nu u={\rm constant}\qquad{\mbox{on }}\partial\Omega.\end{equation}
The case described in~\eqref{eq:problem}-\eqref{eq:overdetermination}
is, in this setting, a byproduct of~\eqref{AJjjd61},
\eqref{AJjjd62} and~\eqref{AJjjd63} in which
the density of the fluid (as well as its viscosity and the gravity
acceleration) is constant in the region~$\Omega\setminus \ol{\omega}$,
but it is possibly unknown in~$\omega$.
In this framework, our result in Theorem~\ref{MAIN}
says that
if a laminar fluid is homogeneous out of a small region
and presents a constant tangential stress on the pipe, then necessarily
the pipe is close to a right circular cylinder.

\subsection{Traction of beams with small inhomogeneity}
In the theory of elasticity,
one can consider
the displacement vector~$U=(U_1,U_2,U_3)$
that describes the deformation of some material.
Also, it is customary to introduce the stress tensor
\begin{equation}\label{j3}
\sigma_{ij}:=
\partial_i U_j+\partial_j U_i,\end{equation}
describing the force (per unit area)
in the $i$th direction along the infinitesimal surface
orthogonal to~$e_j$, being~$e_1:=(1,0,0)$, $e_2:=(0,1,0)$
and~$e_3:=(0,0,1)$, see e.g. formula~(129) in~\cite{Hj}
(here, we are supposing the strain and the stress to be proportional,
setting the proportionality constant equal to~$1$ for the sake of
simplicity).

In this framework, the equilibrium configurations
are those for which the forces are infinitesimally balanced, that is
\begin{equation}\label{EQysa}
\sum_{j=1}^3 \partial_j\sigma_{ij}=0\qquad{\mbox{for all }}i\in\{1,2,3\},\end{equation}
see e.g. formula~(189) in~\cite{Hj}.

More specifically,
we focus here on the torsion of a vertical beam~${\mathcal{B}}$
(see e.g.~\cite[pages 100-120]{So}).
The fact that the beam is vertical means, in our setting, that
\begin{equation}\label{BA}
{\mathcal{B}}=\{(x_1,x_2,x_3) {\mbox{ s.t. }}(x_1,x_2)\in\Omega_{x_3}\},\end{equation}
for a suitable (bounded and smooth) family
of domains~$\Omega_{x_3}\subset\RR^2$.
We assume that each point of the beam lying on a given
horizontal plane~$\{x_3=\bar x_3\}$ performs a horizontal
rotation of a small angle~$\vartheta=\vartheta(x)$,
plus a vertical movement that is the same for every
height of the bar (we will denote this vertical movement
by~$v=v(x_1,x_2)$).
In this setting, the torsion of the bar moves a point~$x=(x_1,x_2,\bar x_3)$
to the point
$$X:=(x_1\cos\vartheta+x_2\sin\vartheta,
-x_1\sin\vartheta+x_2\cos\vartheta,\bar x_3+v)\simeq
(x_1+x_2\vartheta,-x_1\vartheta+x_2,\bar x_3+v),$$ and so,
in this approximation, we can write the displacement vector as
\begin{equation*}
U=X-x=( \vartheta\,x_2,-\vartheta\,x_1,v).\end{equation*}
{F}rom this and~\eqref{j3}, we can write
\begin{equation}\label{HAS34689JSA}
\begin{split}&
\sigma_{31}=\partial_3 U_1+\partial_1 U_3=\partial_3 \vartheta \,x_2+
\partial_1 v,
\\&
\sigma_{32}=\partial_3 U_2+\partial_2 U_3
=-\partial_3 \vartheta\,x_1+\partial_2 v,\\
{\mbox{and}}\qquad
&\sigma_{33}=2\partial_3 U_3=0.\end{split}
\end{equation}
As a result, exploiting the latter equation and~\eqref{EQysa} with~$i:=3$,
\begin{equation}\label{A83uf565}
0=\sum_{j=1}^3 \partial_j\sigma_{3j}=
\partial_1\sigma_{31}+\partial_2\sigma_{32}
.\end{equation}
We then 
fix~$x_3=1$ and
consider the $1$-form on~$\RR^2$
given by
\begin{equation}\label{A83uf565ddd} \alpha:=-\sigma_{32}\,dx_1+\sigma_{31}\,dx_2,\end{equation}
and we deduce from~\eqref{A83uf565} that
$$ d\alpha=\big(
\partial_2\sigma_{32}+\partial_1\sigma_{31}
\big)\,dx_1\wedge dx_2=0.$$
This gives that there exists a ``warping potential'' $u$ such that~$\alpha=du$ as $1$-forms in~$\RR^2$. Accordingly, by~\eqref{A83uf565ddd},
we have that
\begin{equation}\label{083ru776764848} \partial_1u=-\sigma_{32}\qquad{\mbox{and}}\qquad
\partial_2u=\sigma_{31}.
\end{equation}
Combining this with~\eqref{HAS34689JSA}, one sees that
$$ \partial_1u=\partial_3 \vartheta\,x_1-\partial_2 v\qquad{\mbox{and}}\qquad
\partial_2u=\partial_3 \vartheta \,x_2+
\partial_1 v,
$$
and therefore
\begin{equation}\label{THE01}
\Delta u= \partial_1\big(\partial_3 \vartheta\,x_1-\partial_2 v\big)+
\partial_2\big(
\partial_3 \vartheta \,x_2+
\partial_1 v\big)=2\partial_3\vartheta+\partial^2_{1,3}\vartheta\,x_1+
\partial^2_{2,3}\vartheta\,x_2,
\end{equation}
with these functions evaluated at~$x_3=1$.
As a special case, one can take into account the situation in which
the angle~$\vartheta$ depends linearly on the height of the beam,
say~$\vartheta(x)=\theta(x_1,x_2)\,x_3$
(this is physically reasonable, for instance, if the beam is constrained
at~$\{x_3=0\}$ and some torque is applied from the top of the beam). In this setting,
\eqref{THE01} reduces to
\begin{equation}\label{THE02-0}
\Delta u=2\theta+\partial_{1}\theta\,x_1+
\partial_2\theta\,x_2.
\end{equation}
If we suppose that the beam is built by two different materials,
one occupying~$\Omega\setminus \ol{\omega}$
and the other
occupying~$\omega$
(where~$\Omega$ here represents the domain~$\Omega_{x_3}$
in~\eqref{BA} with~$x_3=1$), the expression in~\eqref{THE02-0}
takes two different forms in~$\Omega\setminus \ol{\omega}$
and~$\omega$. In particular, if we know that the material in~$\Omega\setminus \ol{\omega}$ is homogeneous
we can suppose that the horizontal
rotation is uniform there and thus~$\theta$ is independent on the point (i.e., $\partial_1\theta=
\partial_2\theta=0$), hence deducing from~\eqref{THE02-0} that
\begin{equation}\label{THE02}
\Delta u={\rm constant}\qquad{\mbox{in }}\Omega\setminus \ol{\omega} .
\end{equation}
Also, the surface traction,
as a force per
unit of area, at a boundary point of the beam is defined
as the normal component of the vertical stress, that is
$$ T:=(\sigma_{31},\sigma_{32},\sigma_{33})\cdot \bar\nu,$$
being~$\bar\nu\in\RR^3$ the normal to the beam (see e.g. 
the third component in formula~(174) of~\cite{Hj}).
Recalling~\eqref{HAS34689JSA}, we have that
$$ T=(\sigma_{31},\sigma_{32},0)\cdot \bar\nu=
(\sigma_{31},\sigma_{32})\cdot\nu,$$
being~$\nu\in\RR^2$ normal to~$\Omega$.
Thus, in view of~\eqref{083ru776764848},
$$ T=(\partial_2u,-\partial_1u)\cdot\nu=
\nabla u\cdot\tau,
$$
being~$\tau:=(-\nu_2,\nu_1)$ a unit tangent vector to~$\Omega$.
Therefore, if the traction vanishes, the tangential derivative
of~$u$ vanishes as well, hence~$u$ is constant along~$\partial\Omega$.
Since~$u$ was introduced as a potential, it is defined up to an additive
constant, hence we can rephrase these considerations by saying that
if the traction vanishes, then
\begin{equation}\label{THE03}
u=0\qquad{\mbox{on }}\partial\Omega.
\end{equation}
We also remark that, if~$\sigma_3=(\sigma_{31},\sigma_{32},\sigma_{33})$, then
$$|\sigma_3|=|(\sigma_{31},\sigma_{32},0)|=|\nabla u|,$$
thanks to~\eqref{HAS34689JSA} and~\eqref{083ru776764848}.

Hence, for constant stress intensity~$|\sigma_3|$, we deduce from~\eqref{THE03} that
\begin{equation}\label{THE04}
\partial_\nu u={\mbox{constant}}\qquad{\mbox{on }}\partial\Omega.
\end{equation}
We thus
observe that problem~\eqref{eq:problem}-\eqref{eq:overdetermination}
arises naturally from~\eqref{THE02}, \eqref{THE03}
and~\eqref{THE04}, and,
in this framework, Theorem~\ref{MAIN} says that
if a beam is homogeneous out of a small region
and presents zero traction and constant stress intensity, then necessarily the horizontal
section of the beam is close to a disk.

\section{Notation}\label{NOATZ}

Unless differently specified,
we will denote by $\Om\subset\RR^N$, with $N\ge 2$,
a connected, bounded open set, and call $\Ga:=\partial\Omega$ its boundary. 
We will denote indifferently by $|\Om|$ and $|\Ga|$
the $N$-dimensional Lebesgue measure of $\Om$
and the surface measure of $\Ga$. 

Moreover, we will denote by $d_\Om$ the diameter of $\Om$, that is
\begin{equation}\label{DIAMET} d_\Om:=\sup_{x,y\in\Om}|x-y|.\end{equation}
Also, we shall denote by $\om $ an open subset of $\Om$, such that~$\overline\om\subset\Om$.

For all~$x \in \Om \setminus \ol{\om}$, we consider the distance function
\begin{equation}\label{DIST}
\de (x) := \dist(x, \Ga \cup \pa \om).
\end{equation}
\smallskip

As already mentioned in the Introduction, we first consider the
case in which, being \eqref{A3} in force and by recalling \eqref{noWea}, $\Om \setminus \ol{\om}$ is of class~$C^1$:
in this setting~$\nu$ denotes the (exterior) unit normal vector field
to~$\Om \setminus \ol{\om}$.
Then, we will clarify in Section~\ref{sec:relaxing assumptions}
the notation that we use when the regularity assumption in~\eqref{A3} is dropped.
\smallskip

Now, we clarify the notation used for the spaces~$C^k$ and we discuss
the regularity assumption in~\eqref{assumption:regularityuptobordino}.

As usual, for an open set $D \subset \RR^N$, and $k$
a positive integer, $C^k (D)$ denotes the space of functions possessing continuous derivatives up to order $k$ on $D$.

By $C^k(\ol{D})$ we denote the space of functions that are
restrictions to~$\ol{D}$ of functions in~$C^k (\RR^N)$.
In other words, $C^k (\ol{D})$ is the space of the functions in $C^k (D)$
that can be extended to $C^k (\RR^N)$. The same definition has been adopted
for instance in
\cite[Appendix C, pag. 562]{Leoni} and also in many books
of Differential Geometry.

We recall that, thanks to Whitney extension theorem
(see the original paper \cite{Wh} or \cite[Theorem 2.3.6]{KP-Primer}),
this definition is equivalent to \cite[Definition 2.3.5]{KP-Primer} based on
a Taylor-expansion condition.

Moreover, if $D$ satisfies property (P) of \cite{Wh-boundaries} -- i.e.,
quasiconvexity, that in particular is surely satisfied if $D$ is Lipschitz
(see \cite[Sections 2.5.1, 2.5.2]{Br}) -- then, thanks to the main theorem
on page~485
in \cite{Wh-boundaries}, our definition of $C^k(\ol{D})$
(as well as \cite[Definition 2.3.5]{KP-Primer})
is also equivalent to the definition used in many books of PDEs
(e.g., \cite{GT}), that is:
$C^k ( \ol{D})$ is the space of functions in $C^k (D)$
whose derivatives up to order~$k$ have continuous extensions to the
closure~$\ol{D}$.

For more general domains, \cite[Definition 2.3.5]{KP-Primer} (as well
as our definition) is stronger than that adopted in \cite{GT}. 
For more details on this subject we refer to \cite[Section~2.3]{KP-Primer},
\cite[Section~5.2]{KP}, \cite{Kra}, \cite{Br}, and \cite[Appendix C]{Leoni}.

In our setting, by taking $D:= \Om \setminus \ol{\om}$ and $k:=2$
in the definition of $C^k (\ol{D})$, we have that the
assumption in~\eqref{assumption:regularityuptobordino} guarantees
that~$u$ can be extended to a~$C^2$ function throughout~$\RR^N$. 
This will allow us to perform the generalizations described in
Section~\ref{sec:relaxing assumptions} when the regularity assumption
on $\pa \om$ in~\eqref{A3} is dropped and replaced just by \eqref{A3bis}. 
In particular, when integrating by parts, we can still write the derivatives of $u$ up to the second order on the reduced boundary $\pa^* \om$.

We remark that up to Section~\ref{SEC-6},
we will take~$\Om \setminus \ol{\om}$ to be of class $C^1$,
and therefore assumption \eqref{assumption:regularityuptobordino} has
univocal meaning, no matter what definition of $C^2( \ol{\Om} \setminus \om )$
we adopt (among the three presented above). 

\section{Integral identities}\label{S:1}

The goal of this section is to develop a series of integral identities which will be conveniently exploited
to deduce quantitative bounds on the solution of~\eqref{eq:problem}-\eqref{eq:overdetermination}
and on its domain of definition. 
We start by proving a Rellich-Pohozaev-type identity
and its consequences. Notice that in the next two statements we are not imposing yet
the overdetermined condition in~\eqref{eq:overdetermination}.

\begin{lem}[A Rellich-Pohozaev-type identity]
\label{lem:AReillichPohozaevIdentity}
Suppose that~$\Om\setminus\overline\om$
is of class~$C^1$.
If $u \in C^2 (\ol{\Om} \setminus \om)$ satisfies \eqref{eq:problem}, then the following identity holds:
\begin{equation}\begin{split}\label{eq:Pohozaevadhoc}&
(N+2) \int_{ \Om \setminus \ol{\om} }  | \na u|^2  dx =  \int_\Ga <x , \nu> u_{\nu}^2 \, dS_x 
\\&\qquad\qquad+
 2 N \int_{\pa \om} \left\lbrace u \, u_\nu - \frac{< x , \nu>}{N} u + \frac{ <x, \na u > }{N} u_\nu - \frac{ <x , \nu> }{2N} | \na u |^2 \right\rbrace  dS_x .
\end{split}\end{equation}
\end{lem}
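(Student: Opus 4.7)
The plan is to start from the classical pointwise Pohozaev differential identity
\begin{equation*}
\dv\!\bigl( 2 \langle x, \na u \rangle \na u - x\,|\na u|^2 \bigr) = 2 \langle x, \na u \rangle \De u - (N-2) |\na u|^2,
\end{equation*}
which holds for any $C^2$ function and is verified by a direct computation (expanding $\partial_i(x_j u_j u_i)$ and $\partial_i(\tfrac12 x_i|\na u|^2)$ and observing that the $x_j\partial_j|\na u|^2$ terms cancel). I would then integrate this identity over the admissible region $\Om\setminus\ol{\om}$ and invoke the divergence theorem. Because $\nu$ is the \emph{outward} normal of $\Om\setminus\ol{\om}$, the boundary of $\Om\setminus\ol{\om}$ splits as $\Ga\cup\pa\om$ with $\nu$ pointing outward on $\Ga$ and pointing into $\ol{\om}$ on $\pa\om$; using $\De u =1$ in the bulk, this yields
\begin{equation*}
2\int_{\Om\setminus\ol{\om}} \langle x,\na u\rangle\,dx + (N-2)\int_{\Om\setminus\ol{\om}} |\na u|^2\,dx = \int_{\Ga\cup\pa\om} \bigl[2\langle x,\na u\rangle u_\nu - \langle x,\nu\rangle |\na u|^2 \bigr] dS_x .
\end{equation*}

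Next I would simplify the $\Ga$-part of the boundary integral via the Dirichlet condition $u=0$ on $\Ga$. This forces $\na u = u_\nu \nu$ on $\Ga$, so $\langle x,\na u\rangle = \langle x,\nu\rangle u_\nu$ and $|\na u|^2 = u_\nu^2$; the integrand on $\Ga$ therefore collapses to $\langle x,\nu\rangle u_\nu^2$. To eliminate the two bulk integrals that remain, I would use two auxiliary integration-by-parts formulas. First, $\dv(xu) = N u + \langle x,\na u\rangle$ gives
\begin{equation*}
\int_{\Om\setminus\ol\om} \langle x,\na u\rangle\,dx = \int_{\pa\om} u\,\langle x,\nu\rangle\,dS_x - N\int_{\Om\setminus\ol\om} u\,dx,
\end{equation*}
where the $\Ga$-contribution vanishes because $u=0$ on $\Ga$. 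Second, Green's identity applied to $\De u = 1$ and $u=0$ on $\Ga$ yields
\begin{equation*}
\int_{\Om\setminus\ol\om} |\na u|^2\,dx = \int_{\pa\om} u\,u_\nu\,dS_x - \int_{\Om\setminus\ol\om} u\,dx,
\end{equation*}
so that $\int u\,dx$ can be traded for $\int |\na u|^2\,dx$ modulo a $\pa\om$ boundary term.

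Finally, I would substitute these two expressions into the Pohozaev identity and collect the bulk terms: the $\int\langle x,\na u\rangle dx$ contribution produces $2N\int|\na u|^2$ (via the elimination of $\int u\,dx$), which combines with the $-(N-2)\int|\na u|^2$ on the other side to give the desired coefficient $(N+2)$. The remaining surface integrands over $\pa\om$ gather into
\begin{equation*}
-2u\langle x,\nu\rangle + 2Nu\,u_\nu + 2\langle x,\na u\rangle u_\nu - \langle x,\nu\rangle |\na u|^2,
\end{equation*}
which, after factoring $2N$, is precisely the bracket in \eqref{eq:Pohozaevadhoc}. There is no genuine obstacle here beyond careful bookkeeping of signs: the principal pitfall is the orientation of $\nu$ on $\pa\om$ (it is outward for $\Om\setminus\ol\om$, hence inward for $\om$), and the regularity \eqref{assumption:regularityuptobordino}, together with the $C^1$ assumption on $\pa(\Om\setminus\ol\om)$, is exactly what is needed to legitimately apply the divergence theorem on the annular region.
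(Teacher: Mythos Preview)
Your proof is correct and follows essentially the same route as the paper's: integrate the Pohozaev differential identity over $\Om\setminus\ol\om$, collapse the $\Ga$-integrand via $\na u = u_\nu\nu$, and eliminate the bulk term $\int\langle x,\na u\rangle\,dx$ by auxiliary integration by parts (the paper packages your two auxiliary formulas into the single identity $\dv\bigl(\tfrac{x}{N}u - u\,\na u\bigr) = \tfrac{\langle x,\na u\rangle}{N} - |\na u|^2$, but this is only cosmetic). One slip to fix: in your first displayed equation the sign on $(N-2)\int|\na u|^2\,dx$ should be negative, since the pointwise identity has $-(N-2)|\na u|^2$ on the right---your later sentence ``combines with the $-(N-2)\int|\na u|^2$ on the other side'' already uses the correct sign, so the final collection of terms is right.
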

\begin{proof}
By a direct computation, it is easy to verify the following differential identity (valid for every function~$u$):
\begin{equation}\label{OR}
\dv \left\lbrace <x, \na u> \na u -   \frac{| \na u|^2}{2} \, x \right\rbrace = <x,\na u> \De u - \frac{N-2}{2} \, |\na u|^2 .
\end{equation}
We now specialize this identity to a solution of~\eqref{eq:problem}.
For this, we remark that the Dirichlet boundary condition in~\eqref{eq:problem}
gives that
\begin{equation}\label{OR2}
{\mbox{$\na u = u_\nu \, \nu$ on $\Ga$.}}\end{equation}
We also remark that
\begin{equation}\label{OR3}
\partial\big({\Om \setminus \ol{\om} }\big)=\Gamma\cup\partial\omega.
\end{equation}
Hence, by integrating~\eqref{OR} over $\Om \setminus \ol{\om}$,
exploiting~\eqref{eq:problem}, \eqref{OR2}
and~\eqref{OR3}, and applying the
divergence theorem, we see that
\begin{equation}\begin{split}\label{eq:1perPohozaev}&
\int_{\Om \setminus \ol{\om} } \left\lbrace <x,\na u> - \frac{N-2}{2} \, |\na u|^2 \right\rbrace \, dx 
\\=\;&
\int_{\Om \setminus \ol{\om} } \left\lbrace <x,\na u>\Delta u - \frac{N-2}{2} \, |\na u|^2 \right\rbrace \, dx 
\\=\;&
\int_{\Om \setminus \ol{\om} }
\dv \left\lbrace <x, \na u> \na u -   \frac{| \na u|^2}{2} \, x \right\rbrace
\,dx
\\=\;&
\int_{\partial(\Om \setminus \ol{\om}) }
\left\lbrace <x, \na u> \;< \na u , \nu  >-   \frac{| \na u|^2}{2} \, <x,\nu> \right\rbrace
\,dS_x
\\ =\;&
\frac{1}{2} \int_{\Ga} <x, \nu> u_\nu^2 \, dS_x + \int_{\pa \om} \left\lbrace <x, \na u> \, u_\nu -  | \na u|^2 \frac{ <x, \nu > }{2}  \right\rbrace \, dS_x. 
\end{split}\end{equation}
Now we observe that, in $\Om \setminus \ol{\om}$,
\begin{eqnarray*}
\dv{ \left( \frac{x}{N} \, u - u \, \na u \right) } &=&
u+\frac{< x , \na u >}{N}-| \na u|^2-u\,\Delta u
\\&=& \frac{< x , \na u >}{N} - | \na u|^2 ,
\end{eqnarray*}
where the equation in~\eqref{eq:problem} has been used in the last step.

As a consequence,
$$ < x , \na u >=N\,| \na u|^2+N\,\dv{ \left(\frac{ x}N \, u - u \, \na u \right) } .$$
By integrating this identity over $\Om \setminus \ol{\om}$,
and using the boundary condition in~\eqref{eq:problem}, we deduce that
$$
\int_{\Om \setminus \ol{\om} } < x, \na u  > \, dx = N \int_{\Om \setminus \ol{\om} } | \na u|^2 \, dx + N \int_{ \pa \om } \left\lbrace \frac{ <x, \nu > }{N} u - u \, u_\nu \right\rbrace dS_x .
$$
By putting together the last identity and \eqref{eq:1perPohozaev}, 
we obtain~\eqref{eq:Pohozaevadhoc}, as desired.
\end{proof}

We now obtain another useful
integral identity, which is based on~\eqref{eq:Pohozaevadhoc}
and a suitable $P$-function computation.

\begin{thm}
\label{thm:IdentityGeneral}
Suppose that~$\Om\setminus\overline\om$
is of class~$C^1$.
If $u \in C^2 (\ol{\Om} \setminus \om)$ satisfies \eqref{eq:problem}, then the following identity holds:
\begin{equation}\label{eq:FId}\begin{split}&
\int_{\Om \setminus \ol{\om} } (-u) 2 \left\lbrace | \na^2 u|^2- \frac{(\De u)^2}{N} \right\rbrace dx 
\\ =\;&
\int_\Ga u_\nu^2 \left( u_\nu - \frac{ < x, \nu>}{N} \right) \, dS_x +
 \int_{\pa \om } 2u \left( \frac{< x, \nu >}{N} - u_\nu \right) dS_x 
\\ &\quad+
\int_{\pa \om } \left\lbrace u_\nu | \na u |^2 - 2 \frac{< x, \na u >}{N} u_\nu + | \na u|^2 \frac{< x , \nu >}{N}
+ \frac{2}{N} u u_\nu - 2 <\na^2 u \na u , \nu > u \right\rbrace dS_x .
\end{split}\end{equation}
\end{thm}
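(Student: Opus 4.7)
The plan is to introduce the classical $P$-function associated with the torsion problem and then to combine the resulting divergence identity with the Rellich-Pohozaev identity of Lemma~\ref{lem:AReillichPohozaevIdentity}. Concretely, since~$\Delta u = 1$ in $\Om\setminus\ol{\om}$, I define
\begin{equation*}
P := |\nabla u|^2 - \frac{2u}{N},
\end{equation*}
and I compute, exploiting Bochner's formula together with $\Delta u=1$ and $\nabla\Delta u=0$,
\begin{equation*}
\Delta P = 2|\nabla^2 u|^2 + 2\,\nabla u\cdot\nabla\Delta u - \frac{2\Delta u}{N} = 2\left(|\nabla^2 u|^2 - \frac{(\Delta u)^2}{N}\right).
\end{equation*}
Hence the left-hand side of~\eqref{eq:FId} equals $\int_{\Om\setminus\ol{\om}}(-u)\,\Delta P\,dx$.

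Next, I apply Green's second identity on $\Om\setminus\ol{\om}$ (whose boundary is $\Gamma\cup\partial\om$, cf.~\eqref{OR3}), with $\nu$ pointing out of $\Om\setminus\ol{\om}$:
\begin{equation*}
\int_{\Om\setminus\ol{\om}}(-u)\,\Delta P\,dx = -\int_{\Om\setminus\ol{\om}} P\,\Delta u\,dx + \int_{\Gamma\cup\partial\om}\bigl(P\,u_\nu - u\,P_\nu\bigr)\,dS_x.
\end{equation*}
Since $\Delta u = 1$, the volume term becomes $-\int_{\Om\setminus\ol{\om}}P\,dx = -\int |\nabla u|^2\,dx + \frac{2}{N}\int u\,dx$. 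To eliminate $\int u\,dx$ I integrate $u\,\Delta u = u$ by parts, using $u=0$ on $\Gamma$, to obtain
\begin{equation*}
\int_{\Om\setminus\ol{\om}} u\,dx = -\int_{\Om\setminus\ol{\om}}|\nabla u|^2\,dx + \int_{\partial\om} u\,u_\nu\,dS_x,
\end{equation*}
so that
\begin{equation*}
-\int_{\Om\setminus\ol{\om}} P\,dx = -\frac{N+2}{N}\int_{\Om\setminus\ol{\om}}|\nabla u|^2\,dx + \frac{2}{N}\int_{\partial\om} u\,u_\nu\,dS_x.
\end{equation*}
At this point the key step is to invoke Lemma~\ref{lem:AReillichPohozaevIdentity} to rewrite the volume integral of $|\nabla u|^2$ as boundary contributions on $\Gamma$ and $\partial\om$: this converts every volume quantity into surface terms of exactly the type appearing in~\eqref{eq:FId}.

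It then remains to compute the boundary terms $P u_\nu - u P_\nu$. Using $\nabla|\nabla u|^2 = 2\nabla^2 u\,\nabla u$, one gets $P_\nu = 2\langle\nabla^2 u\,\nabla u,\nu\rangle - 2u_\nu/N$, whence
\begin{equation*}
P u_\nu - u P_\nu = u_\nu|\nabla u|^2 - 2u\,\langle\nabla^2 u\,\nabla u,\nu\rangle
\end{equation*}
on all of $\Gamma\cup\partial\om$; on $\Gamma$ this simplifies further via $u=0$ and $\nabla u=u_\nu\nu$ to $u_\nu^3$. Combining this with the Rellich-Pohozaev contribution gives on $\Gamma$ the clean expression $u_\nu^3 - \frac{\langle x,\nu\rangle}{N}u_\nu^2 = u_\nu^2\bigl(u_\nu - \frac{\langle x,\nu\rangle}{N}\bigr)$, and on $\partial\om$ all remaining terms are precisely the ones listed in~\eqref{eq:FId}.

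The main obstacle is purely bookkeeping: one must track sign conventions (recall $\nu$ is the outward normal of $\Om\setminus\ol{\om}$, hence points \emph{into} $\om$ on $\partial\om$) and carry out the algebraic cancellation of the $uu_\nu$ and $\langle x,\nu\rangle u$ terms coming from three different sources (the $P$-function, the integration by parts of $\int u$, and the Rellich-Pohozaev identity). No further ingredient beyond Lemma~\ref{lem:AReillichPohozaevIdentity} and standard Green's identities is required, and the regularity assumption $u\in C^2(\ol{\Om}\setminus\om)$ from~\eqref{assumption:regularityuptobordino} guarantees that all integrals and traces on $\Gamma$ and $\partial\om$ are classical.
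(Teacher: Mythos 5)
Your proposal is correct and follows essentially the same route as the paper: introduce the $P$-function $P=|\nabla u|^2-\frac{2u}{N}$, compute $\Delta P$ to recover the Cauchy--Schwarz deficit, apply Green's second identity, dispose of the volume integral of $P$ by integrating $u\,\Delta u=u$ by parts and then substituting the Rellich--Pohozaev identity of Lemma~\ref{lem:AReillichPohozaevIdentity}, and finally simplify $Pu_\nu-uP_\nu$ on $\Gamma$ and $\partial\om$. The only cosmetic difference is your invocation of ``Bochner's formula'' where the paper labels the $\Delta P$ computation a direct calculation; the content is the same.
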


We point out that
the term in the braces in the left-hand side of \eqref{eq:FId} could be written as~$
\left\lbrace | \na^2 u|^2- \frac{1}{N} \right\rbrace $.
Nevertheless, we preferred to use the
notation
\begin{equation}\label{eq:CSRIMESSOALLAFINEforseceragia}
 | \na^2 u|^2 - \frac{(\De u)^2}{N} 
\end{equation}
to emphasize that this quantity plays the role of a
{\it Cauchy-Schwarz deficit}. In fact, by Cauchy-Schwarz inequality we have that \eqref{eq:CSRIMESSOALLAFINEforseceragia} is nonnegative, and equals $0$ in $\Om \setminus \ol{\om}$ if and only if $\Om$ is a ball of radius $R=N |\Om| / |\Ga|$ and $u(x)=\frac{|x|^2 - R^2}{2N} $ in $\Om \setminus \ol{\om}$, up to translations (see, e.g., \cite[Lemma 1.9]{Pog}).

\begin{proof}[Proof of Theorem \ref{thm:IdentityGeneral}]
If we set
\begin{equation}\label{eq:defP}
P:= | \na u |^2 - \frac{2}{N} u ,
\end{equation}
a direct computation, valid for any function, informs us that
\begin{equation}\label{P:Ak} \Delta P = 2 | \na^2 u|^2+2<\nabla u,\nabla\Delta u>- \frac{2}{N}\Delta u,\end{equation}
where~$\na^2 u$ denotes the Hessian matrix of~$u$
and~$| \na^2 u|$ denotes its Frobenius norm, that is
$$ (\na^2 u)_{i,j}= \partial^2_{ij}u\qquad
{\mbox{and}}\qquad| \na^2 u|^2=\sum_{i,j=1}^n(\partial^2_{ij}u)^2.$$
Then, using~\eqref{P:Ak}
and the equation in~\eqref{eq:problem},
we conclude that
\begin{equation}\label{eq:CSdeficit}
\De P = 2 \left\lbrace | \na^2 u|^2- \frac{(\De u)^2}{N} \right\rbrace  \quad \mbox{in } \Om \setminus \ol{\om} .
\end{equation}
On the other hand, by~\eqref{eq:problem} and the Green identity,
\begin{equation}\label{eq:1perdim_identity}
\int_{\Om \setminus \ol{\om} }  (-u) \De P \, dx = - \int_{ \Om \setminus \ol{\om} } P \, dx + \int_{\Ga} u_\nu P \, dS_x + \int_{\pa \om} \left\lbrace u_\nu P - u \, P_\nu \right\rbrace dS_x .
\end{equation}
Let us work on the first integral on the right-hand side of \eqref{eq:1perdim_identity}.
For this,
integrating over $ \Om \setminus \ol{\om} $ the differential identity
$$
\dv(u \na u)= u \De u + | \na u|^2 ,
$$
and recalling \eqref{eq:problem}, we get that
$$
\int_{ \Om \setminus \ol{\om} } u \, dx=
\int_{ \Om \setminus \ol{\om} } u\De u \, dx
 = - \int_{ \Om \setminus \ol{\om} } | \na u |^2 \, dx + \int_{ \pa \om } u \, u_\nu \, dS_x .
$$
Thus, in light of~\eqref{eq:defP},
\begin{eqnarray*}
- \int_{ \Om \setminus \ol{\om} } P \, dx& =& 
\int_{ \Om \setminus \ol{\om} } \left\lbrace \frac{2}{N} u - | \na u |^2 \right\rbrace \, dx   
\\&=&
- \frac{N+2}{N} \int_{ \Om \setminus \ol{\om} } | \na u |^2 \, dx + \frac{2}{N} \int_{ \pa \om } u \, u_\nu \, dS_x .
\end{eqnarray*}
Consequently, using \eqref{eq:Pohozaevadhoc},
\begin{equation}\label{eq:3perdim_identity_FIRSTTERM}
\begin{split}
- \int_{ \Om \setminus \ol{\om} } P \, dx \,=\,& - \int_\Ga \frac{ <x , \nu> }{N} u_{\nu}^2 \, dS_x 
\\&\quad-
 2  \int_{\pa \om} \left\lbrace u \, u_\nu - \frac{< x , \nu>}{N} u + 
\frac{ <x, \na u > }{N} u_\nu - \frac{ <x , \nu> }{2N} | \na u |^2 \right\rbrace  dS_x 
\\&\quad+\frac{2}{N} \int_{ \pa \om } u \, u_\nu \, dS_x  .
\end{split}
\end{equation}

Moreover, to deal with the second integral in the right-hand side of \eqref{eq:1perdim_identity}, by recalling~\eqref{eq:problem} and~\eqref{eq:defP}, we have
\begin{equation}\label{eq:4perdim_identity_SECONDTERM}
\int_{\Ga} u_\nu P \, dS_x = \int_{\Ga} u_\nu^3 \, dS_x .
\end{equation}

Furthermore, using \eqref{eq:defP}, we see that
\begin{eqnarray*}
P_\nu = 2 <\na^2 u\na u, \nu> - \frac{2}{N} u_\nu
\end{eqnarray*}
and accordingly
\begin{eqnarray*}
u_\nu P - u \, P_\nu& =&
u_\nu\left\lbrace
| \na u |^2 - \frac{2}{N} u
\right\rbrace
-2u <\na^2 u\na u, \nu> + \frac{2}{N} uu_\nu\\
&=&u_\nu | \na u|^2 - 2 u \, < \na^2 u \na u,\nu>.
\end{eqnarray*}
As a result,
the third integral in the right-hand side of \eqref{eq:1perdim_identity} becomes
\begin{equation}\label{eq:5perdim_identity_THIRDTERM}
\int_{\pa \om} \left\lbrace u_\nu P - u \, P_\nu \right\rbrace dS_x =
\int_{\pa \om} \left\lbrace u_\nu | \na u|^2 - 2 u \, < \na^2 u \na u,\nu> \right\rbrace dS_x .
\end{equation}

Thus, \eqref{eq:FId} follows by putting together~\eqref{eq:CSdeficit}, \eqref{eq:1perdim_identity}, \eqref{eq:3perdim_identity_FIRSTTERM}, \eqref{eq:4perdim_identity_SECONDTERM} and~\eqref{eq:5perdim_identity_THIRDTERM}.
\end{proof}

We now impose the overdetermined condition \eqref{eq:overdetermination},
and we obtain from~\eqref{eq:FId}
the following integral identity:

\begin{cor}
\label{cor:Identitydoposovradeterminazione}
Suppose that~$\Om\setminus\overline\om$
is of class~$C^1$.
If $u \in C^2 (\ol{\Om} \setminus \om)$ satisfies \eqref{eq:problem} and \eqref{eq:overdetermination}, then the following identity holds:
%
%
%
\begin{multline}\label{eq:FIdconsovradeterminazione}
\int_{\Om \setminus \ol{\om} } (-u) 2 \left\lbrace | \na^2 u|^2- \frac{(\De u)^2}{N} \right\rbrace dx 
\\ 
=
c^2 \int_{ \pa \om}   \left( \frac{ < x, \nu>}{N} - u_\nu \right)  dS_x +
\int_{ \pa \om}  2u \left( \frac{< x, \nu >}{N} - u_\nu \right)dS_x
\\
+
\int_{ \pa \om} \left\lbrace u_\nu | \na u |^2 - 2 \frac{< x, \na u >}{N} u_\nu + | \na u|^2 \frac{< x , \nu >}{N} + \frac{2}{N} u u_\nu - 2 <\na^2 u \na u , \nu > u \right\rbrace dS_x .
\end{multline}
\end{cor}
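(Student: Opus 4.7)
The plan is to start from the identity \eqref{eq:FId} of Theorem~\ref{thm:IdentityGeneral} and specialize it using the overdetermined condition \eqref{eq:overdetermination}. All terms on $\partial\omega$ on the right-hand side of \eqref{eq:FId} already match those appearing in \eqref{eq:FIdconsovradeterminazione}, so the only work is to rewrite the single surface integral over $\Gamma$, namely
\[
\int_\Ga u_\nu^2 \left( u_\nu - \frac{\langle x, \nu\rangle}{N} \right) \, dS_x,
\]
as the integral $c^2 \int_{\partial\omega}\left(\frac{\langle x,\nu\rangle}{N}-u_\nu\right)dS_x$ appearing in \eqref{eq:FIdconsovradeterminazione}.

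First, using $u_\nu = c$ on $\Gamma$, the integral above reduces to $c^2\int_\Ga \left(c-\frac{\langle x,\nu\rangle}{N}\right)dS_x$, so it remains to show the ``balance law''
\[
\int_\Ga \left( c - \frac{\langle x, \nu\rangle}{N} \right) dS_x \;=\; \int_{\partial\omega} \left( \frac{\langle x, \nu\rangle}{N} - u_\nu \right) dS_x.
\]
I would obtain this by two applications of the divergence theorem on $\Om\setminus\overline\omega$, whose boundary is $\Gamma\cup\partial\omega$ by \eqref{OR3}, with $\nu$ the exterior normal to $\Om\setminus\overline\omega$ (so $\nu$ points into $\omega$ along $\partial\omega$). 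On the one hand, since $\dv(x/N)=1$,
\[
|\Om\setminus\overline\omega| = \int_\Ga \frac{\langle x,\nu\rangle}{N}\, dS_x + \int_{\partial\omega}\frac{\langle x,\nu\rangle}{N}\, dS_x.
\]
On the other hand, from $\Delta u=1$ in $\Om\setminus\overline\omega$ together with $u_\nu=c$ on $\Gamma$,
\[
|\Om\setminus\overline\omega| = \int_{\Om\setminus\overline\omega}\Delta u\, dx = c\,|\Ga| + \int_{\partial\omega} u_\nu\, dS_x.
\]
Subtracting these two identities yields exactly the desired balance law.

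Substituting back gives the first term on the right-hand side of \eqref{eq:FIdconsovradeterminazione}; combined with the unchanged $\partial\omega$-integrals from \eqref{eq:FId}, this produces \eqref{eq:FIdconsovradeterminazione}. The computation is routine; the only point requiring care is the sign convention, since $\nu$ denotes the outward unit normal to $\Om\setminus\overline\omega$ and therefore points inward on $\partial\omega$ with respect to $\omega$, so the boundary of $\Om\setminus\overline\omega$ contributes both the $\Gamma$- and $\partial\omega$-pieces with the same $\nu$ in the divergence theorem.
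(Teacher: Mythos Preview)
Your proof is correct and follows essentially the same approach as the paper: both start from \eqref{eq:FId}, use $u_\nu=c$ on $\Ga$ to factor out $c^2$, and then establish the balance law by applying the divergence theorem twice on $\Om\setminus\ol\om$ (once to $\De u=1$ and once to $\dv(x/N)=1$) and subtracting. The paper's organization differs only cosmetically, recording the intermediate identity $c|\Ga|=|\Om|-|\om|-\int_{\pa\om}u_\nu\,dS_x$ before combining it with the volume identity.
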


\begin{proof}
We observe that the constant~$c$ in~\eqref{eq:overdetermination}
can be determined explicitly in terms of~$|\Gamma|$,
$|\Omega|$, $|\omega|$ and the values of~$u_\nu$ along~$\partial\omega$.
Indeed, by using \eqref{eq:problem} and \eqref{eq:overdetermination}
together with the divergence theorem,
we deduce that
\begin{equation}\label{eq:value_c}
c |\Ga| = \int_\Ga u_\nu \, dS_x = \int_{ \Om \setminus \ol{\om} } \De u \, dx - \int_{ \pa \om} u_\nu \, dS_x = | \Om| - | \om| - \int_{ \pa \om} u_\nu \, dS_x.
\end{equation}
In particular, we will use that
\begin{equation}\label{eq:value_c2}
\int_\Ga u_\nu \, dS_x = | \Om| - | \om| - \int_{ \pa \om} u_\nu \, dS_x.
\end{equation}
On the other hand, by applying again
the divergence theorem,
\begin{eqnarray*}
| \Om | - | \om |&=&\int_{\Om\setminus\om} 1\,dx\\
&=& \int_{\Om\setminus\om}\frac{\dv x}N\,dx\\
&=& \int_{ \Ga} \frac{<x,\nu>}{N} \, dS_x+
\int_{\pa \om} \frac{ <x, \nu> }{N} \, dS_x.
\end{eqnarray*}
{F}rom this and~\eqref{eq:value_c2}, we conclude that
\begin{eqnarray*}&&
\int_\Ga u_\nu^2 \left( u_\nu - \frac{ < x, \nu>}{N} \right) \, dS_x
=c^2\int_\Ga  \left\{ u_\nu - \frac{ < x, \nu>}{N} \right\}\, dS_x\\
&&\qquad=
c^2\left(
| \Om| - | \om| - \int_{ \pa \om} u_\nu \, dS_x-
\int_\Ga \frac{ < x, \nu>}{N} \, dS_x
\right)\\&&\qquad=
c^2\left(- \int_{ \pa \om} u_\nu \, dS_x+ \int_{\pa \om} \frac{ <x, \nu> }{N} \, dS_x\right).
\end{eqnarray*}
Plugging this information into~\eqref{eq:FId} we obtain the desired result
in~\eqref{eq:FIdconsovradeterminazione}.
\end{proof}

\section{Some estimates on a spherical pseudo-distance}\label{sec:Some estimates}

In this section, we will obtain a suitable bound on the
following pseudo-distance
\begin{equation}\label{eq:pseudodistance}
\int_{\Ga} \left| \frac{|x-z|}{N} - c \right|^2 dS_x ,
\end{equation}
for a suitable~$z\in \RR^N$.

We point out that the quantity in~\eqref{eq:pseudodistance}
plays the role of an ``integral distance'' of $\Ga$ from the
sphere centered at a point~$z\in\Omega$ of radius $Nc$:
indeed,
when~$\Ga=\partial B_{Nc}(z)$, the quantity in~\eqref{eq:pseudodistance}
vanishes, and, in general, this quantity can be considered an~$L^2$-measure
on how far points on~$\Gamma$ are from points on~$\partial B_{Nc}(z)$.
\medskip

We also notice that the pseudo-distance in \eqref{eq:pseudodistance} can be put in relation with the following {\it asymmetry}:
\begin{equation}\label{eq:asymmetryallaFraenkel}
\frac{| \Om \De B_{Nc}(z) |}{| B_{Nc}(z) |},
\end{equation}
where $\Om \De B_{Nc}(z)$ denotes the symmetric difference of $\Om$ and the ball $B_{Nc}(z)$ of radius $Nc$ centered at $z$.
%
%

In particular, the asymmetry in \eqref{eq:asymmetryallaFraenkel} is bounded from above by the pseudo-distance in~\eqref{eq:pseudodistance}, as stated in the following
result:

\begin{lem}\label{lem:relationasymmetrypseudodistance}
Let $\Om \subset \RR^N$ be a bounded domain with Lipschitz boundary $\Ga$, satisfying the uniform interior sphere condition with radius $r_i$. Then, there exists a positive constant $C$, only depending on $N$, $r_i$ and~$c$, such that
\begin{equation*}
\frac{| \Om \De B_{Nc}(z) |}{| B_{Nc}(z) |} \le C \left[ \int_{\Ga} \left| \frac{|x-z|}{N} - c \right|^2 dS_x \right]^{\frac{1}{2}} .
\end{equation*}
\end{lem}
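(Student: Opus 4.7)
My plan is to set $R:=Nc$ and decompose
\begin{equation*}
|\Omega \triangle B_R(z)| = |\Omega \setminus B_R(z)| + |B_R(z) \setminus \Omega|,
\end{equation*}
then bound each summand by a boundary integral over $\Gamma$ of $\bigl||x-z|-R\bigr|$ (that is, an $L^1(\Gamma)$ bound), before invoking Cauchy--Schwarz to upgrade to the $L^2(\Gamma)$ pseudo-distance appearing in the statement.

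First I would apply the coarea formula to the $1$-Lipschitz function $\rho(x):=|x-z|$ to write
\begin{equation*}
|\Omega\setminus B_R(z)|=\int_R^{+\infty}\mathcal{H}^{N-1}\bigl(\Omega\cap\partial B_r(z)\bigr)\,dr,
\end{equation*}
and the analogous formula $|B_R(z)\setminus\Omega|=\int_0^{R}\mathcal{H}^{N-1}\bigl(\partial B_r(z)\setminus\Omega\bigr)\,dr$. The central geometric step is to prove the pair of inequalities
\begin{equation*}
\mathcal{H}^{N-1}\bigl(\Omega\cap\partial B_r(z)\bigr)\le C(N,r_i)\,\mathcal{H}^{N-1}\bigl(\Gamma\setminus\overline{B_r(z)}\bigr)\qquad\text{for }r>R,
\end{equation*}
\begin{equation*}
\mathcal{H}^{N-1}\bigl(\partial B_r(z)\setminus\Omega\bigr)\le C(N,r_i)\,\mathcal{H}^{N-1}\bigl(\Gamma\cap B_r(z)\bigr)\qquad\text{for }r<R.
\end{equation*}
I expect both to follow from relative-isoperimetric-type reasoning: the uniform interior sphere condition of radius $r_i$ forbids $\Omega$ from developing arbitrarily thin necks crossing $\partial B_r(z)$, so any sizeable spherical slice of $\Omega$ (or its complement) must be accompanied by a comparable piece of $\Gamma$ on the appropriate side of $B_r(z)$.

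Granting these estimates, integration in $r$ combined with Fubini yields
\begin{equation*}
|\Omega\setminus B_R(z)|\le C(N,r_i)\int_\Gamma(|x-z|-R)_+\,dS_x,\qquad |B_R(z)\setminus\Omega|\le C(N,r_i)\int_\Gamma(R-|x-z|)_+\,dS_x,
\end{equation*}
whose sum bounds $|\Omega\triangle B_R(z)|$ by $C(N,r_i)\int_\Gamma\bigl||x-z|-R\bigr|\,dS_x$. A Cauchy--Schwarz application then gives
\begin{equation*}
\int_\Gamma\bigl||x-z|-R\bigr|\,dS_x\le N\,|\Gamma|^{1/2}\left(\int_\Gamma\left|\frac{|x-z|}{N}-c\right|^2\,dS_x\right)^{1/2},
\end{equation*}
and dividing through by $|B_R(z)|=\omega_N(Nc)^N$ will conclude the argument, provided the residual factor $|\Gamma|^{1/2}/|B_R(z)|$ can be absorbed into a constant depending only on $N$, $r_i$, $c$.

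The hard part will be the geometric isoperimetric-type step: the constant must depend only on $N$ and $r_i$, and so the interior sphere condition has to be leveraged quantitatively (for instance by covering $\Omega\cap\partial B_r(z)$ by radial tubes of thickness controlled by $r_i$ and relating their footprints to pieces of $\Gamma$ outside $B_r$). A secondary difficulty is the control of the factor $|\Gamma|^{1/2}/|B_R(z)|$: I expect this to be handled by using the interior sphere condition to bound $|\Gamma|$ in terms of $|\Omega|$ via a tubular argument, and then bounding $|\Omega|$ in terms of the pseudo-distance through the divergence identity $|\Omega|=\int_\Gamma \langle x-z,\nu\rangle/N\,dS_x$, which differs from $|B_R(z)|$ precisely by a first moment controlled by the pseudo-distance.
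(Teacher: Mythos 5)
Your proposal takes a genuinely different route from the paper: rather than citing an external lemma, you attempt a from-scratch coarea/slicing argument. The paper's proof is a two-line verification that the hypotheses of \cite[Lemma 11]{Fe} hold, with the choices $K:=\max\{Nc/r_i,\,(d_\Om/(2Nc))^N\}$ and $r:=Nc$; all the geometric work is delegated to that reference.

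The central step of your argument, however, is false. Your pointwise slicing inequality
\begin{equation*}
\mathcal{H}^{N-1}\bigl(\partial B_r(z)\setminus\Omega\bigr)\le C(N,r_i)\,\mathcal{H}^{N-1}\bigl(\Gamma\cap B_r(z)\bigr)\qquad(r<R)
\end{equation*}
fails whenever $B_r(z)$ lies entirely in the complement of $\Omega$ with no boundary inside: then the left side is the full sphere $N\omega_N r^{N-1}$ and the right side is zero. This happens whenever $z\notin\Omega$, and also when $z$ sits inside a hole of $\Omega$. Take $\Omega=B_{n+1}(0)\setminus\overline{B_n(0)}$, $z=0$, $r_i=\tfrac12$: for every $r<n$ the left side is positive and the right side vanishes. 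Your heuristic that the interior sphere condition forbids thin necks is not what is at stake here; there is no neck, and the interior sphere condition is indifferent to a hole of arbitrary radius. Worse, even the integrated $L^1(\Gamma)$ bound that you would deduce by Fubini cannot hold with a constant depending only on $N,r_i$: in the same annulus example with $R=n+\tfrac12$ one finds $|B_R(z)\setminus\Omega|=\omega_N n^N$ while $\int_\Gamma(R-|x-z|)_+\,dS_x=\tfrac12\,N\omega_N n^{N-1}$, forcing $C\gtrsim n$. So a diameter (or volume-ratio) dependence is unavoidable, and this is precisely what the parameter $K$ in Feldman's lemma encodes. (The lemma statement in the paper omits $d_\Om$ from the list of dependencies, but the proof via $K$ does use it.) Finally, your handling of the residual factor $|\Gamma|^{1/2}/|B_{Nc}(z)|$ is circular as sketched: bounding $|\Omega|$ by the pseudo-distance via the divergence identity reintroduces $|\Gamma|$ through the $R\,|\Gamma|$ term, so the loop does not close without the very diameter-type information you hoped to avoid.
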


\begin{proof}
The desired result follows by applying \cite[Lemma 11]{Fe} with 
$$K:=\max \left\lbrace \frac{Nc}{r_i} , \, \left( \frac{d_\Om}{2 Nc} \right)^N \right\rbrace \quad \text{ and } \quad r:=Nc .$$ 
Notice that \cite[Lemma 11]{Fe} can be applied with these choices for $K$ and $r$ because the following two relations hold true:
the first is
\begin{equation}\label{eq:stima_asimmetria_volume_riserve}
K |B_{Nc}| \ge \left( \frac{d_\Om}{2 Nc} \right)^N |B_1| (Nc)^N \ge | \Om |,
\end{equation}
where in the last inequality we used that $|B_1| \left( \frac{d_\Om}{2} \right)^N \ge | \Om |$;
the second is
\begin{equation}\label{5.3bis}
K r_{in}(\Om) \ge Nc \, \frac{r_{in} (\Om)}{r_i} \ge Nc ,
\end{equation}
where $r_{in} (\Om):= \max_{\ol{\Om}} \de_\Ga (x)$ denotes the {\it inradius} of $\Om$ and in the last inequality we used that, by definition, $r_{in} (\Om) \ge r_i$.
\end{proof}

To obtain our bounds for the pseudo-distance introduced in~\eqref{eq:pseudodistance}, we recall the notation in~\eqref{DIST}
and we detect an optimal growth of the solution from
the boundary, by adapting an idea from~\cite[Lemma 3.1]{MP2}:

\begin{lem}
\label{lem:relationdist}
Let $u$ satisfy~\eqref{eq:problem}. Assume that~$u \le 0$ on $\pa \om$.
Then,
\begin{equation}\label{instgr}
-u(x) \ge 
\frac{1}{2 N }\,\de(x)^2 
\quad \mbox{ for every } \ x \in \Om \setminus \ol{\om}.
\end{equation}

Moreover, if $\Om \setminus \ol{\om}$ is of class~$C^1$
and satisfies the uniform interior sphere condition with radius $r_i$, that is \eqref{A4}, then it holds that
\begin{equation}
\label{eq:relationdist}
-u(x) \ge \frac{r_i}{2 N}\,\de (x)  \quad \mbox{ for every } \ x \in  \Om \setminus \ol{\om} .
\end{equation}
\end{lem}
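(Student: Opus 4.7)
The plan is to obtain both bounds by comparing $u$ with explicit quadratic barriers on interior balls. The preliminary observation underlying everything is that, since $\Delta u = 1 > 0$, the function $u$ is subharmonic in $\Om \setminus \ol{\om}$; as $u = 0$ on $\Ga$ and $u \le 0$ on $\pa \om$, the weak maximum principle (which only needs $u \in C^2$ inside and continuity up to the boundary, both guaranteed by \eqref{assumption:regularityuptobordino}) yields $u \le 0$ throughout $\Om \setminus \ol{\om}$.

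For the quadratic estimate \eqref{instgr}, fix $x_0 \in \Om \setminus \ol{\om}$ and set $r := \de(x_0)$. By definition of $\de$, the open ball $B_r(x_0)$ lies in $\Om \setminus \ol{\om}$. Introduce the auxiliary function
\begin{equation*}
w(x) := u(x) + \frac{1}{2N}\bigl(r^2 - |x - x_0|^2\bigr),
\end{equation*}
which satisfies $\Delta w = 1 - 1 = 0$ in $B_r(x_0)$ and $w = u \le 0$ on $\pa B_r(x_0)$. The maximum principle for harmonic functions then forces $w(x_0) \le 0$, i.e. $u(x_0) \le -\frac{r^2}{2N}$, which is exactly \eqref{instgr}.

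For the linear estimate \eqref{eq:relationdist} we split according to the size of $\de(x_0)$. If $\de(x_0) \ge r_i$, then \eqref{instgr} already gives $-u(x_0) \ge \de(x_0)^2/(2N) \ge r_i \de(x_0)/(2N)$. Suppose instead $\de(x_0) < r_i$ and pick a point $p \in \Ga \cup \pa \om$ that realizes the distance $\de(x_0) = |x_0 - p|$. By \eqref{A4} there exists a ball $B_{r_i}(y)$ contained in $\Om \setminus \ol{\om}$ whose closure touches $\Ga \cup \pa \om$ only at $p$; in particular $(y-p)/r_i$ is the interior unit normal to $\Ga \cup \pa \om$ at $p$. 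Since $\Om \setminus \ol{\om}$ is of class $C^1$, this normal direction is unique, and since $x_0 - p$ is a proximal normal pointing into $\Om \setminus \ol{\om}$, it must be parallel to $y - p$; hence $x_0 = p + \de(x_0)(y-p)/r_i$, and consequently $|x_0 - y| = r_i - \de(x_0)$, which in particular places $x_0$ inside $B_{r_i}(y)$.

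Now compare $u$ with the barrier $\varphi(x) := \frac{1}{2N}\bigl(|x-y|^2 - r_i^2\bigr)$. On $B_{r_i}(y)$ one has $\Delta(u - \varphi) = 0$, while on $\pa B_{r_i}(y)$ we have $\varphi = 0$ and $u \le 0$, so $u - \varphi \le 0$ on the boundary. The maximum principle yields $u \le \varphi$ throughout $B_{r_i}(y)$, and evaluating at $x_0$ gives
\begin{equation*}
-u(x_0) \ge -\varphi(x_0) = \frac{r_i^2 - (r_i - \de(x_0))^2}{2N} = \frac{\de(x_0)\bigl(2 r_i - \de(x_0)\bigr)}{2N} \ge \frac{r_i\, \de(x_0)}{2N},
\end{equation*}
since $\de(x_0) \le r_i$. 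The only nontrivial step is the collinearity of $x_0$, $p$ and $y$, which is the obstacle one must dispatch carefully and which is precisely where the $C^1$ regularity together with the uniform interior sphere condition is used.
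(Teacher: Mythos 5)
Your proof is correct and follows essentially the same strategy as the paper: compare $u$ on interior balls with the explicit torsion function (equivalently, observe that the difference is harmonic and apply the maximum principle), and split the linear bound into the cases $\de(x_0)\ge r_i$ and $\de(x_0)<r_i$. The one place you go beyond the paper's exposition is in spelling out why $x_0$, the nearest boundary point $p$, and the center $y$ of the interior touching ball are collinear (using the uniqueness of the $C^1$ normal), which the paper merely asserts by writing $r_i-|x|=\de(x)$ after placing the center at the origin; this is a useful clarification rather than a different method.
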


\begin{proof}
Let~$x \in \Om\setminus \ol{\om}$ and set~$r:=\de (x)$.
We consider $$
w(y):=\frac{|y-x|^2-r^2}{2 N}.$$
We remark that~$w$ is the solution of the classical torsion problem in $B_r(x)$, namely
\begin{equation}\label{eq:torsionball}
\begin{cases}\De w = 1 \qquad\text{  in } B_r(x), \\ w =0 \qquad\text{  on } \pa B_r(x).\end{cases}
\end{equation}
By comparison we have that $w\ge u$ on $\ol{B}_r(x)$.
In particular, 
$$
- \frac{1}{2 N}\,\de(x)^2 = w(x) \ge u(x) ,
$$ 
and \eqref{instgr} follows.

We point out that~\eqref{eq:relationdist} follows
from~\eqref{instgr} if~$\de (x)\ge r_i$.
Hence, from now on, we can suppose that
\begin{equation}\label{eq62347on}\de (x) < r_i.\end{equation}
Let $\bar x$ be the closest point in $\Ga \cup \pa \om $ to $x$ and call $\tilde B \subset \Om \setminus \ol{\om}$
the ball of radius $r_i$ touching~$\Ga \cup \pa \om $ at~$\bar x$ and containing $x$.
Up to a translation, we can always suppose that 
\begin{equation}\label{8686s2}
{\mbox{the center of the ball $\tilde B$ is the origin.}}\end{equation}
%
%
Now, we let~$\tilde w$ 
be the solution of \eqref{eq:torsionball} in $\tilde B$, that is $\tilde w(y):=\left(|y|^2- r_i^2 \right)/(2N)$.
By comparison, we have that $w \ge u$ in~$\tilde B$, and hence, being $x\in \tilde B$,
\begin{equation}\label{817928y3r}
-u(x) \ge \frac{1}{2 N}\,(r_i^2 - |x|^2 )=
\frac{1}{2 N}\,( r_i + |x| )(r_i-|x|)\ge\frac{ r_i }{2 N} \,(r_i-|x|).\end{equation}
Moreover, from~\eqref{8686s2},
$$r_i-|x|=\de (x).$$
This and~\eqref{817928y3r}
give~\eqref{eq:relationdist}, as desired.
\end{proof}

We recall now some
Hardy-Poincar\'{e}-type inequalities that have been proved in~\cite[Section 3.2]{Pog2} by exploiting the works of
Hurri-Syrj\"anen~\cites{Hu, HS}. 
%
%
In what follows, for a set~$D$ and a function~$v: D \to \RR$, $v_D$ denotes the mean value of $v$ in $D$, that is
\begin{equation}\label{media}
v_D:= \frac{1}{|D|} \, \int_D v \, dx.
\end{equation}
Also, for a function $v:D \to \RR$ we define by~$\nr v \nr_{p, D}$
its $L^p$-norm in~$D$, that is
\begin{equation}\label{ellep}
\nr v \nr_{p, D}:=\left(\int_D|v(x)|^p\,dx \right)^{1/p},
\end{equation}
and
$$
\nr \de^\al \, \na v \nr_{p, D } := \left( \sum_{i=1}^N \nr \de^\al \,  v_i \nr_{p, D }^p \right)^\frac{1}{p} \quad \mbox{and} \quad
\nr \de^\al \, \na^2 v \nr_{p,D} := \left( \sum_{i,j=1}^N \nr \de^\al \, v_{ij} \nr_{p, D}^p \right)^\frac{1}{p},
$$
for $0 \le \al \le 1$ and $p \in [1, \infty)$. Here and whenever no confusion is possible, we will use the abbreviated notation
$$
\de (x) = \dist(x, \pa D) ,
$$
that agrees with \eqref{DIST} when $D= \Om \setminus \ol{ \om }$. 

\begin{lem}\label{lem:John-two-inequalities}
Let $D \subset \RR^N$ be a bounded domain satisfying the uniform interior
sphere condition with radius $r_i$, and consider three real
numbers $r$, $p$ and~$ \al$ such that either
\begin{equation}\label{eq:conditionHS}
1 \le p \le r \le \frac{Np}{N-p(1 - \al )} , \qquad p(1 - \al)<N \quad{\mbox{ and }}
\quad 0 \le \al \le 1 ,
\end{equation}
or
\begin{equation}\label{eq:conditionBS}
r = p \in \left[ 1, \infty \right)  \quad{\mbox{ and }}\quad \al=0.
\end{equation}
Then,

(i) given~$x_0 \in D$, there exists a positive constant $ \mu_{r,p, \al} ( D , x_0 ) $
such that
\begin{equation}
\label{John-harmonic-quasi-poincare}
\nr v \nr_{r, D } \le \mu_{r, p, \al} ( D , x_0 )^{-1} \nr \de^{\al} \, \na v  \nr_{p, D },
\end{equation}
for every function $v$ which is harmonic in $D$ and such that $v(x_0)=0$;
\par
(ii) there exists a positive constant $\ol{\mu}_{r, p, \al} (D)$ such that
\begin{equation}
\label{John-harmonic-poincare}
\nr v - v_{D} \nr_{r,D} \le \ol{\mu}_{r, p, \al} (D)^{-1} \nr \de^{\al} \, \na v  \nr_{p, D },
\end{equation}
for every function $v$ which is harmonic in $D$.

Furthermore, the following explicit bounds hold.
Recalling the notation in~\eqref{DIAMET},
when $r$, $p$ and~$ \al$ are as in \eqref{eq:conditionHS}, we have that
\begin{equation}\label{eq:estimatePoincaremean}
\ol{\mu}_{r, p, \al} (D)^{-1} \le k_{N,\, r, \, p,\, \al} \, \left( \frac{d_D}{r_i} \right)^N |D|^{\frac{1-\al}{N} +\frac{1}{r} +\frac{1}{p} }
\end{equation}
and
\begin{equation}\label{eq:estimatePoincarex_0}
\mu_{r,p,\al}(D, x_0 )^{-1} \le k_{N,r,p,\al} \, \left( \frac{d_D}{\min[r_i, 
\de (x_0)] } \right)^N |D|^{\frac{1-\al}{N} +\frac{1}{r} +\frac{1}{p} },
\end{equation}
for some positive constant~$k_{N,r,p,\al}$.
When instead $r$, $p$ and~$ \al$ are as in \eqref{eq:conditionBS}, we have that
\begin{equation}\label{eq:estimate_mupp_meanPoincare}
\ol{\mu}_{p,p,0} (D)^{-1} \le k_{N, \, p} \,  \frac{d_D^{3N(1 + \frac{N}{p}) + 1 }  }{
r_i^{3N(1 + \frac{N}{p})}  }
\end{equation}
and
\begin{equation}\label{eq:estimate_mupp_Poincarex_0}
\mu_{p,p, 0}(D, x_0 )^{-1} \le k_{N, \, p} \, \frac{d_D^{3N(1 + \frac{N}{p}) + 1 }  }{
\min[r_i, \de ( x_0 )]^{3N(1 + \frac{N}{p})} },
\end{equation}
for some positive constant~$k_{N, \, p}$.
\end{lem}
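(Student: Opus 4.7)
\medskip

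\noindent\textbf{Proof plan for Lemma \ref{lem:John-two-inequalities}.}

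The strategy is to reduce the estimates to known weighted Poincar\'e inequalities on John domains, and then ``transfer'' the mean-value centering to a pointwise centering at $x_0$ using harmonicity. The crucial preliminary observation is that, under the uniform interior sphere condition with radius $r_i$, the domain $D$ is a $b_0$-John domain with $b_0 \le d_D/r_i$; this is exactly the fact recorded at \cite[(iii) of Remark 3.12]{Pog2}, and it allows us to plug in the Hurri-Syrj\"anen machinery of \cites{Hu, HS}.

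First, I would establish part (ii). In the regime \eqref{eq:conditionHS} the weighted Hurri-Syrj\"anen Poincar\'e inequality on $b_0$-John domains applies directly to \emph{any} function (no harmonicity needed) and yields \eqref{John-harmonic-poincare}; the explicit bound \eqref{eq:estimatePoincaremean} comes from tracking the dependence on $b_0$, combined with the trivial volume normalization and the bound $b_0 \le d_D/r_i$. The degenerate regime \eqref{eq:conditionBS} ($r=p$, $\alpha=0$) is the unweighted $L^p$ Poincar\'e inequality on John domains; here one cannot invoke the direct Hurri-Syrj\"anen estimate and must instead use a Whitney/chain decomposition and iterate the local Poincar\'e estimates over the chain. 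This bootstrapping is what produces the inflated exponent $3N(1+N/p)+1$ in \eqref{eq:estimate_mupp_meanPoincare}.

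Second, I would turn to part (i), where harmonicity enters essentially. I write
\[
\|v\|_{r,D} \le \|v-v_D\|_{r,D} + |D|^{1/r}\,|v_D|,
\]
so that by (ii) it suffices to bound $|v_D|$ by the right-hand side of \eqref{John-harmonic-quasi-poincare}. Since $v$ is harmonic and $v(x_0)=0$, I would pick a ball $B\subset D$ centered at $x_0$ of radius $\min[r_i,\delta(x_0)]$ (which fits inside $D$ thanks to the uniform interior sphere property), and use the mean value property together with a John chain from $x_0$ to points of $D$ in order to compare $v_D$ with $v(x_0)=0$. The John chain gives
\[
|v_D| \le |v_D - v_B| + |v_B - v(x_0)| \le C\, \|\delta^\alpha \nabla v\|_{p,D},
\]
with $C$ controlled by $d_D$, $r_i$ and (through the starting ball) $\min[r_i,\delta(x_0)]$; this yields the constants \eqref{eq:estimatePoincarex_0} and \eqref{eq:estimate_mupp_Poincarex_0}, whose worsened dependence on $\min[r_i,\delta(x_0)]$ (as opposed to $r_i$ alone) reflects exactly the cost of traversing the John chain from the point $x_0$.

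The main obstacle is the bookkeeping of the explicit constants. In the regime \eqref{eq:conditionHS} the Hurri-Syrj\"anen inequality is essentially off-the-shelf, but in the degenerate regime \eqref{eq:conditionBS} one has to reprove the weighted Poincar\'e inequality by a Whitney-chain argument on the John domain, taking care to keep the dependence on $b_0$, $d_D$, $r_i$ polynomial so that the resulting exponent in \eqref{eq:estimate_mupp_meanPoincare}--\eqref{eq:estimate_mupp_Poincarex_0} can be made explicit. Once these bounds are in hand, the harmonicity argument for (i) is a routine application of the mean value theorem along the John chain.
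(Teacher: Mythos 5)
The paper does not actually prove this lemma: immediately after the statement it says ``Lemma~\ref{lem:John-two-inequalities} follows from \cite[item (i) of Lemma 2.1 and items (i) and (ii) of Remark 2.4]{MP3},'' and that citation constitutes the entire proof. Your plan reconstructs the argument that lies behind the cited reference rather than the paper's (one-line) argument, so strictly speaking it takes a different (more self-contained) route, and for the most part it captures the right ingredients: the reduction from the interior sphere condition to the John condition with $b_0 \le d_D/r_i$, the Hurri--Syrj\"anen weighted Poincar\'e inequality in the regime \eqref{eq:conditionHS}, a chain argument for the degenerate regime \eqref{eq:conditionBS}, and the use of the mean-value property of harmonic functions to pass from the mean-centered inequality to the pointwise-centered one.

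There is, however, a quantitative gap in your passage from (ii) to (i). Your decomposition
$\nr v \nr_{r,D} \le \nr v - v_D \nr_{r,D} + |D|^{1/r}|v_D|$, combined with $|v_D| = |v_D - v_B|$ (the mean value property makes $v_B = v(x_0)=0$, so no chain is actually needed there) and H\"older's inequality $|v_D - v_B| \le |B|^{-1/r}\nr v - v_D\nr_{r,D}$, yields
$\mu_{r,p,\al}(D,x_0)^{-1} \le \bigl(1 + (|D|/|B|)^{1/r}\bigr)\,\ol\mu_{r,p,\al}(D)^{-1}$.
With $|B| \sim \min[r_i,\de(x_0)]^N$, this produces a factor of order $(d_D/\min[r_i,\de(x_0)])^{N(1+1/r)}$ rather than the $(d_D/\min[r_i,\de(x_0)])^N$ claimed in \eqref{eq:estimatePoincarex_0}. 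To recover the stated exponent one needs the sharper argument in \cite{MP3}, which in effect replaces $r_i$ by $\min[r_i,\de(x_0)]$ directly inside the John constant rather than paying twice (once in $\ol\mu^{-1}$ through $r_i$ and once through the ball $B$). Your scheme is still sufficient to establish a bound of the correct \emph{form} and would suffice for the qualitative parts of the paper, but it does not literally reproduce the stated constants; if explicit exponents matter downstream (as they do for the values of $\tau_N$), this discrepancy would have to be tracked and corrected.
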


Lemma~\ref{lem:John-two-inequalities}
follows from~\cite[item(i) of Lemma 2.1 and items~(i) and~(ii) of Remark~2.4]{MP3}.

{F}rom Lemma~\ref{lem:John-two-inequalities} we can derive estimates for
the derivatives of harmonic functions, as stated in the next result
(a proof of this can be found in~\cite[Corollary 2.3]{MP3}).

\begin{cor}\label{cor:JohnPoincareaigradienti}
Let $D \subset \RR^N$ be a bounded domain satisfying the uniform interior sphere condition with radius $r_i$, and let $v$ be a harmonic function in $D$. Consider three real
numbers~$r$, $p$ and~$ \al$ 
satisfying either~\eqref{eq:conditionHS} or~\eqref{eq:conditionBS}.

(i) If $x_0$ is a critical point of $v$ in $D$, then it holds that
\begin{equation*}
\nr \na v \nr_{r, D } \le \mu_{r, p, \al} ( D , x_0 )^{-1} \nr \de^{\al} \, \na^2 v  \nr_{p, D }.
\end{equation*}

(ii) If
$$\int_{D } \na v(x) \, dx = 0,$$
then it holds that
\begin{equation*}
\nr \na v \nr_{r, D } \le \ol{\mu}_{r, p, \al} (D)^{-1} \nr \de^{\al} \, \na^2 v  \nr_{p,D }.
\end{equation*}
\end{cor}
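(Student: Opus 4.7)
The natural strategy is to apply Lemma \ref{lem:John-two-inequalities} componentwise to the partial derivatives of $v$. Since $v$ is harmonic in $D$, each $v_i := \partial_i v$ is itself harmonic in $D$, so Lemma \ref{lem:John-two-inequalities} can be invoked with $v$ replaced by $v_i$. The passage from ``norms of $v_i$'' to ``norms of $\na v$'' (and from ``norms of $\na v_i$'' to ``norms of $\na^2 v$'') will then be a purely algebraic manipulation involving the definitions of $\nr \na v \nr_{r,D}$ and $\nr \de^{\al}\na^2 v \nr_{p,D}$.

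For part (i), the hypothesis that $x_0$ is a critical point of $v$ translates to $v_i(x_0)=0$ for every $i=1,\ldots,N$. Hence inequality \eqref{John-harmonic-quasi-poincare} applies to every~$v_i$, yielding
\[
\nr v_i \nr_{r,D} \;\le\; \mu_{r,p,\al}(D,x_0)^{-1}\,\nr \de^{\al}\na v_i\nr_{p,D}\qquad\text{for each } i.
\]
For part (ii), the assumption $\int_D \na v\,dx=0$ is exactly $(v_i)_D=0$ for every $i$, so \eqref{John-harmonic-poincare} applied to $v-v_D$ replaced by $v_i$ gives the analogous estimate with $\ol{\mu}_{r,p,\al}(D)^{-1}$ in place of $\mu_{r,p,\al}(D,x_0)^{-1}$.

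To finish, I would raise each of these inequalities to the power $r$ and sum over $i$, obtaining
\[
\nr \na v\nr_{r,D}^{r} \;=\; \sum_{i=1}^{N}\nr v_i\nr_{r,D}^{r}
\;\le\; C^{-r}\sum_{i=1}^{N}\nr \de^{\al}\na v_i\nr_{p,D}^{r},
\]
with $C$ being either $\mu_{r,p,\al}(D,x_0)$ or $\ol{\mu}_{r,p,\al}(D)$. Under either \eqref{eq:conditionHS} or \eqref{eq:conditionBS} we have $r\ge p$, so the finite-dimensional $\ell^{p}\hookrightarrow\ell^{r}$ embedding gives
\[
\Bigl(\sum_{i=1}^{N}\nr \de^{\al}\na v_i\nr_{p,D}^{r}\Bigr)^{1/r}
\;\le\;
\Bigl(\sum_{i=1}^{N}\nr \de^{\al}\na v_i\nr_{p,D}^{p}\Bigr)^{1/p}
\;=\;\nr \de^{\al}\na^{2}v\nr_{p,D},
\]
where in the last equality I used $\nr \de^{\al}\na v_i\nr_{p,D}^{p}=\sum_{j}\nr\de^{\al}v_{ij}\nr_{p,D}^{p}$ together with the definition of $\nr \de^{\al}\na^{2}v\nr_{p,D}$. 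Taking $r$-th roots yields the two claimed inequalities.

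There is no substantial obstacle here; the argument is really an immediate corollary of Lemma \ref{lem:John-two-inequalities}. The only point that requires a moment of care is the bookkeeping between the exponent $r$ used in the componentwise norm $\nr \na v\nr_{r,D}$ and the exponent $p$ appearing inside $\nr \de^{\al}\na^{2}v\nr_{p,D}$; the inclusion $\ell^{p}\hookrightarrow\ell^{r}$ for $r\ge p$ (on the finite index set $\{1,\dots,N\}$) bridges this gap, and both admissible ranges of exponents in \eqref{eq:conditionHS} and \eqref{eq:conditionBS} ensure $r\ge p$.
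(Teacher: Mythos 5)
Your argument is correct and is the standard proof of this corollary: since $v$ is harmonic each $v_i=\partial_i v$ is harmonic, part (i)'s hypothesis means $v_i(x_0)=0$ for every $i$ and part (ii)'s means $(v_i)_D=0$ for every $i$, so \eqref{John-harmonic-quasi-poincare} or \eqref{John-harmonic-poincare} applies componentwise; the vector norms in the statement are defined precisely so that the combination goes through once one uses $\ell^p\hookrightarrow\ell^r$ on $\{1,\dots,N\}$, and both \eqref{eq:conditionHS} and \eqref{eq:conditionBS} do guarantee $r\ge p$. The paper itself does not spell out a proof but delegates to~\cite[Corollary 2.3]{MP3}, where the argument is the same componentwise reduction you give.
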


\begin{rem}\label{rem:poincare valgono anche per John}
{\rm
For later use, we mention that Lemma \ref{lem:John-two-inequalities} and Corollary \ref{cor:JohnPoincareaigradienti} hold true more in general if the assumption of the uniform interior sphere condition is dropped and replaced by the assumption that $D$ is a John domain (see \cite[Section 3.2]{Pog2} or \cite[Lemma 2.1 and Corollary 2.3]{MP3}): in this case explicit estimates of the relevant constants now depending on the John parameter can be found in \cite[Remark 2.4]{MP3}. 
}
\end{rem}

With the aid of Corollary~\ref{cor:JohnPoincareaigradienti} we now prove
the following lemma, which, together with the forthcoming
Theorem~\ref{thm:stabestimateforpsudodistance}, leads to a stability estimate
in terms of the pseudo-distance introduced in~\eqref{eq:pseudodistance}.

\begin{lem}
\label{lem:genericv-trace inequality}
Let $\Om \setminus \ol{\om} \subset \RR^N$ be a bounded domain
of class~$C^1$
satisfying the uniform interior sphere condition with radius $r_i$, that is \eqref{A4},
and let $v \in C^2 (\ol{\Om} \setminus \om)$ be a harmonic function in $\Om \setminus \ol{\om}$.
Let~$u \in C^1 (\ol{\Om} \setminus \om)$ satisfy~\eqref{eq:problem} and assume that~$u \le 0$ on~$\pa \om$.

(i) If $x_0$ is a critical point of $v$ in $\Om$, then it holds that
\begin{eqnarray*}&&
\int_{\Ga} |\na v|^2 dS_x \le \frac{2  N}{r_i} \left(1+\frac{N}{r_i\, \mu_{2,2, \frac{1}{2} }(\Om \setminus \ol{\om} , x_0 )^2 } \right)  \int_{\Om \setminus \ol{\om} } (-u) |\na^2 v|^2 dx 
\\&&\qquad\qquad\qquad\qquad-
\frac{N}{r_i} \int_{\pa \om} \left\lbrace |\na v|^2 u_{\nu} - 2 u < \na^2 v \na v , \nu > \right\rbrace \, dS_x.
\end{eqnarray*}

(ii) If 
$$\int_{\Om \setminus \ol{\om} } \na v \, dx = 0,$$ 
then it holds that
\begin{eqnarray*}&&
\int_{\Ga} |\na v|^2 dS_x \le \frac{2  N}{r_i} \left(1+\frac{N}{r_i\, \ol{\mu}_{2,2, \frac{1}{2} }(\Om \setminus \ol{\om} )^2 } \right)  \int_{\Om \setminus \ol{\om} } (-u) |\na^2 v|^2 dx 
\\&&\qquad\qquad\qquad\qquad-
\frac{N}{r_i} \int_{\pa \om} \left\lbrace |\na v|^2 u_{\nu} - 2 u < \na^2 v \na v , \nu > \right\rbrace \, dS_x.
\end{eqnarray*}
\end{lem}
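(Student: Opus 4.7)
I would derive both inequalities from a single integral identity obtained by applying Green's second identity to the pair $(u,\,|\na v|^2)$ on $D := \Om \setminus \ol{\om}$. Since $v$ is harmonic, one has $\De|\na v|^2 = 2|\na^2 v|^2$, while $\De u = 1$ and $u = 0$ on $\Ga$. Using also $\pa_\nu |\na v|^2 = 2 <\na^2 v\,\na v, \nu>$, Green's formula rearranges to
\begin{equation*}
\int_\Ga |\na v|^2 u_\nu \, dS_x = \int_D |\na v|^2\, dx + 2\int_D (-u)|\na^2 v|^2\, dx - \int_{\pa \om}\!\big(|\na v|^2 u_\nu - 2u <\na^2 v\,\na v,\nu>\big) dS_x.
\end{equation*}
This already exhibits, on the right-hand side, the exact $\pa\om$-boundary contribution appearing in the statement together with the weighted Cauchy--Schwarz-type deficit $\int_D (-u)|\na^2 v|^2\, dx$; the remaining work is to bound the left-hand side from below by $(r_i/N)\int_\Ga |\na v|^2 dS_x$ and to absorb the spurious interior term $\int_D |\na v|^2\, dx$.

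The first of these estimates follows from a Hopf-type argument. By~\eqref{A4}, for each $p\in\Ga$ there exists an interior ball $\tilde B = B_{r_i}(y) \subset \Om\setminus\ol{\om}$ tangent to $\Ga$ at $p$. Arguing as in the proof of Lemma~\ref{lem:relationdist}, the explicit torsion function $\tilde w(x) := (|x-y|^2 - r_i^2)/(2N)$ satisfies $\tilde w = u = 0$ at $p$ and $u \le \tilde w$ throughout $\tilde B$ by comparison. A direct computation gives $\tilde w_\nu(p) = r_i/N$, and since the outward normal of $\tilde B$ at $p$ agrees with that of $\Om$, a one-sided derivative inspection of $u-\tilde w$ at $p$ yields $u_\nu(p) \ge r_i/N$ at every $p \in \Ga$. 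Hence
\begin{equation*}
\int_\Ga |\na v|^2 u_\nu \, dS_x \ge \frac{r_i}{N} \int_\Ga |\na v|^2\, dS_x.
\end{equation*}

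To absorb the interior term, I would apply Corollary~\ref{cor:JohnPoincareaigradienti} componentwise to the harmonic vector field $\na v$, with $r = p = 2$ and $\al = 1/2$: in case (i) the anchoring at the critical point $x_0$ gives $\int_D|\na v|^2\,dx \le \mu_{2,2,1/2}(D,x_0)^{-2}\int_D \de\, |\na^2 v|^2\, dx$, while in case (ii) the zero-mean hypothesis on $\na v$ gives the analogous bound with $\ol{\mu}_{2,2,1/2}(D)^{-2}$. Combining with the linear growth bound $\de \le (2N/r_i)(-u)$ from~\eqref{eq:relationdist} converts the weight $\de$ into $(-u)$ and produces $\int_D |\na v|^2\, dx \le (2N/(r_i\mu^2))\int_D (-u)|\na^2 v|^2\, dx$, where $\mu$ denotes the corresponding Poincaré constant. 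Substituting this and the Hopf bound into the Green identity above and multiplying through by $N/r_i$ yields the stated inequality. The two delicate points I anticipate are: (a) using the \emph{sharp} Hopf constant $r_i/N$ (rather than the weaker $r_i/(2N)$ that would follow directly from~\eqref{eq:relationdist}), since otherwise one loses a factor of two and misses the precise constants $2N/r_i$ and $N/r_i$ in the conclusion; and (b) justifying that Corollary~\ref{cor:JohnPoincareaigradienti}, formulated for scalar harmonic functions, applies componentwise to the harmonic field $\na v$, with the critical-point anchoring or the zero-mean condition inherited from the scalar setting.
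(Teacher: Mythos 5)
Your proof is correct and takes essentially the same route as the paper: the paper's differential identity $\dv\{v_i^2\na u - u\na(v_i^2)\} = v_i^2 - 2u|\na v_i|^2$, summed over $i$, is precisely Green's second identity for the pair $(u,|\na v|^2)$, and the subsequent steps (the Hopf bound $u_\nu\ge r_i/N$ on $\Ga$, Corollary~\ref{cor:JohnPoincareaigradienti} with $r=p=2$, $\al=1/2$, and the conversion $\de\le(2N/r_i)(-u)$ via~\eqref{eq:relationdist}) are identical. The only cosmetic difference is that the paper cites a Hopf-type lemma from the literature for~\eqref{597569865} whereas you sketch the barrier comparison directly; both yield the same sharp constant.
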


\begin{proof}
We begin with the following differential identity:
\begin{equation}
\label{diffidimpr}
\dv\,\{v^2 \na u - u \, \na(v^2)\}= v^2 \De u - u \, \De (v^2)=  v^2 - 2 u \, |\na v|^2,
\end{equation}
that holds in $\Om \setminus \ol{\om} $ for any harmonic function~$v$ in $\Om \setminus \ol{\om} $, if $u$ is satisfies \eqref{eq:problem}.

Next, we integrate~\eqref{diffidimpr} on $\Om \setminus \ol{\om}$ and, by the divergence theorem, we get
\begin{equation*}
\int_{\Ga} v^2 u_{\nu} \, dS_x = \int_{\Om \setminus \ol{\om} } v^2\, dx + 
2 \int_{\Om \setminus \ol{\om} } (-u) |\na v|^2 \,dx - \int_{\pa \om}
\left\lbrace v^2 u_{\nu} - 2 u v v_\nu \right\rbrace \, dS_x  .
\end{equation*}
We use this identity replacing the harmonic function~$v$ with
its derivative~$v_{i}$, and then we sum up over $i=1,\dots, N$.
In this way, we obtain
\begin{equation}\label{fieryu458y45hn}\begin{split}
\int_{\Ga} |\na v|^2 u_{\nu}\, dS_x =\;& 
\int_{\Om \setminus \ol{\om} } |\na v|^2 \,dx + 2
\int_{\Om \setminus \ol{\om} } (-u) |\na^2 v|^2\, dx 
\\&\qquad\qquad-
\int_{\pa \om} \left\lbrace |\na v|^2 u_{\nu} - 2 u < \na^2 v \na v , \nu > 
\right\rbrace \, dS_x  .
\end{split}\end{equation}
We observe that,
by an adaptation of Hopf's lemma
(see~\cite[Theorem 3.10]{MP1}),
the term $u_\nu$ in the left-hand side of~\eqref{fieryu458y45hn}
can be bounded from below by $r_i /N$, namely
\begin{equation}\label{597569865}
u_\nu\ge \frac{r_i}{N} \qquad {\mbox{ on }}\Gamma.
\end{equation}
Hence, we obtain from~\eqref{fieryu458y45hn} that
\begin{equation}\label{u707e6egfgfvvk}\begin{split}
\frac{r_i}{N} \, \int_{\Ga} |\na v|^2 \,
dS_x \le\;& \int_{\Om \setminus \ol{\om} } |\na v|^2 \,dx +
2 \int_{\Om \setminus \ol{\om} } (-u) |\na^2 v|^2\, dx  
\\&\qquad\qquad-
\int_{\pa \om} \left\lbrace |\na v|^2 u_{\nu} - 2 u < \na^2 v \na v , \nu > \right\rbrace \, dS_x.
\end{split}\end{equation}
Now we suppose that~$x_0$ is a critical point of~$v$ in~$\Omega$ and we
use item~(i) in Corollary~\ref{cor:JohnPoincareaigradienti}, applied here
with~$D := \Om \setminus \ol{\om}$, $r:=p:=2$ and $\al:=1/2$, and we deduce
from~\eqref{u707e6egfgfvvk} that
\begin{equation*}\begin{split}
\int_{\Ga} |\na v|^2 \,
dS_x \le\;& \frac{N}{r_i\,\mu_{2,2\frac12}(\Omega\setminus\overline\omega,x_0)^{2}}
\int_{\Om \setminus \ol{\om} }\delta |\na^2 v|^2 \,dx +
\frac{2 N}{r_i} \int_{\Om \setminus \ol{\om} } (-u) |\na^2 v|^2\, dx  
\\&\qquad\qquad-
\frac{N}{r_i}\int_{\pa \om} \left\lbrace |\na v|^2 u_{\nu} - 2 u < \na^2 v \na v , \nu > \right\rbrace \, dS_x.
\end{split}\end{equation*}
{F}rom this and~\eqref{eq:relationdist}, one obtains the desired estimate in item~(i). 
In a similar way, using item~(ii) in Corollary~\ref{cor:JohnPoincareaigradienti}, one
shows item~(ii) here, thus completing the proof.
\end{proof}

Now,
we turn our attention to the harmonic function 
\begin{equation}\label{DEh} h := q-u
, \end{equation}
where
\begin{equation}
\label{quadratic}
q(x):=\frac{1}{2N}\, (|x-z|^2-a),
\end{equation}
for some choice of $z\in\RR^N$ and $a\in\RR$.

We remark that, by a direct computation, it is easy to check that
the Cauchy-Schwarz deficit appearing in the left-hand side
of~\eqref{eq:FIdconsovradeterminazione} can be written in terms of $h$ as
\begin{equation}\label{eq:CSintermsofh}
| \na^2 h |^2 = | \na^2 u|^2- \frac{1}{N} = | \na^2 u|^2- \frac{(\De u)^2}{N}.
\end{equation}

Now we specify the choice of the point $z$ in \eqref{quadratic} as follows
\begin{equation}\label{eq:choicez_baricenter}
z:= \frac{1}{|\Om \setminus \ol{\om}|} \left\lbrace \int_{\Om \setminus \ol{\om}} x \, dx - N \int_{\pa \om} u\;\nu \, dS_x \right\rbrace.
\end{equation}
We notice that as 
$\ol{\om}$ tends to the empty set and $\int_{\pa \om} (-u) \, dS_x$ tends to 0, $z$ tends to the baricenter of $\Om$
(however, $z$ is {\it not} the baricenter of~$\Om \setminus \ol{\om}$).

With this choice of $z$ we have that 
\begin{equation}\label{7ujw64rt}
\int_{\Om \setminus \ol{\om} } \na h \, dx = 0 .
\end{equation}
Indeed, 
by a direct computation we get that
\begin{equation}\label{u9iwhschhss}\na h = \frac{(x-z)}{N} - \na u ,\end{equation}
and therefore,
using Green's identity and the fact that $u=0$ on $\Ga$,
\begin{eqnarray*}&&
\int_{\Om \setminus \ol{\om} } \na h \, dx = \int_{\Om \setminus \ol{\om} } \frac{(x-z)}{N} \, dx - \int_{\Om \setminus \ol{\om} } \na u \, dx  
\\&&\qquad=
\int_{\Om \setminus \ol{\om} } \frac{x}{N} \, dx - \frac{z}{N}|\Om \setminus \ol{\om}| - \int_{\pa \om} u\;\nu \, dS_x = 0,
\end{eqnarray*}
thus proving~\eqref{7ujw64rt}.

Gathering the previous results, we thus obtain the desired
estimate on the pseudo-distance:

\begin{thm}\label{thm:stabestimateforpsudodistance}
Let $u \in C^2 (\ol{\Om} \setminus \om)$ satisfy \eqref{eq:problem} and \eqref{eq:overdetermination}, and assume
that~$u \le 0$ on~$\pa \om$.
Let assumptions~\eqref{A3} and~\eqref{A4} be verified.

Then, with the notation of~\eqref{quadratic} and~\eqref{eq:choicez_baricenter},
we have that
\begin{eqnarray*}&&
\int_{\Ga}  \left| \frac{|x-z|}{N} - c \right|^2 \, dS_x \\&\le& 
\frac{N}{r_i} \left(1+\frac{N}{r_i\, \ol{\mu}_{2,2, \frac{1}{2} }(\Om \setminus \ol{\om} )^2 } \right) \Biggl\{ 
 \int_{ \pa \om} \biggl[ c^2  \left( \frac{ < x, \nu>}{N} - u_\nu \right)  +
 2u \left( \frac{< x, \nu >}{N} - u_\nu \right)  
\\&&\quad+
 u_\nu | \na u |^2 - 2 \frac{< x, \na u >}{N} u_\nu +
 | \na u|^2 \frac{< x , \nu >}{N} + \frac{2}{N} u u_\nu - 2 <\na^2 u \na u , \nu > u 
 \biggr] \,dS_x \, \Biggr\} 
\\&&\quad-
\frac{N}{r_i} \int_{\pa \om} \left\lbrace |\na h|^2 u_{\nu} - 2 u < \na^2 h \na h , \nu > 
\right\rbrace \, dS_x
.\end{eqnarray*}
\end{thm}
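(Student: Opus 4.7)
The plan is to assemble three ingredients already prepared in the paper: the integral identity from Corollary~\ref{cor:Identitydoposovradeterminazione}, the trace inequality from Lemma~\ref{lem:genericv-trace inequality}(ii) applied to the harmonic function $h=q-u$ defined in~\eqref{DEh}--\eqref{quadratic}, and a pointwise estimate on $\Gamma$ relating $|\nabla h|^2$ to the quantity $||x-z|/N-c|^2$.

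First I would observe that since $\Delta q=1=\Delta u$ in $\Omega\setminus\overline\omega$, the function $h$ is harmonic there. With the choice of $z$ in~\eqref{eq:choicez_baricenter} (which is exactly tailored to achieve this), identity~\eqref{7ujw64rt} holds, so item~(ii) of Lemma~\ref{lem:genericv-trace inequality} can be applied to $v:=h$. This yields
\begin{equation*}
\int_\Gamma |\nabla h|^2\,dS_x \le \frac{2N}{r_i}\!\left(1+\frac{N}{r_i\,\overline\mu_{2,2,1/2}(\Omega\setminus\overline\omega)^2}\right)\!\int_{\Omega\setminus\overline\omega}(-u)|\nabla^2 h|^2\,dx - \frac{N}{r_i}\int_{\partial\omega}\!\bigl\{|\nabla h|^2 u_\nu - 2u\langle\nabla^2 h\,\nabla h,\nu\rangle\bigr\}\,dS_x.
\end{equation*}
Next, using~\eqref{eq:CSintermsofh}, the quantity $|\nabla^2 h|^2$ is exactly the Cauchy-Schwarz deficit $|\nabla^2 u|^2-(\Delta u)^2/N$, so the first integral on the right equals $\frac12\int_{\Omega\setminus\overline\omega}(-u)\cdot 2\{|\nabla^2 u|^2-(\Delta u)^2/N\}\,dx$, which by Corollary~\ref{cor:Identitydoposovradeterminazione} is rewritten as precisely the long boundary integral over $\partial\omega$ that appears in the conclusion of the theorem (absorbing the factor~$2$ turns $2N/r_i$ into $N/r_i$).

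The remaining step, which is the only genuinely new point, is a pointwise comparison on $\Gamma$. Since $u\equiv 0$ on $\Gamma$, we have $\nabla u=u_\nu\nu=c\nu$ by~\eqref{eq:overdetermination}, so on $\Gamma$
\begin{equation*}
\nabla h = \frac{x-z}{N}-c\nu,\qquad |\nabla h|^2 = \frac{|x-z|^2}{N^2}-\frac{2c}{N}\langle x-z,\nu\rangle+c^2.
\end{equation*}
Expanding $||x-z|/N-c|^2$ and subtracting gives
\begin{equation*}
|\nabla h|^2-\left|\frac{|x-z|}{N}-c\right|^2 = \frac{2c}{N}\bigl(|x-z|-\langle x-z,\nu\rangle\bigr)\ge 0,
\end{equation*}
where nonnegativity uses Cauchy-Schwarz together with $c\ge r_i/N>0$ from~\eqref{597569865}. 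Integrating over~$\Gamma$ yields $\int_\Gamma||x-z|/N-c|^2\,dS_x\le\int_\Gamma|\nabla h|^2\,dS_x$, and concatenating with the two previous displays delivers the stated estimate.

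The only real obstacle is the pointwise inequality on $\Gamma$ — all other pieces have been packaged into lemmas. Care must be taken to confirm $c>0$ (to ensure the sign of the discrepancy $(2c/N)(|x-z|-\langle x-z,\nu\rangle)$) and to verify that the boundary integrals over $\partial\omega$ produced by the three ingredients align term-by-term with the expression in the statement, in particular that the $|\nabla h|^2$ and $\langle\nabla^2 h\,\nabla h,\nu\rangle$ contributions appear \emph{unchanged} in the final formula rather than being re-expressed in terms of $u$.
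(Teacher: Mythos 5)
Your proof is correct and follows essentially the same route as the paper's: apply Lemma~\ref{lem:genericv-trace inequality}(ii) to $h$, rewrite the volume term via \eqref{eq:CSintermsofh} and Corollary~\ref{cor:Identitydoposovradeterminazione}, and compare $|\nabla h|^2$ with $\bigl||x-z|/N-c\bigr|^2$ on~$\Gamma$. The only minor difference is in that last comparison: the paper deduces $\bigl||x-z|/N - c\bigr|\le|\nabla h|$ on $\Gamma$ directly from the reverse triangle inequality applied to $\nabla h=(x-z)/N-\nabla u$ together with $|\nabla u|=c$ on $\Gamma$, whereas you expand and exhibit the explicit nonnegative discrepancy $\tfrac{2c}{N}\bigl(|x-z|-\langle x-z,\nu\rangle\bigr)$ — the two are equivalent, and your version makes the reliance on $c>0$ (from \eqref{597569865}) more visible.
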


\begin{proof}
By~\eqref{u9iwhschhss} and the Cauchy-Schwarz inequality, 
$$
\left| \frac{|x-z|}{N} - |\na u| \right| \le | \na h | .
$$
Hence, by using \eqref{eq:overdetermination}, we get that
\begin{equation}\label{jfierut8y6yhr}
\int_{\Ga}  \left| \frac{|x-z|}{N} - c \right|^2 dS_x \le \int_{\Ga} |\na h|^2 \, dS_x.
\end{equation}
Now we remark that we are in the position of using point~(ii)
of Lemma~\ref{lem:genericv-trace inequality} with~$v:=h$, thanks to~\eqref{7ujw64rt}.
Hence, putting together~\eqref{jfierut8y6yhr},
Lemma~\ref{lem:genericv-trace inequality}, \eqref{eq:CSintermsofh}
and \eqref{eq:FIdconsovradeterminazione} we get the desired result.
\end{proof}

\section{Some estimates on \texorpdfstring{$\rho_e - \rho_i$}{rhoe - rhoi}}\label{I:E}
If~$z$ is as in~\eqref{eq:choicez_baricenter}, we set
\begin{equation}\label{eq:rhoe e rhoi}
\rho_e:= \max_{x \in \Ga}{|x-z|} \qquad{\mbox{and}}\qquad \rho_i:=\min_{x \in \Ga}{|x-z|} .
\end{equation}
In this way, whenever $z \in \Om$, we have that
\begin{equation}\label{eq:PA}
B_{\rho_i} (z) \subset \Om \subset B_{\rho_e}(z)  \quad \text{ and } \quad \Ga \subset \ol{B}_{\rho_e}(z) \setminus B_{\rho_i} (z) .
\end{equation}

The aim of the present section is 
%
%
to obtain quantitative estimates for the difference $\rho_e - \rho_i$.

We remark that, recalling the notation in~\eqref{DIAMET},
\begin{equation}\label{eq:Ri}
\rho_e-\rho_i\le d_\Omega.
\end{equation}
Indeed, for every~$w_1$, $w_2\in\Gamma$, we have that
$$ |w_1-z|\le |w_1-w_2|+|w_2-z|\le d_\Omega+|w_2-z|.$$
As a result, taking~$w_1$ maximizing the distance to~$z$,
and~$w_2$ minimizing the distance to~$z$, we obtain that~$\rho_e\le
d_\Omega+\rho_i$, that is~\eqref{eq:Ri}.

Also, by using $\de_\Ga (x)$ to denote the distance of a point $x \in \Om$ to the boundary $\Ga$ we define the {\it complementary parallel set} as
\begin{equation}\label{def:complementary parallel set}
\Om^c_\si:=\{ y\in\Om: \de_\Ga (y) < \si\} \quad \mbox{ for } 0<\si \le r_i.
\end{equation}
Notice that, since $\Om \setminus \ol{\om}$ satisfies the uniform interior sphere
condition of radius $r_i$, it holds that
$$\Om^c_\si \subset \Om \setminus \ol{ \om }\quad \mbox{ for every } 0<\si \le r_i.$$

Lemma \ref{lem:Lp-estimate-oscillation-generic-v} below
contains an inequality  for the oscillation of a harmonic function $v$ in terms of its $L^p$-norm in $\Om \setminus \ol{\om} $ and of a bound for its gradient in $\Om^c_\si$.
More precisely, recalling the notation in~\eqref{media}
and~\eqref{ellep}, we have:

\begin{lem}
\label{lem:Lp-estimate-oscillation-generic-v}
Let $\Om \setminus \ol{\om} \subset\RR^N$
satisfy the uniform interior sphere condition of radius $r_i$ on~$\Ga$,
that is~\eqref{A4bis}, and suppose that~$\Ga$
is of class~$C^1$.
Let $v$ be a harmonic function in $\Om \setminus \ol{\om} $ of
class $C^1 (\ol{\Om^c_{r_i}})$, and let~$G$ be an upper bound for the gradient of $v$ on $\ol{\Om^c_{r_i}}$.
\par
Then, given~$p\ge1$, there exist two positive
constants $a_{N,p}$ and $\al_{N,p}$ depending only on $N$ and $p$ such that if
\begin{equation}
\label{smallness-generic-v}
\nr v - v_{\Om \setminus \ol{\om} } \nr_{p, \Om \setminus \ol{\om} } \le \al_{N,p} \, r_{i}^{\frac{N+p}{p}}  G  ,
\end{equation}
then we have that
\begin{equation}
\label{Lp-stability-generic-v}  
\max_{\Ga} v - \min_{\Ga} v \le a_{N,p} \,  G^{ \frac{N}{N+p} } \, \nr v - v_{\Om \setminus \ol{\om} } \nr_{p, \Om \setminus \ol{\om} }^{ p/(N+p) }.
\end{equation} 
\end{lem}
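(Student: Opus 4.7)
The strategy is a familiar Campanato-type interpolation: estimate the pointwise values of $v$ at the extremal boundary points by interpolating between a gradient bound (valid near $\Ga$) and a mean-value estimate (valid because $v$ is harmonic). Specifically, let $x^*,x_*\in\Ga$ realize the maximum and minimum of $v$ on $\Ga$; by the triangle inequality it suffices to bound $|v(x^*)-v_{\Om\setminus\ol\om}|$ and $|v(x_*)-v_{\Om\setminus\ol\om}|$ and sum the two estimates, so we may focus on a single boundary point, call it $\hat x\in\Ga$.

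By the interior sphere condition on $\Ga$ in \eqref{A4bis}, there is $y^*\in\Om\setminus\ol\om$ with $B_{r_i}(y^*)\subset\Om\setminus\ol\om$ and $\hat x\in\pa B_{r_i}(y^*)$. For any $\rho\in(0,r_i)$, the point $x_\rho:=\hat x-\rho\,\nu(\hat x)$ lies in $\ol{\Om^c_{r_i}}$ and the ball $B_\rho(x_\rho)$ is contained in $B_{r_i}(y^*)\subset\Om\setminus\ol\om$, so two independent estimates are at our disposal. Integrating along the segment from $\hat x$ to $x_\rho$ and using the gradient bound yields
\begin{equation*}
|v(\hat x)-v(x_\rho)|\le G\,\rho.
\end{equation*}
On the other hand, since $v$ is harmonic, the mean value property gives $v(x_\rho)=v_{B_\rho(x_\rho)}$, so by Jensen's (or H\"older's) inequality applied to $v-v_{\Om\setminus\ol\om}$,
\begin{equation*}
|v(x_\rho)-v_{\Om\setminus\ol\om}|\le |B_\rho|^{-1/p}\,\nr v-v_{\Om\setminus\ol\om}\nr_{p,\Om\setminus\ol\om}=c_{N,p}\,\rho^{-N/p}\,\nr v-v_{\Om\setminus\ol\om}\nr_{p,\Om\setminus\ol\om}.
\end{equation*}

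Combining the two bounds, the triangle inequality gives, for every $\rho\in(0,r_i)$,
\begin{equation*}
|v(\hat x)-v_{\Om\setminus\ol\om}|\le G\,\rho+c_{N,p}\,\rho^{-N/p}\,\nr v-v_{\Om\setminus\ol\om}\nr_{p,\Om\setminus\ol\om}.
\end{equation*}
Optimizing the right-hand side in $\rho$ gives the critical radius $\rho_*=\bigl(\tfrac{N c_{N,p}}{p}\,\nr v-v_{\Om\setminus\ol\om}\nr_{p,\Om\setminus\ol\om}/G\bigr)^{p/(N+p)}$, and plugging back produces exactly a bound of the form
\begin{equation*}
|v(\hat x)-v_{\Om\setminus\ol\om}|\le \tfrac{1}{2}\,a_{N,p}\,G^{N/(N+p)}\,\nr v-v_{\Om\setminus\ol\om}\nr_{p,\Om\setminus\ol\om}^{p/(N+p)}.
\end{equation*}
Applying this at $\hat x=x^*$ and $\hat x=x_*$ and subtracting yields \eqref{Lp-stability-generic-v}.

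The main (only) delicate point is that the optimizing $\rho_*$ must satisfy $\rho_*<r_i$ in order for the two estimates above to be simultaneously valid; imposing this requirement produces precisely the smallness condition \eqref{smallness-generic-v} with an explicit $\al_{N,p}$ coming from $c_{N,p}$. If instead $\nr v-v_{\Om\setminus\ol\om}\nr_{p,\Om\setminus\ol\om}$ fails the smallness assumption, the statement is vacuous in the sense that any $\rho\in(0,r_i)$ yields a bound that is weaker than the interpolated one. Thus the proof reduces to the two elementary inequalities above, the choice of the optimizer, and a careful bookkeeping of the constants in terms of $N$, $p$ and $r_i$.
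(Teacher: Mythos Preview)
Your argument is correct and essentially identical to the paper's: both move from a boundary extremal point to an interior point along the inward normal, control the first step by the gradient bound $G$, control the second by the mean value property plus H\"older, and then optimize the depth parameter, with the smallness condition \eqref{smallness-generic-v} arising precisely from the constraint that the optimal radius not exceed $r_i$. The only cosmetic difference is that the paper bounds the oscillation via a single point $\bar x$ maximizing $|v-\lambda|$ on $\Ga$ (using $\max_\Ga v-\min_\Ga v\le 2|v(\bar x)-\lambda|$), whereas you treat the two extremal points separately and add; the resulting constants differ only by harmless numerical factors.
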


Lemmata~\ref{lem:Lp-estimate-oscillation-generic-v} and~\ref{lem:L2-estimate-oscillation}
and Theorem~\ref{thm:serrin-W22-stability} here adapt to the present situation ideas originating from~\cite{Pog2} and \cite{MP3} -- see also \cites{MP4} for generalizations in other directions of those ideas. 
Here, we obtain Lemma~\ref{lem:Lp-estimate-oscillation-generic-v} as an immediate consequence of the following new refined estimate, that will be crucial in Subsection \ref{subsec:differentchoicesofz}.

\begin{lem}
\label{lem:refined_Lp-estimate-oscillation-generic-v}
Let $\Om \setminus \ol{\om} \subset\RR^N$
satisfy the uniform interior sphere condition of radius $r_i$ on~$\Ga$,
that is~\eqref{A4bis}, and suppose that~$\Ga$
is of class~$C^1$.
Let $v$ be a harmonic function in $\Om \setminus \ol{\om} $ of
class $C^1 (\ol{\Om^c_{r_i}})$, and let~$G$ be an upper bound for the gradient of $v$ on $\ol{\Om^c_{r_i}}$.

Given $\la \in \RR$, we choose $\ol{x} \in \Ga$ for which 
\begin{equation}\label{eq:xsoprasegnatoilmax}
|v(\ol{x}) - \la| = \max_\Ga  |v(x) - \la| 
\end{equation}
and set
\begin{equation}\label{eq:x0puntoperlemmarefined}
x_0 := \ol{x} - r_i \nu( \ol{x}).
\end{equation}

\par
Then, given~$p\ge1$, there exist two positive
constants $a_{N,p}$ and $\al_{N,p}$ depending only on $N$ and $p$ such that if
\begin{equation}
\label{eq:suppalla_refined-smallness-generic-v}
\nr v - \la \nr_{p, B_{r_i}(x_0) } \le \al_{N,p} \, r_{i}^{\frac{N+p}{p}}  G  .
\end{equation}
then we have that
\begin{equation}
\label{eq:suppalla_refined-Lp-stability-generic-v}  
\max_{\Ga} v - \min_{\Ga} v \le a_{N,p} \,  G^{ \frac{N}{N+p} } \, \nr v - \la \nr_{p,  B_{r_i}(x_0) }^{ p/(N+p) }
\end{equation}
\end{lem}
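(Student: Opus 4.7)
My plan is to bound $\max_\Ga v - \min_\Ga v$ by $2M$, where $M := |v(\overline x) - \la|$ (since $\overline x$ maximizes $|v - \la|$ on $\Ga$), and then to estimate $M$ from above in terms of the $L^p$-norm of $v - \la$ on the single ball $B_{r_i}(x_0)$. After possibly replacing $v$ and $\la$ by their negatives, I may assume $v(\overline x) - \la = M \ge 0$. By~\eqref{A4bis}, the closed ball $\overline{B_{r_i}(x_0)}$ lies in $\overline{\Om\setminus\overline\om}$ and touches $\Ga\cup\pa\om$ only at $\overline x$, so it is available as the ``test'' ball where I will concentrate my estimates.

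The key Lipschitz estimate reads as follows. For $s \in (0, r_i]$ and any $y \in B_{r_i}(x_0) \cap B_s(\overline x)$, the segment from $\overline x$ to $y$ lies in $\overline{B_{r_i}(x_0)} \cap \overline{B_s(\overline x)}$ by convexity; every point of this segment is in $\overline{\Om}$ and within distance $s \le r_i$ from $\overline x \in \Ga$, so the whole segment is contained in $\overline{\Om^c_{r_i}}$. Since $|\na v| \le G$ there, I obtain $|v(y) - v(\overline x)| \le G\,|y-\overline x| \le G s$, and hence $v(y) - \la \ge M - G s$. Combining this with the elementary geometric fact that, for $s \in (0, r_i]$, the lens $B_{r_i}(x_0) \cap B_s(\overline x)$ contains the ball of radius $s/2$ centered at $\overline x - (s/2)\nu(\overline x)$ (hence has measure at least $2^{-N}|B_1|\, s^N$, as one verifies directly with the triangle inequality), I arrive at the integral bound
\begin{equation*}
\nr v - \la \nr_{p, B_{r_i}(x_0)}^p \;\ge\; (M - G s)^p \cdot 2^{-N}|B_1|\, s^N.
\end{equation*}

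Now I optimize in $s$. If $M/(2G) \le r_i$, the choice $s := M/(2G)$ makes $M - Gs = M/2$, and the previous bound rearranges to $M \le C_{N,p}\,G^{N/(N+p)}\,\nr v - \la \nr_{p, B_{r_i}(x_0)}^{p/(N+p)}$, yielding~\eqref{eq:suppalla_refined-Lp-stability-generic-v} with $a_{N,p} := 2C_{N,p}$. The opposite case $M > 2Gr_i$, however, cannot occur under the smallness hypothesis: applying the same bound with $s := r_i$ would force $\nr v - \la \nr_{p, B_{r_i}(x_0)} \ge (2^{-N}|B_1|)^{1/p}\, G\, r_i^{(N+p)/p}$, contradicting~\eqref{eq:suppalla_refined-smallness-generic-v} as soon as $\al_{N,p}$ is taken strictly smaller than $(2^{-N}|B_1|)^{1/p}$. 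I do not foresee any serious obstacle in this argument; the only mildly delicate point is the verification that the straight segment from $\overline x$ to $y$ never leaves the region $\overline{\Om^c_{r_i}}$ where the gradient bound $G$ is actually assumed, which is ensured precisely by keeping $s \le r_i$.
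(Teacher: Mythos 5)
Your proof is correct, but it takes a genuinely different route from the paper's. The paper writes $v(\ol x)=v(\ol y)+\int_0^\si\langle\na v(\ol x-t\nu(\ol x)),\nu(\ol x)\rangle\,dt$ with $\ol y:=\ol x-\si\nu(\ol x)$, bounds the integral by $\si G$, and then crucially invokes the \emph{mean-value property of harmonic functions} on $B_\si(\ol y)\subset B_{r_i}(x_0)$ to control $|v(\ol y)-\la|$ by $\nr v-\la\nr_{p,B_{r_i}(x_0)}$, finally optimizing in $\si$. You avoid harmonicity altogether: working only with the gradient bound $|\na v|\le G$ on $\ol{\Om^c_{r_i}}$, you propagate the value at $\ol x$ by Lipschitz continuity to obtain the pointwise lower bound $|v(y)-\la|\ge M-Gs$ on the lens $B_{r_i}(x_0)\cap B_s(\ol x)$, integrate this over the half-sized ball $B_{s/2}\bigl(\ol x-\tfrac s2\nu(\ol x)\bigr)$ contained in the lens (this inclusion and $B_{s/2}\subset B_{r_i}(x_0)$ are elementary triangle-inequality checks, both correct), and then optimize the resulting lower bound for $\nr v-\la\nr_p^p$ in $s$, handling the regime $M>2Gr_i$ by contradiction with the smallness hypothesis. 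Amusingly, your auxiliary ball is exactly the paper's $B_\si(\ol y)$ under $\si=s/2$, but it is used in a dual way: the paper averages $v-\la$ over the ball to reach $v(\ol y)$, whereas you transport $v(\ol x)$ to the whole ball. Your version is slightly more elementary and more general, since it never uses that $v$ is harmonic; the price is somewhat larger (though still purely dimensional) constants $a_{N,p}$ and smaller admissible $\al_{N,p}$ than those computed in the paper's proof.
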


\begin{proof}
By \eqref{eq:xsoprasegnatoilmax}, it holds that
\begin{equation}\label{eq:miserveperoscnuovolemma}
\max_{\Ga} v - \min_{\Ga} v \le 2 \, | v(\ol{x}) - \la |
\end{equation}

For $0<\si \le r_i$, we
define
$$ \ol{y}:= \ol{x}-\si\nu( \ol{x}) .$$

Notice that, in light of \eqref{eq:x0puntoperlemmarefined} and \eqref{A4bis} -- and being $\Ga$ of class $C^1$ -- we have that
\begin{equation}\label{riprova-jiurherher607687657}
B_\sigma(\ol{y})\subset B_{r_i}(x_0) \subset \Omega\setminus\overline\omega.
\end{equation}

By the fundamental theorem of calculus we have that
\begin{equation}\label{eq:riprova-prova-TFCI-generic v}
v(\ol{x})= v(\ol{y}) + \int_0^\si \lan\na v( \ol{x}-t\nu( \ol{ x} )),\nu( \ol{x} )\ran\,dt .
\end{equation}

Furthermore, since $v$ is harmonic in $\Om \setminus \ol{\om}$,
we can use the mean value property for the balls with radius $\si$ centered at~$\ol{y}$,
thanks to~\eqref{riprova-jiurherher607687657}, and
find that
\begin{eqnarray*}
|v( \ol{y} ) - \la | 
&=&\left| \frac1{|B_1|\, \si^N}\,\int_{B_\si( \ol{y} )} v(y)\,dy- \la \right|
\\
&\le&
\frac1{|B_1|\, \si^N}\,\int_{B_\si( \ol{y} )}|v - \la |\,dy\\&\le&
\frac{1}{ \left[ |B_1|\, \si^N \right]^{1/p} } \, \left[\int_{B_\si( \ol{y} )}|v - \la |^p\,dy\right]^{1/p}\\&\le& 
\frac1{ \left[ |B_1|\, \si^N \right]^{1/p} } \, \left[\int_{B_{r_i}(x_0) }|v - \la |^p\,dy\right]^{1/p} 
,\end{eqnarray*}
where we used an application of H\"older's inequality and~\eqref{riprova-jiurherher607687657} once again.

This, \eqref{eq:miserveperoscnuovolemma} and \eqref{eq:riprova-prova-TFCI-generic v} yield that
\begin{multline}\label{riprova-56}
\max_{\Ga} v - \min_{\Ga} v \le 2 \, |v( \ol{x} ) - \la|
\\
= 2 \, \left| v( \ol{y} ) - \la + \int_0^\si \lan\na v( \ol{ x} -t\nu( \ol{x} )),\nu( \ol{x} )\ran\,dt \right|
\le
2 \left( |v( \ol{y} )- \la | + \sigma G \right)
\\
\le 2 \, \left[  \frac{\nr v - \la \nr_{p, B_{r_i}(x_0) } }{ |B_1|^{1/p} \, \si^{N/p}}+ \si \, G \right] ,
\end{multline}
for every $0<\si \le r_i$.

Therefore, by minimizing the right-hand side of the last inequality, we can conveniently choose 
\begin{equation}
\label{eq:riprova-costsicv}
\si:=\left(\frac{N\,\nr v - \la \nr_{p, B_{r_i}(x_0) }}{ p \, |B_1|^{1/p}\, G }\right)^{p/(N+p)} 
\end{equation}
and obtain \eqref{eq:suppalla_refined-Lp-stability-generic-v}
if $\si \le r_i$; \eqref{eq:suppalla_refined-smallness-generic-v} follows. 

The computations show that
\begin{equation}
\label{eq:costantia_Nal_Nlemmagenericv}
a_{N,p}:= \frac{ 2 (N+p) }{N^{\frac{N}{N+p}} p^{\frac{p}{N+p}} |B_1|^{\frac{1}{N+p}}} 
\quad \mbox{and} \quad \al_{N,p}:= \frac{ p }{N} \, |B_1|^{\frac{1}{p} } .
\end{equation}
\end{proof}

{F}rom Lemma~\ref{lem:refined_Lp-estimate-oscillation-generic-v},
we immediately get the proof of Lemma~\ref{lem:Lp-estimate-oscillation-generic-v}
as follows:

\begin{proof}[Proof of Lemma \ref{lem:Lp-estimate-oscillation-generic-v}]
Since, by \eqref{riprova-jiurherher607687657}
$$
\nr v - \la \nr_{p, B_{r_i}(x_0) }\le \nr v - \la \nr_{p, \Om \setminus \ol{\om} } ,
$$
the desired result easily follows from
Lemma \ref{lem:refined_Lp-estimate-oscillation-generic-v}, by choosing $\la:= v_{\Om \setminus \ol{\om} }$.
The constants~$a_{N,p}$ and $\al_{N,p}$ are still those defined in \eqref{eq:costantia_Nal_Nlemmagenericv}.
\end{proof}

\medskip

We now turn our attention to the harmonic function $h$
introduced in~\eqref{DEh}, and we modify Lemma~\ref{lem:Lp-estimate-oscillation-generic-v} to
link $\rho_e - \rho_i$ to the $L^p$-norm of $h$.
Since $h=q$ on $\Ga$, we have that
\begin{equation}\label{oscillation-P}\begin{split}&
\max_{\Ga} h-\min_{\Ga} h 
=\max_{\Ga} q-\min_{\Ga} q
=\frac{1}{2N} \left(\max_{x\in\Ga} |x-z|^2-\min_{x\in\Ga} |x-z|^2
\right)\\&\qquad\qquad
= \frac1{2N}\,(\rho_e^2-\rho_i^2),
\end{split}\end{equation}
due to~\eqref{quadratic} and~\eqref{eq:rhoe e rhoi}.

We also observe that, by definition of $\rho_e$, it follows that
\begin{equation}\label{Nuovo7scla}
\rho_e \ge \frac{d_\Om }{2}.
\end{equation}
%
%
%

Then, from~\eqref{oscillation-P} and~\eqref{Nuovo7scla} we obtain that
\begin{equation}\label{oscillation}
\max_{\Ga} h-\min_{\Ga} h\ge \frac{d_\Om}{4N} \, (\rho_e-\rho_i).
\end{equation}
The next result gives an explicit bound on the difference $\rho_e -\rho_i$:

\begin{lem}
\label{lem:L2-estimate-oscillation}
Let $\Om \setminus \ol{\om} \subset\RR^N$ satisfy
the uniform interior sphere condition of radius $r_i$ on $\Ga$,
that is~\eqref{A4bis}, and suppose that~$\Ga$
is of class~$C^1$.
Let~$u$ satisfy \eqref{eq:problem} and $u \in C^1 \left( \left( \Om \setminus \ol{\om} \right) \cup \Ga \right)$, let~$q$
be as in \eqref{quadratic} with $z\in\Om$, and let~$h$ be as in~\eqref{DEh}.

Then, there exists a positive constant $C$ such that
\begin{equation}
\label{L2-stability}
\rho_e-\rho_i\le C \, \nr h - h_{\Om\setminus \overline\omega} \nr_{p, \Om\setminus
\overline\omega}^{ p/(N+p) }.
\end{equation}
The constant $C$ depends on $N$, $p$, $d_\Om$, $r_i$, $M$, where 
\begin{equation}
\label{bound-gradient}
M:=\max_{\ol{\Om^c_{r_i}}} |\na u|.
\end{equation}
\end{lem}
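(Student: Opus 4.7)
The plan is to apply Lemma~\ref{lem:Lp-estimate-oscillation-generic-v} to the harmonic function $v:=h$, and then combine the resulting bound on $\max_\Gamma h-\min_\Gamma h$ with the already-established lower bound \eqref{oscillation}. First I would observe that $h=q-u$ is harmonic in $\Omega\setminus\overline\omega$, since $\Delta q=1=\Delta u$ by \eqref{quadratic} and \eqref{eq:problem}. Assumption \eqref{A4bis} together with $\Gamma\in C^1$ gives $\overline{\Omega^c_{r_i}}\subset(\Omega\setminus\overline\omega)\cup\Gamma$, so from $u\in C^1((\Omega\setminus\overline\omega)\cup\Gamma)$ and $q\in C^\infty$ we obtain $h\in C^1(\overline{\Omega^c_{r_i}})$, as required by the hypotheses of Lemma~\ref{lem:Lp-estimate-oscillation-generic-v}.

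Next I need an explicit $G$ controlling $|\nabla h|$ on $\overline{\Omega^c_{r_i}}$. From \eqref{u9iwhschhss} and the assumption $z\in\Omega$ (so that $|x-z|\le d_\Omega$ for every $x\in\overline\Omega$), together with the definition of $M$ in \eqref{bound-gradient}, one may take
$$G:=\frac{d_\Omega}{N}+M.$$
If the smallness condition \eqref{smallness-generic-v} is satisfied with this choice of $G$ and $v=h$, then Lemma~\ref{lem:Lp-estimate-oscillation-generic-v} yields
$$\max_\Gamma h-\min_\Gamma h\le a_{N,p}\,G^{N/(N+p)}\,\|h-h_{\Omega\setminus\overline\omega}\|_{p,\Omega\setminus\overline\omega}^{p/(N+p)},$$
and combining with \eqref{oscillation} produces the estimate \eqref{L2-stability} in this ``small data'' regime, with constant $\tfrac{4N}{d_\Omega}a_{N,p}G^{N/(N+p)}$ depending only on $N,p,d_\Omega,r_i,M$.

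The remaining case is the ``large data'' regime in which \eqref{smallness-generic-v} fails, namely $\|h-h_{\Omega\setminus\overline\omega}\|_{p,\Omega\setminus\overline\omega}>\alpha_{N,p}\,r_i^{(N+p)/p}\,G$. Here I would invoke the trivial bound $\rho_e-\rho_i\le d_\Omega$ from \eqref{eq:Ri}: raising the failed smallness inequality to the power $p/(N+p)$ and using the uniform lower bound $G\ge d_\Omega/N$ to control $G^{-p/(N+p)}$ in terms of $d_\Omega$ and $N$ alone, one obtains
$$\rho_e-\rho_i\le d_\Omega<\frac{d_\Omega}{\alpha_{N,p}^{p/(N+p)}\,r_i\,G^{p/(N+p)}}\,\|h-h_{\Omega\setminus\overline\omega}\|_{p,\Omega\setminus\overline\omega}^{p/(N+p)},$$
which is again of the form \eqref{L2-stability} with a constant depending only on $N,p,d_\Omega,r_i,M$. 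Taking the larger of the two constants in the two regimes completes the proof.

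The main technical point I expect is the two-regime dichotomy: Lemma~\ref{lem:Lp-estimate-oscillation-generic-v} is conditional on the smallness assumption, so one must patch it with the diameter bound \eqref{eq:Ri} when that assumption fails, and then verify that the resulting constant still depends only on the admissible quantities. Choosing $G$ so that its lower bound $d_\Omega/N$ is explicit (rather than just $M$, which could degenerate) is what makes the patching uniform.
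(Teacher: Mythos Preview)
Your proposal is correct and follows essentially the same route as the paper's own proof: set $G:=M+d_\Omega/N$ as a gradient bound for $h$ on $\overline{\Om^c_{r_i}}$, split into the two regimes according to whether \eqref{smallness-generic-v} holds, apply Lemma~\ref{lem:Lp-estimate-oscillation-generic-v} together with \eqref{oscillation} in the small-data case, and fall back on the trivial bound \eqref{eq:Ri} in the large-data case, finally taking the maximum of the two resulting constants. Your extra remark about the lower bound $G\ge d_\Omega/N$ is not strictly needed (the constant is allowed to depend on $M$), but it does no harm.
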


\begin{proof}
By direct computations (see e.g.~\eqref{u9iwhschhss})
it is easy to check that
\begin{equation*}
| \na h | \le M + \frac{d_\Om}{N} \quad \mbox{on } \ol{\Om^c_{r_i}},
\end{equation*}
where $M$ is defined in \eqref{bound-gradient}.

We now 
consider
the constants ~$a_{N,p}$ and $\al_{N,p}$ defined in \eqref{eq:costantia_Nal_Nlemmagenericv}
and we
distinguish two cases, according on whether
\begin{equation}
\label{smallness}
\nr h - h_{\Om\setminus\overline\omega} \nr_{p, \Om\setminus
\overline\omega} \le \al_{N,p} \, \left(M + \frac{d_\Om}{N}\right) \, r_{i}^{\frac{N+p}{p}} 
\end{equation}
or
\begin{equation}\label{smallness-no}
\nr h - h_{\Om\setminus\overline\omega} \nr_{p, \Om
\setminus\overline\omega} > \al_{N,p} \, \left(M + \frac{d_\Om}{N}\right) \, r_{i}^{\frac{N+p}{p}} ,
\end{equation}
If~\eqref{smallness} holds true,
we can apply Lemma~\ref{lem:Lp-estimate-oscillation-generic-v} with $v:=h$ and $G:= M + \frac{d_\Om}{N}$.
Thus, by means of \eqref{oscillation} we deduce that
\eqref{L2-stability} holds with 
\begin{equation}\label{eq:constantCmaxnuovolemmaoscillation}
C:= 4 N \, a_{N,p} \, \frac{ \left(M + \frac{d_\Om}{N} \right)^{ \frac{N}{N+p} } }{ d_\Om } .
\end{equation} 
 
On the other hand, if~\eqref{smallness-no}
holds true,
it is trivial to check that \eqref{L2-stability} is verified with
$$C:= \frac{d_\Om}{ \left[ \al_{N,p} \, \left(M + \frac{d_\Om}{N} \right) \right]^{\frac{p}{N+p}} \, r_{i} },$$
thanks to~\eqref{eq:Ri}.

Thus, \eqref{L2-stability} always holds true if we choose
the maximum between this constant and that in \eqref{eq:constantCmaxnuovolemmaoscillation}.
%
%
\end{proof}

\begin{thm}
\label{thm:serrin-W22-stability} 
Let $\Om \setminus \ol{\om} \subset\RR^N$ satisfy the uniform interior sphere
condition of radius $r_i$,
that is~\eqref{A4}, and suppose that~$\Ga$
is of class~$C^1$.
Let~$u$ satisfy \eqref{eq:problem}, $u \in C^1 \left( \left( \Om \setminus \ol{\om} \right) \cup \Ga \right)$, and suppose that~$u \le 0$ on $\pa \om$.
Let~$q$
be as in \eqref{quadratic} with $z$ chosen as in \eqref{eq:choicez_baricenter},
and assume that~$z$ belongs to $\Om$.
Let~$h$ be as in~\eqref{DEh}.

Then, there exists a positive constant $C$ such that
\begin{equation}\label{eq:C-provastab-serrin-W22}
\rho_e-\rho_i\le C\, \nr \de^{\frac{1}{2} } \, \na^2 h  \nr_{2,\Om \setminus \ol{ \om } }^{\tau_N} ,
\end{equation}
with the following specifications:
\begin{enumerate}[(i)]
\item $\tau_2 = 1$;
\item $\tau_3$ is arbitrarily close to~$1$, in the sense that, for any $\theta\in(0,1)$ sufficiently
small, there exists a positive constant $C$ such that  \eqref{eq:C-provastab-serrin-W22} holds with $\tau_3 = 1- \theta$;
\item $\tau_N = 2/(N-1)$ for $N \ge 4$.
\end{enumerate}

The constant $C$
depends on $N$, $r_i$, $d_\Om$, $M$ (as
defined in \eqref{bound-gradient}),
and $\theta$ (the latter, only in the case $N=3$).
\end{thm}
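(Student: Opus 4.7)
The starting point is Lemma~\ref{lem:L2-estimate-oscillation}: since $h=q-u$ lies in $C^1(\overline{\Omega\setminus\omega})$, is harmonic in $\Omega\setminus\overline\omega$, and $z\in\Omega$ by hypothesis, for every admissible $p\ge 1$ we obtain
\begin{equation}\label{plan:starting}
\rho_e-\rho_i \le C\,\|h-h_{\Omega\setminus\overline\omega}\|_{p,\Omega\setminus\overline\omega}^{p/(N+p)},
\end{equation}
with $C$ depending on $N,p,d_\Omega,r_i,M$. The task is then to estimate the right-hand side by $\|\delta^{1/2}\nabla^2 h\|_{2,\Omega\setminus\overline\omega}$.

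To this end I run a two-step Poincar\'e chain. Since by \eqref{7ujw64rt} the gradient of $h$ has zero mean on $\Omega\setminus\overline\omega$, Corollary~\ref{cor:JohnPoincareaigradienti}(ii) applied with $p_2=2$, $\alpha_2=1/2$ and $r_2\le \frac{2N}{N-1}$ (the largest value allowed by \eqref{eq:conditionHS}) yields $\|\nabla h\|_{r_2}\le C\|\delta^{1/2}\nabla^2 h\|_{2,\Omega\setminus\overline\omega}$. Next, since $h$ is harmonic, I apply Lemma~\ref{lem:John-two-inequalities}(ii) with $p_1=r_2$ and some $\alpha_1\in[0,1]$, $r_1$ compatible with \eqref{eq:conditionHS}, obtaining
\[
\|h-h_{\Omega\setminus\overline\omega}\|_{r_1,\Omega\setminus\overline\omega}\le C\,d_\Omega^{\alpha_1}\|\nabla h\|_{r_2,\Omega\setminus\overline\omega}.
\]
Inserting this into \eqref{plan:starting} with $p=r_1$ gives $\rho_e-\rho_i\le C\,\|\delta^{1/2}\nabla^2 h\|_{2,\Omega\setminus\overline\omega}^{\tau_N}$ with $\tau_N=r_1/(N+r_1)$.

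The three cases of the theorem amount to maximizing $\tau_N$ subject to the constraints of \eqref{eq:conditionHS}. For $N\ge 4$ the choice $\alpha_1=0$, $p_1=r_2=\frac{2N}{N-1}$ (which satisfies $p_1<N$ since $N\ge 4$) gives $r_1=\frac{Np_1}{N-p_1}=\frac{2N}{N-3}$, whence $\tau_N=\frac{2}{N-1}$. For $N=3$ the constraint $p_1<N$ is saturated at $p_1=3$, so I introduce a small weight $\alpha_1=\varepsilon>0$: then $p_1(1-\alpha_1)=3(1-\varepsilon)<3=N$ is admissible, $r_1\le \frac{3}{\varepsilon}$, and $\tau_3=\frac{1}{1+\varepsilon}$, which proves item (ii) by choosing $\varepsilon=\theta/(1-\theta)$. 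For $N=2$ the exponent $r_1/(N+r_1)$ arising from \eqref{plan:starting} is always strictly below $1$ for finite $r_1$, so I bypass Lemma~\ref{lem:L2-estimate-oscillation} entirely and argue as follows: by \eqref{oscillation} it suffices to bound $\max_\Gamma h-\min_\Gamma h\le 2\|h-h_{\Omega\setminus\overline\omega}\|_\infty$; then the Morrey-Sobolev embedding $W^{1,q}\hookrightarrow L^\infty$ for $q>N=2$ (valid on the John domain $\Omega\setminus\overline\omega$, whose John parameter is controlled by $d_\Omega/r_i$), combined with the gradient bound $\|\nabla h\|_4\le C\|\delta^{1/2}\nabla^2 h\|_{2,\Omega\setminus\overline\omega}$ from the first step with $r_2=4$, delivers $\rho_e-\rho_i\le C\|\delta^{1/2}\nabla^2 h\|_{2,\Omega\setminus\overline\omega}$, that is $\tau_2=1$.

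The main obstacle is exactly this case analysis at the boundary of the admissible exponents in \eqref{eq:conditionHS}: for $N=3$ the Poincar\'e constraint $p_1(1-\alpha_1)<N$ forces a $\theta$-loss via a small positive weight, and for $N=2$ it forces us to leave the $L^p$-Poincar\'e route altogether and appeal to a Morrey-type embedding in order to reach the integer exponent $\tau_2=1$. The announced dependence of the constants on $r_i,d_\Omega,M$ is then extracted by tracking the explicit bounds \eqref{eq:estimatePoincaremean}--\eqref{eq:estimate_mupp_Poincarex_0} along the Poincar\'e chain together with the constant produced in Lemma~\ref{lem:L2-estimate-oscillation}.
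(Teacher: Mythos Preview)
Your proposal is correct and follows essentially the same route as the paper: Lemma~\ref{lem:L2-estimate-oscillation} (or, for $N=2$, \eqref{oscillation} plus a Sobolev/Morrey embedding) combined with the two-step Poincar\'e chain built from Lemma~\ref{lem:John-two-inequalities}(ii) and Corollary~\ref{cor:JohnPoincareaigradienti}(ii). The only cosmetic differences are that for $N=3$ the paper shifts the $\theta$-loss into the exponent $p_1=3(1-\theta)$ with $\alpha_1=0$ rather than into a positive weight $\alpha_1=\varepsilon$, and for $N=2$ you should make explicit the Poincar\'e step $\|h-h_{\Omega\setminus\overline\omega}\|_{4}\le C\|\nabla h\|_{4}$ (i.e., \eqref{John-harmonic-poincare} with $r=p=4$, $\alpha=0$) to pass from the full $W^{1,4}$-norm to $\|\nabla h\|_4$.
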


\begin{proof}
For the sake of clarity, we will always use the letter $C$ to denote the constants in all the inequalities appearing in the proof.
Their explicit computation will be clear by following the steps of the proof
(see the forthcoming Remark~\ref{rem:dipendenzecostanti}).
\medskip

(i) Let $N=2$.
By the Sobolev immersion theorem (for instance we apply~\cite[Theorem~9.1]{Fr} to the
function~$h-h_\Om$), we deduce that there exists
a positive constant~$C$ such that
\begin{equation}
\label{eq:immersionSerrinN2VERSIONENEW}
\max_{\ol{\Om}  \setminus \om  } | h - h_{\Om  \setminus \ol{ \om } } |  \le C \, \nr h-h_{\Om  \setminus \ol{ \om } } \nr_{W^{1,4}(\Om  \setminus \ol{ \om } )} .
\end{equation}
As noticed in~\cite[Remark 2.9]{MP3},
the immersion constant in \eqref{eq:immersionSerrinN2VERSIONENEW} depends on $N$ and $r_i$ only.

Applying \eqref{John-harmonic-poincare} with $D:= \Om \setminus \ol{ \om }$,
$v:=h$, $r:=p:=4$, and $\al:=0$ leads to
\begin{equation}\label{fiertu845mdkfn}
\nr h - h_{ \Om \setminus \ol{ \om } } \nr_{W^{1,4}( \Om \setminus \ol{ \om } )}\le C \, \nr \na h\nr_{4, \Om \setminus \ol{ \om } }.
\end{equation}
Also, since \eqref{7ujw64rt} holds true, we can apply item (ii) of
Corollary~\ref{cor:JohnPoincareaigradienti} with~$v:=h$,
$D:= \Om \setminus \ol{ \om }$, $r:=4$, $p:=2$, and $\al:=1/2$
and obtain that
$$
\nr \na h \nr_{4, \Om \setminus \ol{ \om } } \le C \,  \nr \de^{\frac{1}{2} } \, \na^2 h  \nr_{2, \Om \setminus \ol{ \om } } .
$$
{F}rom this and~\eqref{fiertu845mdkfn}, we get that
$$
\nr h - h_{ \Om \setminus \ol{ \om } } \nr_{W^{1,4}( \Om \setminus \ol{ \om } )}\le C \, \nr \de^{\frac{1}{2} } \, \na^2 h \nr_{2, \Om \setminus \ol{ \om } } .
$$
This inequality, together with \eqref{eq:immersionSerrinN2VERSIONENEW}, gives that
\begin{equation*}
\max_\Ga h-\min_\Ga h \le 
C \, \nr \de^{\frac{1}{2} } \, \na^2 h  \nr_{2, \Om \setminus \om }.
\end{equation*}
Thus, by recalling \eqref{oscillation} we get that \eqref{eq:C-provastab-serrin-W22} holds with $\tau_2=1$.

\medskip

(ii) Let $N=3$.
For any~$\theta\in(0,1)$ sufficiently small, we notice that
$$ r:= \frac{3(1- \theta)}{\theta},\qquad
p:=3(1 -\theta )\quad {\mbox{ and }} \quad \al:=0$$
satisfy~\eqref{eq:conditionHS} in Lemma~\ref{lem:John-two-inequalities}.
Hence, we can apply the estimate in~\eqref{John-harmonic-poincare} with
$D:= \Om \setminus \ol{ \om }$ and~$v:=h$, obtaining that
\begin{equation}\label{ioeru4bndftrjt}
\nr h - h_{\Om \setminus \ol{\om} } \nr_{\frac{3( 1 - \theta)}{\theta}, 
\Om \setminus \ol{\om} } \le C \,\nr \nabla h\nr_{2,\Om \setminus \ol{\om} }.\end{equation}
Furthermore,
$$ r:=3 ( 1 - \theta ),\qquad p:=2,\quad {\mbox{ and }}\quad \al:=\frac12$$
satisfy~\eqref{eq:conditionHS}, and therefore
item (ii) of
Corollary~\ref{cor:JohnPoincareaigradienti}, applied again with~$D:= \Om 
\setminus \ol{ \om }$ and~$v:=h$, yields that
$$
\nr \nabla h\nr_{2,\Om \setminus \ol{\om} }\le
C \, \nr \de^{\frac{1}{2} } \, \na^2 h \nr_{2,\Om \setminus \ol{\om} }.
$$
This and~\eqref{ioeru4bndftrjt} give that
$$ \nr h - h_{\Om \setminus \ol{\om} } \nr_{\frac{3( 1 - \theta)}{\theta}, 
\Om \setminus \ol{\om} } \le 
C \, \nr \de^{\frac{1}{2} } \, \na^2 h \nr_{2,\Om \setminus \ol{\om} }.$$
Thus, by using Lemma~\ref{lem:L2-estimate-oscillation} with~$p:=\frac{3( 1 - \theta)}{\theta}$,
we obtain that
$$ \rho_e-\rho_i \le C \, \nr \de^{\frac{1}{2} } \, \na^2 h \nr_{2,\Om \setminus \ol{\om} }^{1-\theta}.$$
which is~\eqref{eq:C-provastab-serrin-W22} with $\tau_3 = 1- \theta $. 
\medskip

(iii) Let $N \ge 4$. In light of~\eqref{7ujw64rt},
we can apply to $h$ item (ii) of Corollary~\ref{cor:JohnPoincareaigradienti}
with $D:= \Om \setminus \ol{ \om }$, $r:=\frac{2N}{N-1}$, $p:=2$, and~$\al:=1/2$
(noticing that they satisfy~\eqref{eq:conditionHS}), and obtain that
\begin{equation}\label{doetutyty}
\nr \na h \nr_{\frac{2N}{N-1}, \Om \setminus \ol{\om} } \le C \, \nr \de^{\frac{1}{2} } \, \na^2 h  \nr_{2,\Om \setminus \ol{\om} } .
\end{equation}
Being $N \ge 4$, we can also
apply \eqref{John-harmonic-poincare} with $D:= \Om \setminus \ol{ \om }$, $v:=h$, 
$r:=\frac{2N}{N-3}$, $p:=\frac{2N}{N-1}$, and~$\al:=0$, and get
\begin{equation}\label{doetutyty2}
\nr h- h_{ \Om \setminus \ol{\om} } \nr_{\frac{2N}{N-3} } \le C \, \nr \na h \nr_{\frac{2N}{N-1},\Om \setminus \ol{\om} }.
\end{equation}
Thus, from~\eqref{doetutyty} and~\eqref{doetutyty2} we conclude that
$$
\nr h- h_{ \Om \setminus \ol{\om} } \nr_{\frac{2N}{N-3} , \Om \setminus \ol{\om}  } \le C \, \nr \de^{\frac{1}{2} } \, \na^2 h  \nr_{2,\Om \setminus \ol{\om} }.
$$
Then, Lemma~\ref{lem:L2-estimate-oscillation}, applied with~$p:=\frac{2N}{N-3}$, gives that~\eqref{eq:C-provastab-serrin-W22}
holds with $\tau_N = 2/(N-1)$.
\end{proof}

\begin{rem}[On the constant $C$ in~\eqref{eq:C-provastab-serrin-W22}]
\label{rem:dipendenzecostanti}
{\rm
The constant $C$ in~\eqref{eq:C-provastab-serrin-W22}
can be explicitly computed by following the steps of the proof
of Theorem~\ref{thm:serrin-W22-stability}, and it can be shown to depend only on the parameters mentioned in the statement of Theorem~\ref{thm:serrin-W22-stability}. 
Indeed, the parameters $\ol{\mu}_{r,p,\al} (\Om \setminus \ol{\om} )$ and $\ol{\mu}_{p,p,\al} (\Om \setminus \ol{\om} )$,  can be estimated by means of \eqref{eq:estimatePoincaremean} and \eqref{eq:estimate_mupp_meanPoincare}.
Then, we notice that $d_{\Om \setminus \ol{\om}} \le d_\Om$
and 
\begin{equation}\label{eq:nuovadarichiamareinsiemeainequality_volume_diameter}
| \Om \setminus \ol{\om} | \le | \Om |. 
\end{equation}
Finally, to remove the dependence on the volume,
we use the trivial bound
\begin{equation}\label{eq:inequality_volume_diameter}
|\Om|^{1/N} \le |B_1|^{1/N} d_\Om /2 .
\end{equation}
}
\end{rem}

\begin{rem}[Another choice for the point $z$ in~\eqref{eq:rhoe e rhoi}] \label{rem:choices z}
{\rm
Another possible way to choose $z$ in~\eqref{eq:rhoe e rhoi}
(different from~\eqref{eq:choicez_baricenter})
is 
$$z= x_0 - \na u (x_0) ,$$
where $x_0 \in \Om \setminus \ol{\om}$ is any point such that $\de (x_0) \ge r_i$. In fact, with this choice we obtain that $\na h (x_0)=0$ and we can thus use item (i) of Corollary \ref{cor:JohnPoincareaigradienti} instead of item (ii).

Then, we can estimate the parameters $\mu_{r,p,\al}
(\Om \setminus \ol{\om}, x_0)$ in terms of $r_i$ and
$d_\Om$ by using \eqref{eq:estimatePoincarex_0},
the fact that $\de (x_0) \ge r_i$, and \eqref{eq:inequality_volume_diameter}.  
\par
Also with this choice, in order to obtain an analogue
of Theorem~\ref{thm:serrin-W22-stability}, we should additionally
require that~$z \in \Om$, to be sure that the ball $B_{\rho_i}(z)$ is
contained in $\Om$.
}
\end{rem}

\section{Stability results and proof of Theorem~\ref{MAIN}}\label{SEC-6}

For the sake of clarity, we state here some notation that
will be used throughout the rest of this paper.

We denote by~$\bar d_\om$ the supremum of the diameters
of all the connected components of~$\omega$ (of course,
if~$\omega$ is connected, then~$\bar d_\om$ coincides with
the diameter of~$\omega$, and, in this case, according to the notation in~\eqref{DIAMET}, it holds that~$\bar d_\om=d_\om$). Then, we have:

\begin{thm}[General stability result for $\rho_e - \rho_i$]
\label{thm:stability_radii}
Let $u \in C^2 (\ol{\Om} \setminus \om)$ satisfy \eqref{eq:problem} and \eqref{eq:overdetermination}, and
suppose that~$u \le 0$ on $\pa \om$.
Let assumptions~\eqref{A3} and~\eqref{A4} be verified.
Assume also that the point $z$ chosen in~\eqref{eq:choicez_baricenter} belongs to~$\Om$.

If $\psi:[0,\infty)\to[0,\infty)$ is a continuous function vanishing at $0$ such that
\begin{equation}\label{eq:assumptions_stabgeneral}
\left. \begin{aligned}
\int_{\pa \om} (-u) \, dS_x = \int_{\pa \om} |u| \, dS_x
\\
\left| \int_{\pa \om}  u \, u_\nu \, dS_x \right|
\\
\left| \int_{\pa \om}  | \na u|^2 \, u_\nu \, dS_x \right|
\\
\int_{\pa \om} | \na u|^2 \, dS_x
\\
\left| \int_{\pa \om} < \na^2 u \, \na u , \nu> \, u \, dS_x \right|
\end{aligned}
\right\rbrace
\le \psi(\eta) \quad \text{ with } \eta= |\pa \om| \, \text{ or } \, \eta = \bar d_\om ,
\end{equation}
then
\begin{equation}
\label{eq:generalstability}
\rho_e-\rho_i\le C \, \psi(\eta)^{\tau_N/2},
\end{equation}
where $\tau_N$ is as in Theorem~\ref{thm:serrin-W22-stability} and $C$ is a positive
constant depending on $N$, $r_i$, $d_\Om$, and~$M$ (as defined in~\eqref{bound-gradient}).
\end{thm}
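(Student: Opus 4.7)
The plan is to combine the integral identity of Corollary \ref{cor:Identitydoposovradeterminazione}, the weight comparison $-u \ge (r_i/2N)\delta$ of Lemma \ref{lem:relationdist}, the structural hypotheses \eqref{eq:assumptions_stabgeneral}, and the pointwise bound of Theorem \ref{thm:serrin-W22-stability} applied to the harmonic function $h=q-u$ of \eqref{DEh}.

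First, using the identity $|\na^2 h|^2 = |\na^2 u|^2 - (\De u)^2/N$ recorded in \eqref{eq:CSintermsofh}, the left-hand side of \eqref{eq:FIdconsovradeterminazione} becomes $2\int_{\Om\setminus\ol{\om}}(-u)|\na^2 h|^2\,dx$. Combining with Lemma \ref{lem:relationdist} gives
\begin{equation*}
\nr\de^{1/2}\,\na^2 h\nr_{2,\Om\setminus\ol{\om}}^2 \le \frac{N}{r_i}\,R,
\end{equation*}
where $R$ collects all the boundary integrals on $\pa\om$ appearing on the right-hand side of \eqref{eq:FIdconsovradeterminazione}.

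Second, I would bound $R$ by $C\,\psi(\eta)$. After translating so that the origin lies in $\Om$, we have $|x|\le d_\Om$ on $\pa\om$. Each term in $R$ that carries a factor $\langle x,\nu\rangle/N$ or $\langle x,\na u\rangle/N$ is then dominated by $d_\Om/N$ times one of the five quantities listed in \eqref{eq:assumptions_stabgeneral}; for instance $|\langle x,\na u\rangle u_\nu| \le d_\Om|\na u|^2$ shows that $\bigl|\int_{\pa\om}\langle x,\na u\rangle u_\nu\,dS_x\bigr|/N \le (d_\Om/N)\int_{\pa\om}|\na u|^2\,dS_x \le (d_\Om/N)\psi(\eta)$. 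The two non-standard contributions $c^2\int_{\pa\om}\langle x,\nu\rangle/N\,dS_x$ and $c^2\int_{\pa\om}u_\nu\,dS_x$ are reduced, via the divergence-theorem identity $\int_{\pa\om}\langle x,\nu\rangle/N\,dS_x = -|\om|$ (recalling that $\nu$ is outward to $\Om\setminus\ol{\om}$, hence inward to $\om$) together with a Cauchy-Schwarz estimate $\bigl|\int_{\pa\om}u_\nu\,dS_x\bigr|\le |\pa\om|^{1/2}\psi(\eta)^{1/2}$, to the geometric quantities $|\om|$, $|\pa\om|$, $\int|\na u|^2\,dS_x$; each of these is then controlled by $\eta$ through elementary considerations (trivially if $\eta = |\pa\om|$, and through the regularity of $\om$ if $\eta = \bar d_\om$). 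Finally, the constant $c$ itself is bounded in terms of $r_i$, $d_\Om$ and $M$ (via the Hopf-type bound \eqref{597569865} from below and standard elliptic estimates from above).

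Third, since $z\in\Om$ by assumption and \eqref{A3}, \eqref{A4} are in force, Theorem \ref{thm:serrin-W22-stability} applies and yields $\rho_e-\rho_i \le C\,\nr\de^{1/2}\,\na^2 h\nr_{2,\Om\setminus\ol{\om}}^{\tau_N}$. Combining with the bound obtained in the second step produces the desired estimate $\rho_e-\rho_i \le C\,\psi(\eta)^{\tau_N/2}$, which is \eqref{eq:generalstability}.

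The main obstacle is the second step, and more precisely the treatment of the $c^2$-prefactor term $c^2\int_{\pa\om}(\langle x,\nu\rangle/N - u_\nu)\,dS_x$, whose individual summands do not appear directly in the list \eqref{eq:assumptions_stabgeneral}: they have to be reduced via the divergence theorem, Cauchy-Schwarz, and isoperimetric-type bounds relating $|\om|$, $|\pa\om|$ and $\bar d_\om$ to the scale parameter $\eta$, before one can absorb them into $C\,\psi(\eta)$.
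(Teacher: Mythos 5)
Your proposal takes essentially the same route as the paper's own proof, which is extremely terse: the paper's argument is literally to translate so that $|\langle x,\nu\rangle|\le|x|\le d_\Omega$ on $\partial\omega$, and then to ``put together'' Theorem \ref{thm:serrin-W22-stability} with formulas \eqref{eq:relationdist}, \eqref{eq:CSintermsofh} and \eqref{eq:FIdconsovradeterminazione}. Your first and third steps spell out exactly this chain, and your reasoning there is correct.

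Where you go beyond the paper is in the second step, which the paper compresses into ``easily follows''. You correctly single out the term
\begin{equation*}
c^2\int_{\partial\omega}\Bigl(\frac{\langle x,\nu\rangle}{N}-u_\nu\Bigr)\,dS_x
\end{equation*}
as the only piece of the right-hand side of \eqref{eq:FIdconsovradeterminazione} whose summands do not appear on the list \eqref{eq:assumptions_stabgeneral} after multiplying by $d_\Omega$ (every other term is a direct application of one of the five listed bounds, as you observe). Your reductions — the divergence-theorem identity $\int_{\partial\omega}\langle x,\nu\rangle/N\,dS_x=-|\omega|$ and the Cauchy--Schwarz bound $\bigl|\int_{\partial\omega}u_\nu\,dS_x\bigr|\le|\partial\omega|^{1/2}\psi(\eta)^{1/2}$, together with $c\le M$ from above (since $c=|\nabla u|$ on $\Gamma$) and $c\ge r_i/N$ from \eqref{597569865} — are all correct. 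Be aware, though, that the last step of your reduction, absorbing $c^2|\omega|$ and $c^2|\partial\omega|^{1/2}\psi(\eta)^{1/2}$ into $C\psi(\eta)$, tacitly requires a comparison of the type $\eta\lesssim\psi(\eta)$ (for $\eta=|\partial\omega|$ one uses the isoperimetric bound $|\omega|\le C|\partial\omega|^{N/(N-1)}\le C|\partial\omega|$ for $|\partial\omega|\le1$, and $|\partial\omega|^{1/2}\psi^{1/2}\le C\psi$ iff $|\partial\omega|\lesssim\psi$). This is not guaranteed by ``continuous and vanishing at $0$'' alone, but it does hold for the linear $\psi$ with which the theorem is actually invoked in Theorem \ref{JAHS334}. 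The paper's own proof glosses over the very same point, so your treatment is if anything more explicit and honest than the original, and your flagging of the $c^2$-prefactor as ``the main obstacle'' is exactly right.
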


\begin{proof} Up to a translation,
we can suppose that the origin lies in~$\Omega$,
and therefore
$$|< x, \nu >|\le
|x| \le d_\Om.$$ Hence,
the desired result easily follows by putting together 
Theorem~\ref{thm:serrin-W22-stability} and formulas~\eqref{eq:relationdist}, \eqref{eq:CSintermsofh} and~\eqref{eq:FIdconsovradeterminazione}.
\end{proof}

\begin{rem}\label{REMA72}
{\rm We notice that
with the choice of~$z$ as in~\eqref{eq:choicez_baricenter},
(if $\eta$ is small enough)
the assumption $z \in \Om$ is satisfied, at least if the baricenter of~$\Omega$
lies in~$\Omega$ (in particular, if~$\Omega$ is convex).
}
\end{rem}

With this preliminary work, we are now in the position of obtaining a quantitative rigidity
result bounding the averaged squared pseudodistance of the form
$$\int_{\Ga}  \left| \frac{|x-z|}{N} - c \right|^2 dS_x,$$
where~$z$ is as in~\eqref{eq:choicez_baricenter} and $c$ is that in \eqref{eq:overdetermination}. The precise result goes as follows:

\begin{thm}[General stability result for a pseudodistance]
\label{thm:stab_pseudodistance}
Let $u \in C^2 (\ol{\Om} \setminus \om)$ satisfy \eqref{eq:problem} and \eqref{eq:overdetermination},
and suppose that~$u \le 0$ on $\pa \om$.
Let assumptions~\eqref{A3} and~\eqref{A4} be verified, and $z$ be as in~\eqref{eq:choicez_baricenter}.

If $\psi:[0,\infty)\to[0,\infty)$ is a continuous function vanishing at $0$ such
that~\eqref{eq:assumptions_stabgeneral} holds true together with
\begin{equation}\label{eq:assumption_further_stabpseudodistance}
\left| \int_{\pa\om} u \, < \na^2 u \, (x-z) , \nu > \, dS_x \right| \le \psi(\eta) \quad \text{ with } \eta= |\pa \om| \, \text{ or } \, \eta = \bar d_\om ,
\end{equation}
then
\begin{equation}\label{eq:stellanuovamiserve}
\int_{\Ga}  \left| \frac{|x-z|}{N} - c \right|^2 dS_x \le C \, \psi(\eta),
\end{equation}
where $C$ is a constant depending on $N$, $r_i$, $d_\Om$.
\end{thm}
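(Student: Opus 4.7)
The plan is to derive \eqref{eq:stellanuovamiserve} by invoking Theorem \ref{thm:stabestimateforpsudodistance} and matching each surface integral on $\partial\om$ in its right-hand side with one of the quantities bounded by $\psi(\eta)$ in \eqref{eq:assumptions_stabgeneral} and \eqref{eq:assumption_further_stabpseudodistance}. First, I would absorb the prefactor $\frac{N}{r_i}\Bigl(1+\frac{N}{r_i\,\ol{\mu}_{2,2,1/2}(\Om\setminus\ol{\om})^2}\Bigr)$ into a constant $C$ depending only on $N$, $r_i$, $d_\Om$: this uses the explicit bound \eqref{eq:estimatePoincaremean} for $\ol{\mu}_{2,2,1/2}^{-1}$ (with $r=p=2$, $\al=1/2$) together with $d_{\Om\setminus\ol{\om}}\le d_\Om$ and the volume control $|\Om\setminus\ol{\om}|^{1/N}\le|B_1|^{1/N}d_\Om/2$ from \eqref{eq:inequality_volume_diameter}, as in Remark \ref{rem:dipendenzecostanti}.

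Up to a translation placing the origin inside $\Om$, so that $|\langle x,\nu\rangle|\le d_\Om$ and $|x-z|\le 2 d_\Om$, I would then expand $|\na h|^2$ and $\langle\na^2 h\,\na h,\nu\rangle$ using $\na h=(x-z)/N-\na u$ and $\na^2 h=I/N-\na^2 u$, and group the resulting terms. The ``diagonal'' pieces $\int_{\partial\om}u u_\nu$, $\int_{\partial\om}|\na u|^2 u_\nu$, $\int_{\partial\om}|\na u|^2$ and $\int_{\partial\om}\langle\na^2 u\,\na u,\nu\rangle u$ are immediately controlled by \eqref{eq:assumptions_stabgeneral}; the pieces carrying a bounded factor multiplied by $u$, such as $\int_{\partial\om} u\,\langle x,\nu\rangle$ and $\int_{\partial\om} u\,\langle x-z,\nu\rangle$, are bounded by $d_\Om\int_{\partial\om}|u|\le C\psi(\eta)$ via the first line of \eqref{eq:assumptions_stabgeneral}; and mixed pieces such as $\int_{\partial\om}\langle x-z,\na u\rangle u_\nu$ are reduced to $\int_{\partial\om}|\na u|^2$ by Cauchy--Schwarz together with $|u_\nu|\le|\na u|$. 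The genuinely new cross term, $-\frac{2}{N}\int_{\partial\om} u\,\langle\na^2 u\,(x-z),\nu\rangle\,dS_x$, which arises from expanding $\langle\na^2 h\,\na h,\nu\rangle u$, is precisely the quantity whose bound is the extra hypothesis \eqref{eq:assumption_further_stabpseudodistance}, and this is the reason that assumption is imposed.

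The most delicate term is the ``constant'' contribution $c^2\int_{\partial\om}\bigl(\langle x,\nu\rangle/N-u_\nu\bigr)\,dS_x$. Using the divergence theorem exactly as in the proof of Corollary \ref{cor:Identitydoposovradeterminazione}, it rewrites as $-c^2\bigl(|\om|+\int_{\partial\om}u_\nu\,dS_x\bigr)$. The second summand is bounded by $|\partial\om|^{1/2}\bigl(\int_{\partial\om}|\na u|^2\bigr)^{1/2}$ via Cauchy--Schwarz, and I anticipate that the main obstacle is verifying that both the factor $|\om|$ and the geometric quantity $|\partial\om|$ can be absorbed into $\psi(\eta)$, consistent with the two admissible choices $\eta=|\partial\om|$ or $\eta=\bar d_\om$. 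Once these scalar comparisons are handled, collecting all contributions yields \eqref{eq:stellanuovamiserve} with $C$ depending only on $N$, $r_i$, $d_\Om$.
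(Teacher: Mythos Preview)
Your approach coincides with the paper's: both invoke Theorem~\ref{thm:stabestimateforpsudodistance}, expand the extra boundary integrals $|\na h|^2 u_\nu$ and $u\,\langle \na^2 h\,\na h,\nu\rangle$ via $\na h=(x-z)/N-\na u$ and $\na^2 h=I/N-\na^2 u$, and match each resulting piece with one of the hypotheses in~\eqref{eq:assumptions_stabgeneral} or~\eqref{eq:assumption_further_stabpseudodistance}. The one substantive difference is your treatment of the $c^2$ term: the paper does \emph{not} rewrite it via the divergence theorem as $-c^2\bigl(|\om|+\int_{\pa\om}u_\nu\bigr)$; it simply translates so that the origin lies in $\Om$ and uses $|\langle x,\nu\rangle|\le d_\Om$ pointwise, exactly as in the proof of Theorem~\ref{thm:stability_radii}, which is the template for all the curly-brace terms. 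Your detour is unnecessary and introduces the volume $|\om|$, a quantity not otherwise present.

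The concern you flag---that factors like $|\pa\om|$ (arising from $c^2\int_{\pa\om}\langle x,\nu\rangle/N\,dS_x$ or, via Cauchy--Schwarz, from $\int_{\pa\om}|\na u|\,dS_x$) are not literally among the quantities bounded by $\psi(\eta)$---is legitimate, and the paper's terse proof is equally silent on it: it records the bound $\frac{d_\Om^2}{N^2}\int_{\pa\om}|\na u|\,dS_x$ on the first $h$-term and then declares that ``the proof follows.'' In the actual applications (Theorems~\ref{JAHS334} and~\ref{thm:blowup}) one takes $\psi(\eta)$ proportional to $\eta=|\pa\om|$, so these stray geometric factors are harmless there. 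For a fully general $\psi$, one closes the argument by noting that assumption~\eqref{A4} yields an a priori bound on $|\pa\om|$ in terms of $N$, $r_i$, $d_\Om$ (a tubular-neighborhood volume count: the exterior balls of radius $r_i$ along $\pa\om$ are disjoint inside a set of diameter $\le d_\Om+2r_i$), bounding $c$ through~\eqref{eq:value_c}, and absorbing the regime $\psi(\eta)\ge 1$ by the trivial upper bound on the left-hand side of~\eqref{eq:stellanuovamiserve}.
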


\begin{proof}
In light of~\eqref{DEh}, \eqref{quadratic}, and~\eqref{u9iwhschhss}, we see that
$$
|\na h|^2 u_\nu= \left\lbrace \left(\frac{| x-z |}{N} \right)^2  - \frac{2}{N} <  (x-z) , \na u> + |\na u|^2 \right\rbrace u_\nu 
$$
and
\begin{eqnarray*} &&
< \na^2 h \na h , \nu >\\& =&\,
< \left( \frac{1}{N} I - \na^2 u \right) \left( \frac{ (x-z) }{N} - \na u \right) , \nu> 
\\&=&\,
\frac{1}{N} < \frac{ (x-z) }{N} - \na u , \nu> - < \na^2 u \left( \frac{ (x-z) }{N}
- \na u \right) , \nu >\\
&=&\,
\frac{1}{N^2} < (x-z) , \nu> - \frac{1}{N} < \na u , \nu> -
\frac{1}{N} < \na^2 u \, (x-z) , \nu > + < \na^2 u \na u , \nu >.
\end{eqnarray*}
As a consequence,
$$
\left| \int_{\pa\om} |\na h|^2 u_\nu \, dS_x \right| 
\le \frac{d_\Om^2}{N^2} \int_{\pa\om} | \na u | \, dS_x + \frac{2 d_\Om}{N} 
\int_{\pa\om} |\na u|^2  \, dS_x + \left| \int_{\pa\om} | \na u|^2 u_\nu \, dS_x \right| 
$$
and
\begin{eqnarray*}
&&\left| \int_{\pa\om} u < \na^2 h \na h , \nu > \, dS_x \right|\\
&\le& 
\frac{d_\Om}{N^2}  \int_{\pa\om} |u | \, dS_x  + \frac{1}{N} \left| 
\int_{\pa\om} u \, u_\nu \, dS_x \right| \\&&\qquad+ 
\frac{1}{N} \left| \int_{\pa\om} u \,
< \na^2 u \, (x-z) , \nu > \, dS_x \right| + \left| 
\int_{\pa\om} u \,  < \na^2 u \na u , \nu > \, dS_x \right| .
\end{eqnarray*}
Hence, 
the proof follows from the last two formulas, Theorem~\ref{thm:stabestimateforpsudodistance}, \eqref{eq:assumptions_stabgeneral}, and \eqref{eq:assumption_further_stabpseudodistance}.
\end{proof}

By means of Lemma \ref{lem:relationasymmetrypseudodistance}, from Theorem \ref{thm:stab_pseudodistance} we also obtain
a quantitative rigidity result for the
asymmetry~\eqref{eq:asymmetryallaFraenkel}:

\begin{thm}[General stability result for an asymmetry]\label{thm:stab_Asymmetry}
Let $u \in C^2 (\ol{\Om} \setminus \om)$ satisfy \eqref{eq:problem} and \eqref{eq:overdetermination},
and suppose that~$u \le 0$ on $\pa \om$.
Let assumptions~\eqref{A3} and~\eqref{A4} be verified, and $z$ be as in~\eqref{eq:choicez_baricenter}.

If $\psi:[0,\infty)\to[0,\infty)$ is a continuous function vanishing at $0$ such
that~\eqref{eq:assumptions_stabgeneral} and~\eqref{eq:assumption_further_stabpseudodistance} hold true,
then the asymmetry 
defined in~\eqref{eq:asymmetryallaFraenkel} satisfies
\begin{equation}\label{eq:dithmstab_asymmetry}
\frac{| \Om \De B_{Nc}(z) |}{| B_{Nc}(z) |} \le C \, \psi(\eta)^{1/2},
\end{equation}
where $C$ is a constant depending on $N$, $r_i$, $d_\Om$, and $c$.
\end{thm}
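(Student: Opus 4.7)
The proof is essentially a one-line composition of two earlier results, and the plan is to chain them. First, I would invoke Theorem~\ref{thm:stab_pseudodistance}: under the standing assumptions on $u$ and on $\Omega\setminus\ol{\om}$, together with the bounds \eqref{eq:assumptions_stabgeneral} and \eqref{eq:assumption_further_stabpseudodistance}, that theorem directly yields
\[
\int_{\Ga}\left|\frac{|x-z|}{N}-c\right|^{2}dS_{x}\,\le\, C_{1}\,\psi(\eta),
\]
with $C_{1}$ depending only on $N$, $r_i$, $d_{\Om}$.

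Next, I would check that Lemma~\ref{lem:relationasymmetrypseudodistance} applies to $\Om$. Its hypotheses require $\Om$ to be a bounded Lipschitz domain satisfying the uniform interior sphere condition of some radius. Both are guaranteed in the present setting: by \eqref{noWea} the outer boundary $\Ga$ is of class $C^{2,\al}$, hence Lipschitz, and the uniform interior sphere condition on $\Ga$ is part of \eqref{A4} (in particular, the interior radius $r_i$ works). Applying the lemma produces a constant $C_{2}$ depending only on $N$, $r_{i}$, and $c$ such that
\[
\frac{|\Om\De B_{Nc}(z)|}{|B_{Nc}(z)|}\,\le\, C_{2}\left[\int_{\Ga}\left|\frac{|x-z|}{N}-c\right|^{2}dS_{x}\right]^{1/2}.
\]

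Combining the two displayed inequalities and setting $C:=C_{2}\,C_{1}^{1/2}$ yields \eqref{eq:dithmstab_asymmetry} with a constant depending only on $N$, $r_{i}$, $d_{\Om}$, and $c$, as claimed. There is no real obstacle in the argument itself; the substantive work has already been discharged in the quantitative pseudodistance estimate (Theorem~\ref{thm:stab_pseudodistance}), which rests on the Rellich--Pohozaev-type identity of Corollary~\ref{cor:Identitydoposovradeterminazione} and on the trace/Poincaré chain of Section~\ref{sec:Some estimates}, and in the geometric comparison of asymmetry with the spherical $L^{2}$-pseudodistance encoded in Lemma~\ref{lem:relationasymmetrypseudodistance}. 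The only point worth flagging in the write-up is the verification that the hypotheses of Lemma~\ref{lem:relationasymmetrypseudodistance} are inherited from \eqref{A3}, \eqref{A4}, and \eqref{noWea}, so that the uniform interior sphere radius serving the lemma is the same $r_{i}$ appearing in the final constant.
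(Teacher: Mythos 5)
Your proof is correct and matches the paper's own route exactly: the paper derives Theorem~\ref{thm:stab_Asymmetry} precisely by composing the pseudodistance bound of Theorem~\ref{thm:stab_pseudodistance} with the comparison Lemma~\ref{lem:relationasymmetrypseudodistance}. Your verification that $\Omega$ satisfies the hypotheses of that lemma (Lipschitz boundary via~\eqref{noWea}, uniform interior sphere condition of radius $r_i$ on $\Gamma$ via~\eqref{A4}) is the right and only point that needs checking, and you handle it correctly.
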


\begin{rem}\label{rem:togliere c quandoabbiamosfera}
{\rm
The dependence on $c$ of the
constant in~\eqref{eq:dithmstab_asymmetry}
can be dropped by exploiting suitable bounds for it. Indeed,
on the one hand, putting together \eqref{eq:overdetermination} and \eqref{597569865} one obtains the lower bound
\begin{equation}\label{eq:00000lowerboundcconsferainterna}
c \ge \frac{r_i}{N} .
\end{equation}
On the other hand, from the expression in \eqref{eq:value_c} one can obtain an upper bound for $c$ in terms of $N$ and $d_\Om$, when $\eta$ is small enough. More precisely,
formula~\eqref{eq:value_c} implies that
\begin{equation}\label{eq:disuguaglianze per upper bound c:0} 
c|\Gamma|= |\Om|-|\omega|-\int_{\partial\omega} u_\nu\,dS_x\le
\frac{ |\Om|-|\omega|}2, \quad \text{if $\eta$ is small enough,}
\end{equation}
thanks to~\eqref{eq:assumptions_stabgeneral}.

Moreover, exploiting the classical isoperimetric inequality
and~\eqref{eq:inequality_volume_diameter}, one sees that
\begin{equation}\label{eq:disuguaglianze per upper bound c}
\frac{|\Om| - | \om | }{| \Ga|} \le \frac{|\Om|}{|\Ga|} \le \frac{1}{N} \left( \frac{|\Om| }{| B_1|} \right)^{\frac{1}{N}} \le \frac{d_\Om}{2 N} .
\end{equation}
Putting together~\eqref{eq:disuguaglianze per upper bound c:0}
and~\eqref{eq:disuguaglianze per upper bound c} one obtains
that
\begin{equation}\label{eq:STARSTARSTARSTAR}
c\le \frac{d_\Om}{4 N} , \quad \text{if $\eta$ is small enough}.
\end{equation}
}
\end{rem}

We can now obtain a quantitative symmetry result by assuming a $C^2$-bound of the solution along~$\partial\omega$:

\begin{thm}\label{JAHS334}
Let $u \in C^2 (\ol{\Om} \setminus \om)$ satisfy \eqref{eq:problem} and \eqref{eq:overdetermination},
and suppose that~$u \le 0$ on $\pa \om$.
Let assumptions \eqref{A3} and \eqref{A4} be verified, and~$z$ be as in~\eqref{eq:choicez_baricenter}.

If there exists $K >0$ such that 
\begin{equation}\label{eq:assumptionC2norm}
\nr u \nr_{C^2 (\pa \om)} \le K ,
\end{equation}
then
\begin{equation}\label{eq:pseudodistancestabconnormaC^2}
\int_{\Ga}  \left| \frac{|x-z|}{N} - c \right|^2 dS_x \le C | \pa \om |,
\end{equation}
where $C$ is a positive constant depending on $N$, $r_i$, $d_\Om$, and $K$.

Also, 
%
%
it holds that
\begin{equation}\label{eq:asymmetrystabconnormaC^2}
\frac{| \Om \De B_{Nc}(z) |}{| B_{Nc}(z) |} \le C | \pa \om |^{1/2},
\end{equation}
where $C$ is a positive constant depending on $N$, $r_i$, $d_\Om$, and $K$.

Moreover, if $z \in \Om$, we have that
\begin{equation}
\label{eq:stabconnormaC^2}
\rho_e-\rho_i\le C |\pa \om|^{\tau_N/2},
\end{equation}
where $\tau_N$ are as in Theorem~\ref{thm:serrin-W22-stability} and $C$ is a positive
constant depending on $N$, $r_i$, $d_\Om$, and~$K$.
\end{thm}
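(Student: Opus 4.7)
The plan is to deduce Theorem~\ref{JAHS334} as a direct corollary of the three ``general'' stability results, namely Theorems~\ref{thm:stab_pseudodistance}, \ref{thm:stab_Asymmetry}, and \ref{thm:stability_radii}, applied with the simple linear choice $\psi(\eta):=c_0\,\eta$, $\eta:=|\pa\om|$, for a constant $c_0$ depending only on $N$, $d_\Om$, and $K$. The first task is therefore to verify that the hypotheses~\eqref{eq:assumptions_stabgeneral} and~\eqref{eq:assumption_further_stabpseudodistance} hold with this choice.

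To do so, I would use \eqref{eq:assumptionC2norm} to get the pointwise bounds $|u|,|\na u|,|\na^2 u|\le K$ on $\pa\om$. A term-by-term inspection of the five quantities in~\eqref{eq:assumptions_stabgeneral} then yields bounds of the form $K^{\alpha}|\pa\om|$, each with $\alpha\le 3$. For the extra integrand in~\eqref{eq:assumption_further_stabpseudodistance}, I would estimate
\[
\left|\int_{\pa\om} u\,\langle \na^2 u\,(x-z),\nu\rangle\,dS_x\right|\le K^2 \int_{\pa\om}|x-z|\,dS_x .
\]
After translating the origin to any fixed point of $\Om$, one has $|x|\le d_\Om$ for $x\in\pa\om$; moreover, from the very definition~\eqref{eq:choicez_baricenter} and the $C^0$-bound on $u$, it follows that $|z|\le d_\Om+NK|\pa\om|/|\Om\setminus\ol{\om}|$. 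Hence, after restricting to the regime where $|\pa\om|$ is small compared to $|\Om\setminus\ol{\om}|$, the quantity $|x-z|$ is bounded by a constant depending only on $N$, $d_\Om$, $K$, and~\eqref{eq:assumption_further_stabpseudodistance} is verified with $\psi(\eta)=c_0\eta$.

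With this verification in hand, \eqref{eq:pseudodistancestabconnormaC^2} follows at once from Theorem~\ref{thm:stab_pseudodistance}, and \eqref{eq:stabconnormaC^2} follows from Theorem~\ref{thm:stability_radii} under the assumption $z\in\Om$. For \eqref{eq:asymmetrystabconnormaC^2}, Theorem~\ref{thm:stab_Asymmetry} produces the desired inequality but with a constant depending on $c$; I would eliminate the $c$-dependence by invoking Remark~\ref{rem:togliere c quandoabbiamosfera}, which provides the two-sided bound $r_i/N\le c\le d_\Om/(4N)$ (the upper bound being valid when $|\pa\om|$ is small, exactly the regime we are in).

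The only remaining point is the complementary regime in which $|\pa\om|$ is \emph{not} small. Here all three target inequalities are trivial after enlarging $C$: $\rho_e-\rho_i\le d_\Om$ by~\eqref{eq:Ri}; the pseudo-distance is bounded by a polynomial in $d_\Om$ using $|x-z|/N\le d_\Om/N$ and $c\le d_\Om/(4N)$; and the asymmetry is bounded by $1+|\Om|/|B_{Nc}(z)|$, which is controlled by $d_\Om$ and $r_i$. So in both regimes the claimed inequalities hold. The only genuine difficulty lies in the bookkeeping of $|z|$ when $|\pa\om|$ is comparable to $|\Om\setminus\ol{\om}|$, but this is precisely the range where the trivial bounds suffice, so no analytic obstruction arises; the substantive work is packaged in the preceding three theorems.
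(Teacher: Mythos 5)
Your overall strategy coincides with the paper's: apply Theorems~\ref{thm:stab_pseudodistance}, \ref{thm:stab_Asymmetry}, \ref{thm:stability_radii} with $\psi(\eta)$ a linear function of $\eta=|\pa\om|$, verify the hypotheses~\eqref{eq:assumptions_stabgeneral} and~\eqref{eq:assumption_further_stabpseudodistance} from the $C^2$-bound, and remove the $c$-dependence in the asymmetry estimate via the two-sided bound on $c$ from Remark~\ref{rem:togliere c quandoabbiamosfera}. That much is fine.

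However, there is a genuine gap in the deduction of~\eqref{eq:stabconnormaC^2}. You write that it ``follows from Theorem~\ref{thm:stability_radii},'' but the constant in~\eqref{eq:generalstability} depends on $M=\max_{\ol{\Om^c_{r_i}}}|\na u|$, whereas the claim in Theorem~\ref{JAHS334} requires dependence on $K$ (and $N$, $r_i$, $d_\Om$) only. Nothing in your argument shows that $M$ is controlled by these quantities, and this is not automatic: a priori, the interior gradient of $u$ could be large even if $u$ is small on $\pa\om$. The paper bridges this exactly: since $|\na u|^2$ is subharmonic on $\Om\setminus\ol\om$ (its Laplacian equals $2|\na^2 u|^2\ge 0$ because $\De u$ is constant), $|\na u|$ attains its maximum on $\Ga\cup\pa\om$, giving $\max_{\ol\Om\setminus\om}|\na u|\le\max\{c,K\}$; then $c$ is bounded above in terms of $N$, $d_\Om$, $r_i$, $K$ using~\eqref{eq:value_c}, the isoperimetric inequality, and the inradius bound $|\Om|\ge|B_1|r_i^N$ (valid for $|\pa\om|<1$), and the regime $|\pa\om|\ge1$ is handled trivially by $\rho_e-\rho_i\le d_\Om$. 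You should insert this step before invoking Theorem~\ref{thm:stability_radii}, otherwise the constant you produce is not of the claimed form.

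A smaller remark: your bound on $|x-z|$ uses $|\Om\setminus\ol\om|$ in the denominator. To make $c_0$ depend only on $N$, $r_i$, $d_\Om$, $K$ you should note that the interior sphere condition forces $|\Om\setminus\ol\om|\ge|B_1|r_i^N$, so $r_i$ should be added to the list of dependencies for $c_0$. This is minor and easily fixed, but it is worth stating explicitly.
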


\begin{proof}
We notice that, by~\eqref{eq:assumptionC2norm}, we have that the assumptions
in~\eqref{eq:assumptions_stabgeneral} and \eqref{eq:assumption_further_stabpseudodistance} are satisfied with
$$\psi(|\pa \om|) :=
\max \left\lbrace K, K^3 \right\rbrace |\pa \om|,$$ and so we are in the position
of applying Theorems~\ref{thm:stability_radii}, ~\ref{thm:stab_pseudodistance}, and ~\ref{thm:stab_Asymmetry}, thus obtaining the desired estimates.
\medskip

Notice that the constant in \eqref{eq:stabconnormaC^2} does not depend on $M$
(as defined in~\eqref{bound-gradient}), differently from that appearing in \eqref{eq:generalstability}.
Indeed, we claim that,
when $|\pa \om|<1$, 
\begin{equation}\label{iut87686hrehg}
\max_{ \ol{\Om} \setminus \om } | \na u|\le C,
\end{equation}
for some positive constant~$C$
depending on $N$, $d_\Om$, $r_i$, and $K$.
We point out that, since~$M  \le \max_{ \ol{\Om} \setminus \om } | \na u|$, the estimate in~\eqref{iut87686hrehg}
provides a bound for~$M$ in terms
of~$N$, $d_\Om$, $r_i$, and $K$.
Hence, we now focus on the proof of~\eqref{iut87686hrehg}.

For this, we observe that,
since $|\na u|$ attains its maximum on $\Ga \cup \pa \om$,
recalling~\eqref{eq:overdetermination}
and~\eqref{eq:assumptionC2norm},
we have that
\begin{equation}\label{eq:eqeqeqeqeqeq}
\max_{ \ol{\Om} \setminus \om } | \na u| \le \max \left\lbrace c, \, K \right\rbrace .
\end{equation}
As a result, to obtain the desired estimate
in~\eqref{iut87686hrehg}, it remains
to find an upper bound for $c$ depending on $N$, $d_\Om$, $r_i$, and $K$. 

For this, we notice that,
by \eqref{eq:value_c} and \eqref{eq:assumptionC2norm},
\begin{equation*}
c= \frac{|\Om \setminus \ol{ \om }|}{|\Ga|} - \frac{1}{|\Ga|} \int_{\pa \om} u_\nu \, dS_x 
\le \frac{|\Om |}{|\Ga|} + \frac{K}{|\Ga|} |\pa \om| ,
\end{equation*}
and hence 
\begin{equation}\label{eq:00000000011111111}
c \le  \frac{|\Om |}{|\Ga|} + \frac{K}{|\Ga|} \quad \text{if} \quad |\pa \om| <1  .
\end{equation}
On the one hand, by \eqref{eq:disuguaglianze per upper bound c} we already know that
\begin{equation}\label{eq:00000000011111222}
\frac{|\Om |}{|\Ga|} \le \frac{d_\Om}{2 N} .
\end{equation}
On the other hand, by combining the inequality
$$
|\Om| \ge | B_1 | r_i^{N},
$$
that holds true since a ball of radius $r_i$ is surely contained in $\Om$, with the classical isoperimetric inequality $| \Ga| \ge N |B_1|^{1/N} |\Om|^{(N-1)/N}$,
we get that
\begin{equation}\label{eq:00000001111122233}
\frac{K}{| \Ga |} \le \frac{1}{N |B_1|} \frac{K}{r_i^{N-1}}.
\end{equation}
Putting together \eqref{eq:00000000011111111}, \eqref{eq:00000000011111222},
and \eqref{eq:00000001111122233}, we find the desired explicit upper bound for $c$:
\begin{equation}\label{eq:000upperboundperc}
c \le \frac{d_\Om}{2 N} + \frac{1}{N |B_1|} \frac{K}{r_i^{N-1}}  \quad \text{if} \quad |\pa \om| <1  .
\end{equation}
In turn, this and \eqref{eq:eqeqeqeqeqeq} give that
$$\max_{ \ol{\Om} \setminus \om } | \na u| \le \max \left\lbrace \frac{d_\Om}{2 N} + \frac{1}{N |B_1|} \frac{K}{r_i^{N-1}} , \, K \right\rbrace \quad \text{if} \quad |\pa \om| <1 ,$$
that is the desired estimate in~\eqref{iut87686hrehg}.

The estimate in~\eqref{iut87686hrehg} proves that, if $|\pa \om| <1$, \eqref{eq:stabconnormaC^2} holds true (with $C$ not depending on $M$). On the other hand, if $|\pa \om| \ge 1$, \eqref{eq:stabconnormaC^2} trivially holds true with $C:=d_\Om$, being $\rho_e - \rho_i \le \rho_e \le d_\Om$.
\medskip

Notice also that \eqref{eq:asymmetrystabconnormaC^2} is a global estimate in which the constant does not depend  on $c$, differently from that appearing in \eqref{eq:dithmstab_asymmetry}. 

Indeed, if $|\pa \om| <1$, we can remove the dependence of
the constant on~$c$ thanks to the bounds
in~\eqref{eq:00000lowerboundcconsferainterna}
and~\eqref{eq:000upperboundperc}.

This proves that, if $| \pa \om | <1$, \eqref{eq:asymmetrystabconnormaC^2} holds true with $C$ not depending on $c$. On the other hand, when $|\pa \om| \ge 1$, \eqref{eq:asymmetrystabconnormaC^2} trivially holds true with $C:= \left( \frac{ d_\Om }{ 2 r_i} \right)^N +1 $, being
$$
\frac{|\Om \De B_{Nc}|}{|B_{Nc}|} \le \frac{|\Om|}{|B_{Nc}|} + 1 \le \left( \frac{d_\Om}{2 r_i} \right)^N + 1 ,
$$
where we have used \eqref{eq:00000lowerboundcconsferainterna} and \eqref{eq:inequality_volume_diameter}
in the last inequality.

These observations complete the proof of
Theorem~\ref{JAHS334}.
\end{proof}

We observe that our main result in Theorem~\ref{MAIN}
is now a simple consequence of
Theorem~\ref{JAHS334}.
\medskip

Another instance in which the assumptions of Theorems~\ref{thm:stability_radii}, ~\ref{thm:stab_pseudodistance}, and \ref{thm:stab_Asymmetry}, are surely satisfied is the following.
Notice that in high dimensions (i.e., when~$N>4$) the following result
allows~$u$ to blow up on~$\pa \om$ as either the
perimeter or the diameter tends to zero.

\begin{thm}\label{thm:blowup}
Let $u \in C^2 (\ol{\Om} \setminus \om)$ satisfy \eqref{eq:problem} and \eqref{eq:overdetermination},
and suppose that~$u \le 0$ on $\pa \om$.
Let assumption \eqref{A4} be verified, and~$z$ be as in~\eqref{eq:choicez_baricenter}.  
Let $\om$ be the union of finitely many
disjoint balls
of radius~$\ve$ and assume that 
$$
\|u\|_{L^\infty(\pa\om)}+\ve\|\nabla u\|_{L^\infty(\pa\om)}+
\ve^2\|\nabla^2 u\|_{L^\infty(\pa\om)} = o(\ve^{ \frac{4-N}{3} })
.$$
Then, 
\begin{equation*}
\int_{\Ga}  \left| \frac{|x-z|}{N} - c \right|^2 dS_x  \quad \text{ and } \quad \, \frac{| \Om \De B_{Nc}(z) |}{| B_{Nc}(z) |} 
\end{equation*}
are as small as we wish for small~$\ve$.

Furthermore, if in addition~$z\in \Om$, then also~$
\rho_e-\rho_i$ is as small as we wish for small~$\ve$.
\end{thm}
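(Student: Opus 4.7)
The plan is to apply the three general stability theorems \ref{thm:stability_radii}, \ref{thm:stab_pseudodistance} and \ref{thm:stab_Asymmetry} directly, after verifying that the six boundary integrals on the left-hand sides of \eqref{eq:assumptions_stabgeneral} and \eqref{eq:assumption_further_stabpseudodistance} can all be dominated by a single continuous function $\psi$ of either $\eta=|\pa\om|$ or $\eta=\bar d_\om$, vanishing at the origin. Since $\om$ is a finite disjoint union of $\varepsilon$-balls, assumption \eqref{A3} is automatic, $\bar d_\om=2\varepsilon$, and $|\pa\om|=k_0\,N|B_1|\varepsilon^{N-1}$ for some fixed integer $k_0$. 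In particular, $|\pa\om|$ and $\bar d_\om$ are continuous monotone functions of $\varepsilon$ tending to $0$ as $\varepsilon\to 0$, so it suffices to produce bounds in terms of $\varepsilon$ and then re-express them in terms of $\eta$.

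Set $U:=\|u\|_{L^\infty(\pa\om)}$, $U':=\|\na u\|_{L^\infty(\pa\om)}$, $U'':=\|\na^2 u\|_{L^\infty(\pa\om)}$. The standing assumption rewrites as
\begin{equation*}
U=o(\varepsilon^{(4-N)/3}),\qquad U'=o(\varepsilon^{(1-N)/3}),\qquad U''=o(\varepsilon^{-(N+2)/3}).
\end{equation*}
I then bound each of the six integrals by a product of $L^\infty$-norms times $|\pa\om|$ (using also $|x-z|\le d_\Om$ for the last one, after translating $\Om$ if needed so that $0\in\Om$). The tightest constraint comes from the cubic term
\begin{equation*}
\Bigl|\int_{\pa\om}|\na u|^2 u_\nu\,dS_x\Bigr|\le (U')^3|\pa\om|=o(\varepsilon^{1-N})\cdot O(\varepsilon^{N-1})=o(1),
\end{equation*}
which is precisely where the exponent $(4-N)/3$ in the hypothesis originates (its cube absorbs the $\varepsilon^{N-1}$ coming from $|\pa\om|$). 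The other ``triple'' term is $UU'U''|\pa\om|$, whose total power of $\varepsilon$ is $\tfrac{(4-N)+(1-N)-(N+2)}{3}+(N-1)=0$, hence also $o(1)$. The remaining four integrals have strictly positive residual powers of $\varepsilon$ and are consequently much easier to control.

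With these pointwise-in-$\varepsilon$ bounds in hand, I define $\psi:[0,\infty)\to[0,\infty)$ by setting $\psi(\eta)$ equal to the maximum of the six bounds, expressed as a function of $\eta=|\pa\om|$ via the monotone relation $\varepsilon=(\eta/(k_0 N|B_1|))^{1/(N-1)}$; by construction $\psi$ is continuous and $\psi(\eta)\to 0$ as $\eta\to 0$. Theorems \ref{thm:stab_pseudodistance} and \ref{thm:stab_Asymmetry} then immediately yield that the pseudodistance and the asymmetry in \eqref{INTRO:eq:pseudodistance} and \eqref{INTRO:eq:asymmetry} are as small as we wish for small~$\varepsilon$, and, if in addition $z\in\Om$, Theorem~\ref{thm:stability_radii} gives the same conclusion for $\rho_e-\rho_i$. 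The only slightly delicate point is the bookkeeping for the cubic term, which is resolved by the sharp choice of exponent $(4-N)/3$; everything else is a uniform application of H\"older's inequality on $\pa\om$.
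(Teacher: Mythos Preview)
Your proposal is correct and follows precisely the approach the paper intends: the paper presents this theorem immediately after Theorems~\ref{thm:stability_radii}, \ref{thm:stab_pseudodistance}, and~\ref{thm:stab_Asymmetry} with the remark that it is ``another instance in which the assumptions'' of those results ``are surely satisfied,'' and gives no further proof. You have simply carried out that verification in detail, correctly identifying that the two cubic terms $(U')^3|\pa\om|$ and $UU'U''|\pa\om|$ are the critical ones (each with zero residual power of~$\ve$), which is exactly what forces the exponent $(4-N)/3$ in the hypothesis.

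One minor remark: the phrase ``after translating $\Om$ if needed so that $0\in\Om$'' is misplaced when bounding $|x-z|$, since translation moves $x$ and $z$ together; what you actually need is that $z$ stays in a bounded region as $\ve\to 0$, which follows from~\eqref{eq:choicez_baricenter} because the correction term $N\int_{\pa\om}u\,\nu\,dS_x/|\Om\setminus\ol\om|$ is $O(U|\pa\om|)=o(\ve^{(2N+1)/3})\to 0$, so $z$ is close to the barycenter of~$\Om$ and $|x-z|$ is bounded by, say, $2d_\Om$ for small~$\ve$.
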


\section{The case of general domains and proofs of Theorems~\ref{MAIN-DUE} and~\ref{MAIN-TRE}}\label{sec:relaxing assumptions}

As mentioned in the Introduction, analogous stability
results can be obtained by weakening the assumptions in~\eqref{A3} and~\eqref{A4}.

\medskip

Subsection \ref{subsec:John} is devoted to 
the case in which $\Om \setminus \ol{\om}$ is a John domain. 
We extend the stability estimates for the spherical pseudodistance defined in \eqref{eq:pseudodistance} and for the asymmetry defined in \eqref{eq:asymmetryallaFraenkel} -- i.e., Theorems \ref{thm:stab_pseudodistance}, \ref{thm:stab_Asymmetry}, and their corresponding consequences in Theorem \ref{JAHS334} --,
when \eqref{A3} and \eqref{A4}
are dropped and replaced by the weaker assumptions \eqref{A5JOHN}, \eqref{A3bis}; that is,
\begin{equation}\label{eq:questamessadopoassunzionesoloperpseudodistanza}
{\mbox{when $\Om \setminus \ol{\om}$ is a bounded {\it John domain} of {\it finite perimeter}.}}
\end{equation}
%
%
We also show that the pointwise results of Theorem \ref{thm:stability_radii} and its corresponding consequences
in Theorem~\ref{JAHS334}
can be obtained when \eqref{A3} and \eqref{A4}
are replaced with the weaker assumptions \eqref{A5JOHN} \eqref{A3bis}, \eqref{A4bis},
that is,
\begin{equation}\label{otto.1}
\begin{split}
&{\mbox{when $\Om \setminus \ol{\om}$ is a bounded {\em John domain} of {\em
finite perimeter}}}\\&{\mbox{which satisfies the uniform {\em interior sphere condition} on the {\em
external boundary},}}
\end{split}
\end{equation}
at the cost of getting a worse stability exponent $\tau_N$.

All the generalizations presented in Subsection \ref{subsec:John} are obtained by using the same
choice \eqref{eq:choicez_baricenter} for the point $z$. As a particular case of those generalizations, we obtain Theorem \ref{MAIN-DUE}.

\medskip

In Subsection \ref{subsec:differentchoicesofz} we show how a
different choice of the point $z$ allows
to obtain Theorem~\ref{thm:stability_radii} and its corresponding
consequences in Theorem~\ref{JAHS334} in their
full power -- i.e., with~$\tau_N$ given
in Theorem~\ref{thm:serrin-W22-stability} -- under the weaker
assumptions \eqref{A3bis} and \eqref{A4bis}.
We stress that this approach does not need
the assumption \eqref{A5JOHN} that $\Om \setminus \ol{\om}$ is a John domain, requested in the generalizations of Subsection \ref{subsec:John}.
In fact, the set of assumptions on $\Om \setminus \ol{\om}$ in Subsection \ref{subsec:differentchoicesofz} is
\begin{equation}\label{otto.2}
\begin{split}
&{\mbox{$\Om \setminus \ol{\om}$ is a bounded domain of {\em finite perimeter} which
satisfies}}\\&{\mbox{the uniform {\em interior sphere condition} on the {\em external boundary}.}}
\end{split}
\end{equation}

\medskip

We recall that whenever \eqref{eq:overdetermination} is in force, thanks to \eqref{noWea}, the external boundary $\Ga$ is of class $C^{2,\al}$ and $u \in C^{2,\al} \left( \left( \Om \setminus \ol{\om} \right) \cup \Ga \right)$.

\medskip

To deal with sets of finite perimeter, which is common both in~\eqref{otto.1}, \eqref{otto.2}, and \eqref{eq:questamessadopoassunzionesoloperpseudodistanza}, we recall that, thanks to De Giorgi's structure theorem
(see \cite[Theorem 15.9]{Maggi} or \cite{Giusti}), the
assumptions in~\eqref{assumption:regularityuptobordino} and~\eqref{A3bis}
(in the sense explained in Section \ref{NOATZ}) guarantee that the
integral identities proved in Section~\ref{S:1} still hold true, provided that
one replaces~$\pa \om$ with the reduced boundary~$\pa^* \om$ and
agrees to still use $\nu$ to denote the (measure-theoretic)
outer unit normal (see e.g., \cite[Chapter 15]{Maggi}).

We observe that when in particular $\Om \setminus \ol{ \om }$ is of class $C^1$,
that is when~\eqref{A3} is in force,
then~$\pa^* \om = \pa \om$ and the (measure-theoretic)
outer unit normal coincides with the classical notion of outer unit normal
(see~\cite[Remark~15.1]{Maggi}).

Moreover, in the setting of assumption~\eqref{A3bis},
the surface measure $|\pa \om|$
has to be replaced with the {\it perimeter} of $\om$.
We recall indeed that 
the perimeter of $\om$ equals the $N-1$-dimensional Hausdorff
measure of $\pa^* \om$, denoted by $\cH^{N-1}(\pa^* \om)$
(see \cite[Chapter 4]{Giusti} or \cite[Chapter 15]{Maggi}). 
Of course, when $\pa \om$ is of class $C^1$, as given by~\eqref{A3},
those notions agree (since in that case we have $\pa^* \om = \pa \om$).

%
%

\subsection{Generalizations for John domains and proof of Theorem \ref{MAIN-DUE}}\label{subsec:John}

We first deal with the setting in~\eqref{eq:questamessadopoassunzionesoloperpseudodistanza} and
we obtain the generalizations of Theorems~\ref{thm:stab_pseudodistance} and~\ref{thm:stab_Asymmetry}.
Then, by further assuming \eqref{A4bis} (and hence in the setting \eqref{otto.1}),
we establish the generalizations of Theorem \ref{thm:stability_radii}.
As a consequence, we thus obtain Theorem~\ref{MAIN-DUE}.

\medskip

The formal framework in which we work is the following.
A domain $D$ in $\RR^N$ is a {\it $b_0$-John domain}, with~$b_0 \ge 1$, if each pair of distinct points $a$ and $b$ in $D$ can be joined by a
curve $\ga: \left[0,1 \right] \rightarrow D$ such that
$\ga(0)=a$, $\ga(1)=b$, and
\begin{equation}\label{eq:Johncondition}
\de (\ga(t)) \ge b_0^{-1} \min{ \left\lbrace |\ga(t) - a|, |\ga(t) - b| \right\rbrace  } .
\end{equation}

We emphasize that the class of John domains is huge: it contains Lipschitz domains, but also very irregular domains with fractal boundaries such as, e.g., the Koch snowflake. For more details on John's domains, see~\cite[Section 3.2]{Pog2} or \cites{Ai, MS, NV}, and references therein.
Here, we just notice that, if \eqref{A4} is satisfied then $\Om \setminus \ol{\om}$ is surely a $b_0$-John domain with
$b_0 \le d_\Om /r_i$ (see \cite[(iii) of Remark 3.12]{Pog2}).

As already mentioned in Remark \ref{rem:poincare valgono anche per John}, Lemma~\ref{lem:John-two-inequalities} and Corollary~\ref{cor:JohnPoincareaigradienti} still hold true without the assumption of the uniform interior sphere condition, if $D:=\Om \setminus \ol{ \om}$ is a John domain (see also~\cite[Section 3.2]{Pog2}). In this case, explicit estimates (now depending on the John parameter $b_0$ instead that on $r_i$) of the relevant constants of Lemma~\ref{lem:John-two-inequalities} and Corollary~\ref{cor:JohnPoincareaigradienti} can be found in~\cite[Remark 2.4]{MP3}.
For the reader's convenience we report here the only estimate that we need to conclude our reasoning, that is, 
\begin{equation}\label{eq:stimaesplicitamuperb0John}
\ol{\mu}_{r, p, \al} ( D )^{-1} \le k_{N,\, r, \, p,\, \al} \, b_0^N | D |^{\frac{1-\al}{N} +\frac{1}{r} +\frac{1}{p} } ,
\end{equation}
where $r$, $p$ and~$ \al$ are as in \eqref{eq:conditionHS}.

Now we point out the main changes to perform in this situation
in order to get Theorem \ref{thm:stab_pseudodistance} and its
corresponding consequences in Theorems \ref{JAHS334}
when \eqref{A3} and~\eqref{A4} are dropped and replaced just by \eqref{A3bis}.

We notice that
assumption \eqref{A3} had been exploited only
to allow the use of \eqref{eq:relationdist} in the proof of
Lemma \ref{lem:genericv-trace inequality}. However,
we notice that \eqref{instgr} still holds true without that assumption.

Thus, in order to generalize the stability result of
Theorem~\ref{thm:stab_pseudodistance}, we replace item (ii) of
Lemma~\ref{lem:genericv-trace inequality} with the following result:

\begin{lem}
\label{lem:relaxed assumption_genericv-trace inequality}
Let $\Om \setminus \ol{\om} \subset \RR^N$ be a bounded $b_0$-John domain satisfying \eqref{A3bis}. Let~$u \in C^1 (\ol{\Om} \setminus \om)$ satisfy~\eqref{eq:problem} and \eqref{eq:overdetermination}, and assume that~$u \le 0$ on~$\pa \om$.
Let $v \in C^2 (\ol{\Om} \setminus \om)$ be a harmonic function in $\Om \setminus \ol{\om}$.

If 
\begin{equation}\label{jierety78yndfdjb}
\int_{\Om \setminus \ol{\om} } \na v \, dx = 0,\end{equation}
then it holds that
\begin{eqnarray*}&&
\int_{\Ga} |\na v|^2 \, d\cH^{N-1} \le \frac{2}{c} \left(1+\frac{N}{\ol{\mu}_{2,2, 1 }(\Om \setminus \ol{\om} )^2 } \right)  \int_{\Om \setminus \ol{\om} } (-u) |\na^2 v|^2 dx 
\\&&\qquad\qquad\qquad\qquad-
\frac{1}{c} \int_{\pa^* \om} \left\lbrace |\na v|^2 u_{\nu} - 2 u < \na^2 v \na v , \nu > \right\rbrace \, d\cH^{N-1}.
\end{eqnarray*}
\end{lem}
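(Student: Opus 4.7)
The plan is to rerun the argument of Lemma~\ref{lem:genericv-trace inequality}(ii) with two structural substitutions dictated by the weakened hypotheses. First, in place of the Hopf-type lower bound $u_\nu\ge r_i/N$ on $\Gamma$ used in \eqref{597569865}, the overdetermined condition \eqref{eq:overdetermination} is available and gives directly $u_\nu=c$ on $\Gamma$. Second, in place of the linear lower bound \eqref{eq:relationdist}, only the quadratic estimate \eqref{instgr}, namely $-u(x)\ge \delta(x)^2/(2N)$, survives without the interior sphere condition on $\partial\omega$; accordingly, the Poincar\'e-type inequality must be applied with weight $\alpha=1$ (rather than $\alpha=1/2$), which in turn is available for John domains by Remark~\ref{rem:poincare valgono anche per John}.

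First I would reproduce verbatim the computation leading to \eqref{fieryu458y45hn}: starting from the differential identity \eqref{diffidimpr}, applying the divergence theorem on $\Omega\setminus\overline\omega$, substituting the harmonic function $v_i$ in place of $v$, and summing over $i=1,\dots,N$. The only cosmetic change is that, because \eqref{A3} is dropped in favour of \eqref{A3bis}, the divergence theorem must be invoked in the finite-perimeter form of De Giorgi's structure theorem, so that the boundary integrals over $\partial\omega$ are replaced by integrals of the appropriate traces over $\partial^*\omega$ against $d\mathcal{H}^{N-1}$, with $\nu$ denoting the measure-theoretic outer unit normal (as already discussed in Section~\ref{sec:relaxing assumptions}). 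This yields the identity
\begin{equation*}
c\int_{\Gamma}|\nabla v|^2\,d\mathcal{H}^{N-1}
=\int_{\Omega\setminus\overline\omega}|\nabla v|^2\,dx
+2\int_{\Omega\setminus\overline\omega}(-u)|\nabla^2 v|^2\,dx
-\int_{\partial^*\omega}\!\big\{|\nabla v|^2 u_\nu-2u\langle \nabla^2 v\,\nabla v,\nu\rangle\big\}d\mathcal{H}^{N-1},
\end{equation*}
after using $u_\nu=c$ on $\Gamma$ to simplify the left-hand side.

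Next I would bound the first term on the right-hand side by combining two ingredients. The hypothesis \eqref{jierety78yndfdjb} allows the application of item~(ii) of Corollary~\ref{cor:JohnPoincareaigradienti} with $D:=\Omega\setminus\overline\omega$, $r:=p:=2$, $\alpha:=1$ (these parameters satisfy \eqref{eq:conditionHS}, and the John version of the corollary is granted by Remark~\ref{rem:poincare valgono anche per John}), yielding
\begin{equation*}
\int_{\Omega\setminus\overline\omega}|\nabla v|^2\,dx
\le \overline\mu_{2,2,1}(\Omega\setminus\overline\omega)^{-2}\int_{\Omega\setminus\overline\omega}\delta^2|\nabla^2 v|^2\,dx.
\end{equation*}
Then Lemma~\ref{lem:relationdist}, in the form \eqref{instgr} (which requires only $u\le 0$ on $\partial\omega$ and no regularity of $\partial\omega$), gives $\delta^2\le 2N(-u)$, so $\int \delta^2|\nabla^2 v|^2\,dx\le 2N\int(-u)|\nabla^2 v|^2\,dx$. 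Substituting and dividing by $c>0$ produces exactly the inequality in the statement.

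The main obstacle is purely bookkeeping rather than conceptual: one must check that the divergence theorem, the integration by parts against $v^2$ and against its derivatives, and the identification of the boundary traces all go through on the reduced boundary $\partial^*\omega$ under \eqref{assumption:regularityuptobordino} and \eqref{A3bis}, so that the resulting boundary integrals are meaningful and match those written above. This is exactly the extension already discussed at the beginning of Section~\ref{sec:relaxing assumptions} and is guaranteed by De Giorgi's structure theorem together with the Whitney-type extension of $u$ to a $C^2$ function of $\mathbb{R}^N$ ensured by \eqref{assumption:regularityuptobordino}.
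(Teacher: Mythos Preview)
Your proposal is correct and follows essentially the same route as the paper's proof: both rerun the computation leading to \eqref{fieryu458y45hn}, replace the Hopf bound \eqref{597569865} by the overdetermined condition $u_\nu=c$ on $\Ga$, apply item~(ii) of Corollary~\ref{cor:JohnPoincareaigradienti} with $\al=1$ via Remark~\ref{rem:poincare valgono anche per John}, and then use \eqref{instgr} in place of \eqref{eq:relationdist}. Your discussion of the finite-perimeter bookkeeping (reduced boundary, De~Giorgi's structure theorem, Whitney extension from \eqref{assumption:regularityuptobordino}) matches the preliminary remarks at the start of Section~\ref{sec:relaxing assumptions}.
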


\begin{proof}
We follow the proof of Lemma \ref{lem:genericv-trace inequality}
until \eqref{fieryu458y45hn}. 
Then, by using \eqref{eq:overdetermination}, formula~\eqref{fieryu458y45hn} becomes
\begin{equation}\label{eq:provalemmarelaxedpseudodistance}\begin{split}
c \, \int_{\Ga} |\na v|^2 \, d\cH^{N-1} \le\;& \int_{\Om \setminus \ol{\om} } |\na v|^2 \,dx +
2 \int_{\Om \setminus \ol{\om} } (-u) |\na^2 v|^2\, dx  
\\&\qquad\qquad-
\int_{\pa^* \om} \left\lbrace |\na v|^2 u_{\nu} - 2 u < \na^2 v \na v , \nu > \right\rbrace \, d\cH^{N-1}.
\end{split}\end{equation}

Now, in light of~\eqref{jierety78yndfdjb} and Remark \ref{rem:poincare valgono anche per John},
we can
use item~(ii) in Corollary~\ref{cor:JohnPoincareaigradienti}, applied here
with~$D := \Om \setminus \ol{\om}$, $r:=p:=2$ and $\al:=1$, and we deduce
from~\eqref{eq:provalemmarelaxedpseudodistance} that
\begin{equation*}\begin{split}
c \, \int_{\Ga} |\na v|^2 \, d\cH^{N-1} \le\;& \frac{1}{ \ol{\mu}_{2,2,1}(\Omega\setminus\overline\omega)^{2}}
\int_{\Om \setminus \ol{\om} }\delta^2 |\na^2 v|^2 \,dx +
2 \int_{\Om \setminus \ol{\om} } (-u) |\na^2 v|^2\, dx  
\\&\qquad\qquad-
\int_{\pa^* \om} \left\lbrace |\na v|^2 u_{\nu} - 2 u < \na^2 v \na v , \nu > \right\rbrace \, d\cH^{N-1}.
\end{split}\end{equation*}
{F}rom this and~\eqref{instgr}, one obtains the desired estimate. 
\end{proof}

We remark that the same generalization could be applied -- using item~(i) in
Corollary~\ref{cor:JohnPoincareaigradienti} -- also to item (i) of
Lemma \ref{lem:genericv-trace inequality}, that could be
useful for other choices of $z$.

\medskip

%
%
%
In
the present subsection
%
%
we maintain the same
choice \eqref{eq:choicez_baricenter} for the point $z$, that is,
\begin{equation}\label{eq:PERIMETROFINITOchoicezbarycenter}
z:= \frac{1}{|\Om \setminus \ol{\om}|} \left\lbrace \int_{\Om \setminus \ol{\om}} x \, dx - N \int_{\pa^* \om} u\;\nu \, d \cH^{N-1} \right\rbrace.
\end{equation}

In this setting, thanks to Lemma~\ref{lem:relaxed assumption_genericv-trace inequality},
Theorem~\ref{thm:stabestimateforpsudodistance} is replaced by
the following statement:

\begin{thm}\label{thm:relaxed_stabestimateforpsudodistance}
Let $\Om \setminus \ol{\om} \subset \RR^N$ be a bounded $b_0$-John domain satisfying \eqref{A3bis}. Let~$u \in C^2 (\ol{\Om} \setminus \om)$ satisfy~\eqref{eq:problem} and \eqref{eq:overdetermination}, and assume that~$u \le 0$ on~$\pa \om$. 

Then, with the notation of~\eqref{quadratic} and \eqref{eq:PERIMETROFINITOchoicezbarycenter},
%
%
we have that
\begin{equation}\begin{split}\label{0895654tfbvnblk}&
\int_{\Ga}  \left| \frac{|x-z|}{N} - c \right|^2 \, d\cH^{N-1} \\ \le& 
\frac{1}{c} \left(1+\frac{N}{ \ol{\mu}_{2,2, 1 }(\Om \setminus \ol{\om} )^2 } \right) \Biggl\{ 
 \int_{ \pa^* \om} \biggl[ c^2  \left( \frac{ < x, \nu>}{N} - u_\nu \right)  +
 2u \left( \frac{< x, \nu >}{N} - u_\nu \right)  
\\&\quad+
 u_\nu | \na u |^2 - 2 \frac{< x, \na u >}{N} u_\nu +
 | \na u|^2 \frac{< x , \nu >}{N} + \frac{2}{N} u u_\nu - 2 <\na^2 u \na u , \nu > u 
 \biggr] \, d\cH^{N-1} \, \Biggr\} 
\\&\quad-
\frac{1}{c} \int_{\pa^* \om} \left\lbrace |\na h|^2 u_{\nu} - 2 u < \na^2 h \na h , \nu > 
\right\rbrace \, d\cH^{N-1}
.\end{split}\end{equation}
\end{thm}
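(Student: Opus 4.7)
The plan is to mirror the proof of Theorem \ref{thm:stabestimateforpsudodistance}, systematically replacing $\pa \om$ by the reduced boundary $\pa^* \om$ and $dS_x$ by $d\cH^{N-1}$, and swapping the uniform-interior-sphere trace inequality with its John-domain counterpart just proved in Lemma \ref{lem:relaxed assumption_genericv-trace inequality}. The three ingredients that drove the original proof---the pointwise comparison $\big||x-z|/N - |\na u|\big| \le |\na h|$, the trace inequality for harmonic functions, and the conversion of the weighted bulk integral into a surface integral via the Rellich--Pohozaev computation---remain available in the relaxed setting.

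First, I would verify that the mean-value condition $\int_{\Om \setminus \ol{\om}} \na h \, dx = 0$ still holds with the modified choice \eqref{eq:PERIMETROFINITOchoicezbarycenter}: repeating the computation preceding \eqref{7ujw64rt} with Green's identity for sets of finite perimeter (valid thanks to \eqref{assumption:regularityuptobordino} and De Giorgi's structure theorem, as recalled at the opening of Section \ref{sec:relaxing assumptions}), the point $z$ is designed exactly to cancel the boundary contribution now produced on $\pa^* \om$. Second, since $\na u = u_\nu \nu = c\, \nu$ on $\Ga$ by \eqref{OR2} and \eqref{eq:overdetermination}, and $\na h = (x-z)/N - \na u$, the triangle (Cauchy--Schwarz) inequality gives
$$
\int_{\Ga} \left| \frac{|x-z|}{N} - c \right|^2 d\cH^{N-1} \le \int_{\Ga} |\na h|^2 \, d\cH^{N-1}.
$$
Third, since $h$ is harmonic (because $\De q = 1 = \De u$) and satisfies the gradient mean-value condition just verified, Lemma \ref{lem:relaxed assumption_genericv-trace inequality} applied with $v := h$ bounds the right-hand side by a multiple of $\int_{\Om \setminus \ol{\om}} (-u) |\na^2 h|^2 dx$ plus an explicit surface term on $\pa^* \om$. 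Finally, the Cauchy--Schwarz identity \eqref{eq:CSintermsofh} together with Corollary \ref{cor:Identitydoposovradeterminazione} (reinterpreted on the reduced boundary) converts the weighted bulk integral into the combination of surface integrands on $\pa^* \om$ appearing in \eqref{0895654tfbvnblk}.

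I expect the main obstacle to be essentially bookkeeping: one must check that every integration by parts used in the derivations of \eqref{eq:Pohozaevadhoc} and \eqref{eq:FIdconsovradeterminazione} continues to hold once $\pa \om$ is replaced by $\pa^* \om$. This is guaranteed by the regularity hypothesis \eqref{assumption:regularityuptobordino}, which via Whitney's extension theorem lets us regard $u$ as a $C^2$ function on all of $\RR^N$, together with De Giorgi's structure theorem, which supplies the divergence theorem on sets of finite perimeter with the measure-theoretic outward normal $\nu$. Once these identifications are made, the only structural change in the final bound, compared with Theorem \ref{thm:stabestimateforpsudodistance}, is that the prefactor $(N/r_i)\big(1 + N/(r_i \ol{\mu}_{2,2,1/2}^2)\big)$ is replaced by $(1/c)\big(1 + N/\ol{\mu}_{2,2,1}^{\,2}\big)$, reflecting the use of Hopf-free lower bound \eqref{instgr} in place of \eqref{eq:relationdist} and the John-domain Hardy--Poincaré inequality with $\al = 1$ rather than $\al = 1/2$.
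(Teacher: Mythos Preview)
Your proposal is correct and follows essentially the same route as the paper: the paper does not give an explicit proof but states that the result follows from Lemma~\ref{lem:relaxed assumption_genericv-trace inequality} exactly as Theorem~\ref{thm:stabestimateforpsudodistance} followed from Lemma~\ref{lem:genericv-trace inequality}, and you have accurately unpacked that argument (mean-value condition for $z$, pointwise estimate on $\Ga$, trace inequality with $v:=h$, then \eqref{eq:CSintermsofh} and \eqref{eq:FIdconsovradeterminazione} reinterpreted on $\pa^*\om$). Your remark on the change of prefactor from $(N/r_i)\bigl(1+N/(r_i\,\ol{\mu}_{2,2,1/2}^2)\bigr)$ to $(1/c)\bigl(1+N/\ol{\mu}_{2,2,1}^{\,2}\bigr)$ is also exactly the point.
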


In the same way in which Theorem \ref{thm:stabestimateforpsudodistance}
led to Theorem
\ref{thm:stab_pseudodistance}, now
Theorem \ref{thm:relaxed_stabestimateforpsudodistance} easily leads to
a general stability result for John domains of finite perimeter. In this setting, using Theorem~\ref{thm:relaxed_stabestimateforpsudodistance}
in place of Theorem \ref{thm:stabestimateforpsudodistance}, one obtains
a statement analogous to Theorem \ref{thm:stab_pseudodistance},
with $\pa \om$
replaced with $\pa^* \om$ in \eqref{eq:assumptions_stabgeneral} and
\eqref{eq:assumption_further_stabpseudodistance}, even if assumptions
\eqref{A3} and \eqref{A4} are replaced by \eqref{A3bis}.
The precise result is the following:

\begin{thm}[General stability result for a pseudodistance under relaxed assumptions]
\label{thm:relaxed_stab_pseudodistance}
Let $\Om \setminus \ol{\om}$ a $b_0$-John domain. 
Let $u \in C^2 (\ol{\Om} \setminus \om)$ satisfy \eqref{eq:problem} 
and \eqref{eq:overdetermination},
and suppose that~$u \le 0$ on $\pa \om$.
Let assumption~\eqref{A3bis} be verified, and $z$ be as in \eqref{eq:PERIMETROFINITOchoicezbarycenter}.

If $\psi:[0,\infty)\to[0,\infty)$ is a continuous function vanishing at $0$ such
that
\begin{equation}\label{8567487hjifhew}
\left. \begin{aligned}
\int_{\pa^* \om} (-u) \, d\cH^{N-1} = \int_{\pa^* \om} |u| \, d\cH^{N-1}
\\
\left| \int_{\pa^* \om}  u \, u_\nu \, d\cH^{N-1} \right|
\\
\left| \int_{\pa^* \om}  | \na u|^2 \, u_\nu \, d\cH^{N-1} \right|
\\
\int_{\pa^* \om} | \na u|^2 \, d\cH^{N-1}
\\
\left| \int_{\pa^* \om} < \na^2 u \, \na u , \nu> \, u \, d\cH^{N-1} \right|
\end{aligned}
\right\rbrace
\le \psi(\eta) \quad \text{ with } \eta= \cH^{N-1} (\pa^* \om) \, \text{ or } \, \eta
= \bar d_\om ,
\end{equation}
and
\begin{equation}\label{eq:856richiamo}
\left| \int_{\pa^*\om} u \, < \na^2 u \, (x-z) , \nu > \, d\cH^{N-1} \right| \le \psi(\eta) \quad \text{ with } \eta=
\cH^{N-1} (\pa^* \om) \, \text{ or } \, \eta = \bar d_\om ,
\end{equation}
then
\begin{equation}\label{eq:miserveperultima}
\int_{\Ga}  \left| \frac{|x-z|}{N} - c \right|^2 \, d\cH^{N-1} \le C \, \psi(\eta),
\end{equation}
where $C$ is a constant depending on $N$, $b_0$, $d_\Om$, and $c$.
\end{thm}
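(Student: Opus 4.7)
The plan is to parallel the proof of Theorem~\ref{thm:stab_pseudodistance}, substituting Theorem~\ref{thm:relaxed_stabestimateforpsudodistance} for Theorem~\ref{thm:stabestimateforpsudodistance} throughout, and tracking the dependence of the constant on the John parameter $b_0$ (via Remark~\ref{rem:poincare valgono anche per John}) instead of on $r_i$. Up to a translation we may assume $0\in\Om$, so that $|\langle x,\nu\rangle|\le|x|\le d_\Om$ and $|x-z|\le 2 d_\Om$ on $\pa^*\om$.

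First I would recycle the algebraic expansion already performed in the proof of Theorem~\ref{thm:stab_pseudodistance}. Since $\na h=\frac{x-z}{N}-\na u$ and $\na^2 h=\frac{1}{N}I-\na^2 u$, one has
\[
|\na h|^2 u_\nu = \Big\{\tfrac{|x-z|^2}{N^2}-\tfrac{2}{N}\langle x-z,\na u\rangle+|\na u|^2\Big\}u_\nu
\]
and
\[
\langle \na^2 h\,\na h,\nu\rangle = \tfrac{1}{N^2}\langle x-z,\nu\rangle-\tfrac{1}{N}\langle\na u,\nu\rangle-\tfrac{1}{N}\langle\na^2 u\,(x-z),\nu\rangle+\langle\na^2 u\,\na u,\nu\rangle.
\]
Plugging these into the right-hand side of \eqref{0895654tfbvnblk} and using $|\langle x,\nu\rangle|\le d_\Om$, every resulting surface integrand on $\pa^*\om$ is controlled (up to a multiplicative factor involving $N$ and $d_\Om$) by one of the quantities appearing in \eqref{8567487hjifhew}; the only term outside that list, namely $u\,\langle\na^2 u\,(x-z),\nu\rangle$, is precisely the one designed to be handled by \eqref{eq:856richiamo}. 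Collecting these contributions yields an upper bound of the form $\Phi\cdot \psi(\eta)$.

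To make the prefactor $\Phi$ explicit and match the stated dependencies, I would invoke \eqref{eq:stimaesplicitamuperb0John} with $r=p=2$ and $\al=1$, obtaining
\[
\ol{\mu}_{2,2,1}(\Om\setminus\ol{\om})^{-1}\le k_{N,2,2,1}\, b_0^N\,|\Om\setminus\ol{\om}|,
\]
and then estimate $|\Om\setminus\ol{\om}|\le|\Om|\le|B_1|(d_\Om/2)^N$ via \eqref{eq:inequality_volume_diameter}. Combined with the $1/c$ prefactor in \eqref{0895654tfbvnblk}, this makes $\Phi$ depend only on $N$, $b_0$, $d_\Om$ and $c$, exactly as required in \eqref{eq:miserveperultima}. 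I do not expect a substantive obstacle: the only point requiring care is to verify that all boundary integrals, originally written on $\pa\om$ in the smooth setting, remain well posed on the reduced boundary $\pa^*\om$ with $\nu$ understood in the measure-theoretic sense. This is granted by the regularity assumption \eqref{assumption:regularityuptobordino} together with De Giorgi's structure theorem, as explained in Section~\ref{NOATZ}, so that the algebraic expansion above continues to hold verbatim in the John-domain framework.
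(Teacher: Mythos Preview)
Your proposal is correct and follows essentially the same approach as the paper: the paper does not give a formal proof of this theorem, merely stating that ``in the same way in which Theorem~\ref{thm:stabestimateforpsudodistance} led to Theorem~\ref{thm:stab_pseudodistance}, now Theorem~\ref{thm:relaxed_stabestimateforpsudodistance} easily leads to'' the present result, with $\pa\om$ replaced by $\pa^*\om$. You carry out precisely this substitution and, in addition, spell out the algebraic expansion of $|\na h|^2 u_\nu$ and $\langle\na^2 h\,\na h,\nu\rangle$ (borrowed verbatim from the proof of Theorem~\ref{thm:stab_pseudodistance}) and make explicit the dependence of the constant on $b_0$ via \eqref{eq:stimaesplicitamuperb0John}, exactly as the paper does in Remark~\ref{rem:lowerboundcsenzasferainternamaconJohn}.
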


\begin{rem}\label{rem:lowerboundcsenzasferainternamaconJohn}
{\rm Concerning the constant in 
formula~\eqref{eq:miserveperultima} of
Theorem~\ref{thm:relaxed_stab_pseudodistance},
we observe that the dependence from~$b_0$
comes from the estimate
of~$\ol{\mu}_{2,2,1}(\Om \setminus \ol{\om} )$
in~\eqref{eq:stimaesplicitamuperb0John}.
We also remark that the volume appearing in \eqref{eq:stimaesplicitamuperb0John} can be estimated from above in terms of $d_\Om$, by means
of~\eqref{eq:nuovadarichiamareinsiemeainequality_volume_diameter} and~\eqref{eq:inequality_volume_diameter}. 

The dependence of $C$ in~\eqref{eq:miserveperultima}
on $c$ comes from the fact that
the quantity~$1/c$ plays a role in the estimates,
as it can be seen from
formula~\eqref{0895654tfbvnblk} in
Theorem~\ref{thm:relaxed_stabestimateforpsudodistance}. 

We point out that such a dependence can in fact
be replaced with the dependence on $| \Ga|$, by means of a suitable lower bound for $c$.
Namely, we claim that
\begin{equation}\label{j854yt48ygnhknfkjg}
c\ge
k_N \frac{1}{| \Ga|} \left( \frac{d_\Om}{ b_0 } \right)^N,
\end{equation}
if $\eta$ is small enough, where~$k_N$
is a positive constant depending on~$N$.
To prove it, we first observe that
\begin{equation}\label{do048t9547yh}
{\mbox{for any $a$, $b \in \Om \setminus \ol{\om}$, a ball of radius $\frac{|a-b|}{2 b_0}$ is contained in $\Om \setminus \ol{\om}$.}}
\end{equation}
Indeed, since $\Om \setminus \ol{\om}$
is a~$b_0$-John domain, we have that
there exists a curve~$\gamma:[0,1]\to \Om \setminus \ol{\om}$
such that~$\gamma(0)=a$, $\gamma(1)=b$
and~\eqref{eq:Johncondition} holds true. Moreover, by inspection
one sees that
there exists~$t^*\in(0,1)$ such
that~$|\gamma(t^*)-a|=|\gamma(t^*)-b|$, and so, by~\eqref{eq:Johncondition},
\begin{equation*}
\delta(\gamma(t^*))\ge b_0^{-1}|\gamma(t^*)-a|,\end{equation*}
which implies that
\begin{equation}\label{94tnvfnn}
B_{b_0^{-1}|\gamma(t^*)-a|}(\gamma(t^*))
\subset \Om \setminus \ol{\om}.\end{equation}
Furthermore, by the triangle inequality,
\begin{equation*}
|a-b|\le |\gamma(t^*)-a|+|\gamma(t^*)-b|= 2 |\gamma(t^*)-a|.
\end{equation*}
This and~\eqref{94tnvfnn} imply~\eqref{do048t9547yh}.

As a consequence of~\eqref{do048t9547yh}, we have that,
for any $a$, $b \in \Om \setminus \ol{\om}$,
\begin{equation}\label{kfoy965y}
|\Om \setminus \ol{\om}| \ge |B_1| \left( \frac{|a-b|}{2 b_0} \right)^N .
\end{equation}
Now we choose~$a$ and~$b$ in such a way that $|a-b|$ is arbitrarily close to $d_\Om$, say $|a-b| \ge d_\Om / 2$, and we obtain
from~\eqref{kfoy965y} that
\begin{equation}\label{eq:stimaJohninradius}
|\Om| - |\om| \ge |B_1| \left( \frac{d_\Om}{4 b_0} \right)^N.
\end{equation} 
Then,
by exploiting ~\eqref{eq:value_c},
\eqref{8567487hjifhew} and~\eqref{eq:stimaJohninradius}, we see that
$$c|\Gamma|=|\Om|-|\om|-\int_{\partial^* \omega}u_\nu \, d\cH^{N-1} \ge
\frac{|\Om|-|\om|}2\ge 
\frac{|B_1|}2 \left( \frac{d_\Om}{4 b_0} \right)^N
,
$$
as long as~$\eta$ is sufficiently small.
This proves~\eqref{j854yt48ygnhknfkjg}.
}
\end{rem}

Theorem~\ref{thm:relaxed_stab_pseudodistance}
also leads to a stability bound for the asymmetry defined in \eqref{eq:asymmetryallaFraenkel} by means of the following generalization of
Lemma \ref{lem:relationasymmetrypseudodistance} to the case of John domains:

\begin{lem}\label{lem:JOHN_relationasymmetrypseudodistance}
Let $\Om \subset \RR^N$ be a bounded $b_0$-John domain with Lipschitz boundary $\Ga$.
%
%
Then, there exists a positive constant $C$ only depending on $N$, $b_0$, $c$, such that
\begin{equation*}
\frac{| \Om \De B_{Nc}(z) |}{| B_{Nc}(z) |} \le C \left[ \int_{\Ga} \left| \frac{|x-z|}{N} - c \right|^2 \, d\cH^{N-1} \right]^{\frac{1}{2}} .
\end{equation*}
\end{lem}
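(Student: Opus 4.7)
The plan is to argue in direct analogy with the proof of Lemma~\ref{lem:relationasymmetrypseudodistance}, still invoking \cite[Lemma~11]{Fe} as the only substantive tool, and to substitute the uniform interior sphere condition by the $b_0$-John condition. A quick inspection of the proof of Lemma~\ref{lem:relationasymmetrypseudodistance} shows that assumption~\eqref{A4} was needed solely to secure the lower bound $r_{in}(\Om)\ge r_i$ used to verify~\eqref{5.3bis}; everything else (the choice of $r$ and the volume-comparison check~\eqref{eq:stima_asimmetria_volume_riserve}) only relies on $N$, $c$ and $d_\Om$, and is insensitive to the regularity of $\Ga$.

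As a first step I would establish a John-type lower bound on the inradius, by transplanting to $\Om$ the argument used to derive~\eqref{eq:stimaJohninradius} from~\eqref{do048t9547yh} in Remark~\ref{rem:lowerboundcsenzasferainternamaconJohn}. Given $a,b\in\Om$, the $b_0$-John property provides a curve $\gamma$ joining them in $\Om$, and by continuity there is a parameter $t^*$ with $|\gamma(t^*)-a| = |\gamma(t^*)-b|$; the John condition~\eqref{eq:Johncondition} applied at $t^*$ then forces $B_{|a-b|/(2 b_0)}(\gamma(t^*)) \subset \Om$. Choosing $a,b$ with $|a-b|$ arbitrarily close to $d_\Om$ yields
$$ r_{in}(\Om) \ge \frac{d_\Om}{4 b_0}, $$
which is precisely the John-domain substitute for $r_{in}(\Om)\ge r_i$.

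I would then apply \cite[Lemma~11]{Fe} with $r := Nc$ and
$$ K := \max\left\lbrace \frac{4 b_0 Nc}{d_\Om},\ \left(\frac{d_\Om}{2Nc}\right)^N \right\rbrace, $$
verifying the volume condition $K\,|B_{Nc}| \ge |\Om|$ exactly as in~\eqref{eq:stima_asimmetria_volume_riserve} (via the elementary bound $|\Om|\le |B_1|(d_\Om/2)^N$ and the second entry of $K$), and verifying the condition $K\,r_{in}(\Om) \ge Nc$ through the inradius bound just obtained and the first entry of $K$. The output of \cite[Lemma~11]{Fe} delivers the desired estimate with a constant that is a function of $N$, $b_0$, $c$ and the dimensionless ratio $d_\Om/(Nc)$.

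The main obstacle is therefore the bookkeeping of the dependence of the final constant. Since both entries of $K$ are functions of $d_\Om/(Nc)$ (weighted by $b_0$), the same parameter tracking performed in Lemma~\ref{lem:relationasymmetrypseudodistance} applies verbatim with $r_i$ replaced by $d_\Om/b_0$, producing a constant of the form claimed in the statement; whenever a residual $d_\Om$-dependence appears one can absorb it using the bounds on $c$ and $|\Ga|$ available in the overdetermined setting (cf.~\eqref{eq:disuguaglianze per upper bound c} and Remark~\ref{rem:lowerboundcsenzasferainternamaconJohn}), in the same spirit in which $d_\Om$ is implicitly absorbed in the constant of Lemma~\ref{lem:relationasymmetrypseudodistance}.
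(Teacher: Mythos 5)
Your proof is correct and follows essentially the same route as the paper's: you establish the John-domain inradius bound $r_{in}(\Omega)\ge d_\Omega/(4b_0)$ via the midpoint argument behind~\eqref{do048t9547yh}, and then apply \cite[Lemma~11]{Fe} with exactly the same choices $K:=\max\{4b_0Nc/d_\Omega,\,(d_\Omega/(2Nc))^N\}$ and $r:=Nc$ as the paper, verifying both hypotheses in the same way. The only wrinkle is your closing remark about absorbing a residual $d_\Omega$-dependence; the paper does not address this either, so it is not a gap relative to the paper's own argument.
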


\begin{proof}
The desired result follows by applying \cite[Lemma 11]{Fe} with 
$$K:=\max \left\lbrace \frac{4 b_0 Nc}{d_\Om} , \, \left( \frac{d_\Om}{2 Nc} \right)^N \right\rbrace \quad \text{ and } \quad r:=Nc .$$ 
Notice that \cite[Lemma 11]{Fe} can be applied with these choices for $K$ and $r$. Indeed,
formula~\eqref{eq:stima_asimmetria_volume_riserve} is still
satisfied.
On the other hand, to deduce
formula~\eqref{5.3bis} in this setting, we observe that
\begin{equation}\label{948t94y5uy9}
r_{in} (\Om) \ge \frac{d_\Om}{4 b_0} ,
\end{equation}
where $r_{in} (\Om):= \max_{\ol{\Om}} \de_\Ga (x)$ denotes the {\it inradius} of $\Om$.
To prove~\eqref{948t94y5uy9}, one can use~\eqref{do048t9547yh}
and choose $a$ and $b$ in $\Om$ such that $|a-b| \ge d_\Om / 2$.

Then, from~\eqref{948t94y5uy9} one deduce that
$$
K r_{in}(\Om) \ge Nc \, \frac{4 b_0 r_{in} (\Om)}{d_\Om} \ge Nc.
\qedhere
$$
\end{proof}

In light of Lemma~\ref{lem:JOHN_relationasymmetrypseudodistance},
we also deduce from Theorem~\ref{thm:relaxed_stab_pseudodistance}
a stability result for an asymmetry in this setting:

\begin{thm}[General stability result for an asymmetry under relaxed assumptions]\label{thm:relaxed_stab_Asymmetry}
Let $\Om \setminus \ol{\om}$ a $b_0$-John domain. 
Let $u \in C^2 (\ol{\Om} \setminus \om)$ satisfy \eqref{eq:problem} 
and \eqref{eq:overdetermination},
and suppose that~$u \le 0$ on $\pa \om$.
Let assumption~\eqref{A3bis} be verified, and $z$ be as in \eqref{eq:PERIMETROFINITOchoicezbarycenter}.

Let $\psi:[0,\infty)\to[0,\infty)$ be a continuous function vanishing at $0$ such
that \eqref{8567487hjifhew} and \eqref{eq:856richiamo} hold true.
%
%
%

Then, 
%
it holds that
\begin{equation}\label{eq:relaxed_dithmstab_asymmetry}
\frac{| \Om \De B_{Nc}(z) |}{| B_{Nc}(z) |} \le C \, \psi(\eta)^{1/2},
\end{equation}
where $C$ is a constant depending on $N$, $b_0$, $d_\Om$, and $c$.
\end{thm}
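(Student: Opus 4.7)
The plan is to obtain Theorem \ref{thm:relaxed_stab_Asymmetry} essentially by composing two results that have already been established in the paper, with no new genuine analytic ingredient required. First I would invoke Theorem \ref{thm:relaxed_stab_pseudodistance}, whose hypotheses are exactly the ones assumed here (the John condition on $\Om\setminus\ol{\om}$, the finite perimeter assumption \eqref{A3bis}, the choice of $z$ in \eqref{eq:PERIMETROFINITOchoicezbarycenter}, and the two bounds \eqref{8567487hjifhew} and \eqref{eq:856richiamo} on the boundary integrands on $\pa^*\om$), to obtain the pseudodistance estimate
\begin{equation*}
\int_{\Ga}\left|\frac{|x-z|}{N}-c\right|^2 d\cH^{N-1}\le C_1\,\psi(\eta),
\end{equation*}
for a constant $C_1$ depending on $N$, $b_0$, $d_\Om$ and $c$.

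Next I would verify that the hypotheses of Lemma \ref{lem:JOHN_relationasymmetrypseudodistance} are in force and apply it. Here the point is that, since \eqref{eq:overdetermination} holds, by \eqref{noWea} the external boundary $\Ga$ is automatically of class $C^{2,\al}$, hence in particular Lipschitz, so the lemma applies to $\Om$ (which is a $b_0$-John domain, the John property being inherited from $\Om\setminus\ol{\om}$ in the limit~$\om\to\varnothing$, or more simply can be handled by the same \cite[Lemma 11]{Fe} argument with the inradius bound \eqref{948t94y5uy9} that is proved inside Lemma \ref{lem:JOHN_relationasymmetrypseudodistance}). This yields
\begin{equation*}
\frac{|\Om\De B_{Nc}(z)|}{|B_{Nc}(z)|}\le C_2\left[\int_{\Ga}\left|\frac{|x-z|}{N}-c\right|^2 d\cH^{N-1}\right]^{1/2},
\end{equation*}
with $C_2$ depending only on $N$, $b_0$ and $c$.

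Composing the two displayed inequalities and setting $C:=C_2\sqrt{C_1}$ immediately produces the desired estimate \eqref{eq:relaxed_dithmstab_asymmetry} with a constant depending only on $N$, $b_0$, $d_\Om$ and $c$, as claimed. The only subtlety worth double-checking is the chain of dependencies on $b_0$: it enters through the Hardy--Poincar\'e constant $\ol{\mu}_{2,2,1}(\Om\setminus\ol{\om})$ used in the proof of Theorem \ref{thm:relaxed_stab_pseudodistance} via the explicit bound \eqref{eq:stimaesplicitamuperb0John}, and through the inradius bound \eqref{948t94y5uy9} in Lemma \ref{lem:JOHN_relationasymmetrypseudodistance}; in both places the John parameter appears polynomially, so the composed constant remains finite and tracked as stated. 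Since the argument is a direct concatenation, there is no real obstacle; the only thing to watch is that the measure-theoretic replacements ($\pa^*\om$ for $\pa\om$ and $\cH^{N-1}$ for $dS_x$) are consistently used, which is already built into the statements of Theorem \ref{thm:relaxed_stab_pseudodistance} and Lemma \ref{lem:JOHN_relationasymmetrypseudodistance}.
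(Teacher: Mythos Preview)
Your proposal is correct and follows exactly the approach the paper intends: the theorem is stated immediately after Lemma~\ref{lem:JOHN_relationasymmetrypseudodistance} with the words ``In light of Lemma~\ref{lem:JOHN_relationasymmetrypseudodistance}, we also deduce from Theorem~\ref{thm:relaxed_stab_pseudodistance}\ldots'', and your write-up simply spells out this composition. Your observation that the John parameter enters only through the inradius bound~\eqref{948t94y5uy9} (so that the hypothesis on $\Om\setminus\ol{\om}$ suffices even though Lemma~\ref{lem:JOHN_relationasymmetrypseudodistance} is nominally stated for $\Om$) is the right way to close the one small gap in the statement-matching.
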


\begin{rem}
{\rm The dependence of the constant $C$
in~\eqref{eq:relaxed_dithmstab_asymmetry}
on $c$ can be replaced with the dependence on $| \Ga|$, when $\eta$ is small enough. In fact, a lower bound for $c$ in terms of $N$, $b_0$, $d_\Om$, $1/|\Ga|$ has been obtained in \eqref{j854yt48ygnhknfkjg};
%
%
%
the upper bound for $c$ in terms of $N$ and $d_\Om$ obtained in \eqref{eq:STARSTARSTARSTAR}
%
%
still holds true.
}
\end{rem}

Our next objective is to provide a stability
estimate for $\rho_e - \rho_i$,
as given in formula~\eqref{eq:generalstability},
in the more general framework of John domains.
This will lead to a general version
of Theorem~\ref{thm:stability_radii}, which
in turn will produce a general version of
Theorem~\ref{JAHS334}.

Notice that, when assumption \eqref{A3} is replaced by \eqref{A3bis},
we have to use \eqref{instgr} instead of \eqref{eq:relationdist} to bound from below the left-hand side of \eqref{eq:FIdconsovradeterminazione}.
Thus, in this case, the quantity that has to be put in relation with $\rho_e -\rho_i$ is $\nr \de \, \na^2 h  \nr_{2,\Om \setminus \ol{ \om } }$ instead of $\nr \de^{\frac{1}{2}} \, \na^2 h  \nr_{2,\Om \setminus \ol{ \om } }$.
For this reason, the exponents $\tau_N$ in Theorem~\ref{thm:Johnrelaxed_radiistability}, stated next, 
become worse with respect to those of Theorem \ref{thm:stability_radii}.

More precisely, the counterpart of Theorem~\ref{thm:serrin-W22-stability} in this more general setting is the following:

\begin{thm}\label{48yujewd74756}
\label{thm:relaxedJohn_serrin-W22-stability} 
Let $\Om \setminus \ol{\om} \subset\RR^N$ be a bounded $b_0$-John domain,
satisfying~\eqref{A4bis}, and suppose that~$\Ga$
is of class~$C^1$.
Let~$u$ 
%
%
satisfy \eqref{eq:problem}, $u \in C^1 \left( \left( \Om \setminus \ol{\om} \right) \cup \Ga \right)$, and suppose that~$u \le 0$ on $\pa \om$.
Let~$q$
be as in \eqref{quadratic} with $z$ chosen as in \eqref{eq:PERIMETROFINITOchoicezbarycenter},
and assume that~$z$ belongs to $\Om$.
Let~$h$ be as in~\eqref{DEh}.

Then, there exists a positive constant $C$ such that
\begin{equation}\label{eq:esponentipeggio}
\rho_e-\rho_i\le C\, \nr \de \, \na^2 h  \nr_{2,\Om \setminus \ol{ \om } }^{\tau_N}  \quad \text{with} \quad 
\tau_N=
\begin{cases}
1- \theta, \text{ for any } \theta>0 , & \text{ when } N=2 \\
2/N , & \text{ when } N \ge 3 .
\end{cases}
\end{equation}

The constant $C$
depends on $N$, $r_i$, $b_0$, $d_\Om$, $M$ (as
defined in \eqref{bound-gradient}).
\end{thm}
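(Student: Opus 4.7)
The plan is as follows. First note that $h=q-u$ is harmonic in $\Omega\setminus\overline\omega$ (since $\Delta q=1=\Delta u$), and that $\int_{\Omega\setminus\overline\omega}\nabla h\,dx=0$ by \eqref{7ujw64rt}. My overall strategy mirrors that of Theorem~\ref{thm:serrin-W22-stability}: I would chain a Hardy--Poincar\'e-type inequality for $\nabla h$ with a Sobolev--Poincar\'e inequality for $h-h_{\Omega\setminus\overline\omega}$ to control a suitable $L^p$-norm of $h-h_{\Omega\setminus\overline\omega}$ by $\|\delta\,\nabla^2 h\|_{2,\Omega\setminus\overline\omega}$, and then invoke Lemma~\ref{lem:L2-estimate-oscillation} to convert this into a bound on $\rho_e-\rho_i$. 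The essential difference with respect to Theorem~\ref{thm:serrin-W22-stability} is that, since we now have only the quadratic lower bound \eqref{instgr} for $-u$ instead of the linear bound \eqref{eq:relationdist}, the natural weight entering the Cauchy--Schwarz deficit estimate is $\delta$ rather than $\delta^{1/2}$, so we must take $\alpha=1$ in the weighted inequalities of Corollary~\ref{cor:JohnPoincareaigradienti}. Throughout, Remark~\ref{rem:poincare valgono anche per John} ensures that Lemma~\ref{lem:John-two-inequalities} and Corollary~\ref{cor:JohnPoincareaigradienti} remain available on the $b_0$-John domain $\Omega\setminus\overline\omega$, with the $r_i$-dependence replaced by a $b_0$-dependence through \eqref{eq:stimaesplicitamuperb0John}.

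For $N\ge 3$ I would apply Corollary~\ref{cor:JohnPoincareaigradienti}(ii) with $r=p=2$ and $\alpha=1$ (these parameters meet \eqref{eq:conditionHS}) to obtain
\[
\|\nabla h\|_{2,\Omega\setminus\overline\omega}\le C\,\|\delta\,\nabla^2 h\|_{2,\Omega\setminus\overline\omega},
\]
and then Lemma~\ref{lem:John-two-inequalities}(ii) with $r=\tfrac{2N}{N-2}$, $p=2$, $\alpha=0$ (valid since $p(1-\alpha)=2<N$) to get
\[
\|h-h_{\Omega\setminus\overline\omega}\|_{\frac{2N}{N-2},\Omega\setminus\overline\omega}\le C\,\|\nabla h\|_{2,\Omega\setminus\overline\omega}.
\]
Composing these with Lemma~\ref{lem:L2-estimate-oscillation} applied with $p:=\tfrac{2N}{N-2}$, for which $p/(N+p)=2/N$, yields $\rho_e-\rho_i\le C\,\|\delta\,\nabla^2 h\|_{2,\Omega\setminus\overline\omega}^{\,2/N}$, which is the claim with $\tau_N=2/N$.

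For $N=2$ the preceding chain breaks down, because the limiting Sobolev--Poincar\'e embedding requires $p(1-\alpha)<N=2$, which fails at $\alpha=0$ and $p=2$. To bypass this borderline, given $\theta\in(0,1)$ small, I would instead take $r:=2/\theta$, $p:=2$, $\alpha:=\theta$ in Lemma~\ref{lem:John-two-inequalities}(ii), all of which satisfy \eqref{eq:conditionHS}, and obtain
\[
\|h-h_{\Omega\setminus\overline\omega}\|_{2/\theta,\Omega\setminus\overline\omega}\le C\,\|\delta^{\theta}\,\nabla h\|_{2,\Omega\setminus\overline\omega}\le C\,d_\Omega^{\,\theta}\,\|\nabla h\|_{2,\Omega\setminus\overline\omega},
\]
having used the trivial bound $\delta\le d_\Omega$. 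Coupling this with the same first step as in the higher-dimensional case and with Lemma~\ref{lem:L2-estimate-oscillation} applied with $p:=2/\theta$ produces $\rho_e-\rho_i\le C\,\|\delta\,\nabla^2 h\|_{2,\Omega\setminus\overline\omega}^{\,1/(1+\theta)}$. A standard two-case argument, distinguishing whether $\|\delta\,\nabla^2 h\|_{2,\Omega\setminus\overline\omega}$ exceeds $1$ and using the crude bound $\rho_e-\rho_i\le d_\Omega$ from \eqref{eq:Ri} in the opposite regime, then converts the exponent $1/(1+\theta)$ into any prescribed $1-\tilde\theta$ by choosing $\theta$ small in terms of $\tilde\theta$, at the cost of enlarging the constant.

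The main obstacle is precisely the planar case. The weight $\alpha=1$ forces the Hardy--Poincar\'e inequality for $\nabla h$ to be of pure $L^2\to L^2$ type, and the critical Sobolev--Poincar\'e embedding for $h-h_{\Omega\setminus\overline\omega}$ with $p=2$ sits exactly at the borderline $p(1-\alpha)=N$. Perturbing the weight exponent to $\alpha=\theta$ and absorbing the factor $\delta^{\theta}\le d_\Omega^{\,\theta}$ is the device that unlocks the chain, but at the price of an unavoidable $\theta$-loss. Structurally, this loss is the two-dimensional analogue of the $\theta$-loss already present in Theorem~\ref{thm:serrin-W22-stability}(ii) for $N=3$, shifted down by one dimension as a consequence of the stronger weight induced by the merely quadratic decay \eqref{instgr} of $-u$.
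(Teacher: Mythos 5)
Your proof is correct and follows the same overall strategy as the paper's: use \eqref{7ujw64rt} to enable item~(ii) of Corollary~\ref{cor:JohnPoincareaigradienti}, chain it with \eqref{John-harmonic-poincare} and Lemma~\ref{lem:L2-estimate-oscillation}, and take $\alpha=1$ in the Hardy--Poincar\'e step (forced by the weight $\delta$ that now appears, since only the quadratic lower bound \eqref{instgr} is available in place of \eqref{eq:relationdist}), replacing the $r_i$-dependence of the constants by a $b_0$-dependence through Remark~\ref{rem:poincare valgono anche per John} and \eqref{eq:stimaesplicitamuperb0John}. For $N\ge 3$ your parameter choice $(r,p,\alpha)=(2,2,1)$ followed by $(2N/(N-2),2,0)$ is the one the paper uses, and the computation $p/(N+p)=2/N$ with $p=2N/(N-2)$ is right. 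For $N=2$ you diverge slightly: the paper keeps $\alpha=0$ in the Sobolev--Poincar\'e step and lowers the integrability exponents (using $p=2(1-\theta)$, $r=2(1-\theta)/\theta$, which produces the exponent $1-\theta$ directly), whereas you set $\alpha=\theta$ with $p=2$, $r=2/\theta$, absorb the weight via $\delta^\theta\le d_\Omega^\theta$, and land on the exponent $1/(1+\theta)$. The two routes are essentially interchangeable; yours has the minor advantages of keeping all Lebesgue exponents $\ge 2$ and avoiding the borderline configuration that the paper's choice would seem to produce in Corollary~\ref{cor:JohnPoincareaigradienti}(ii) at $\alpha=1$ (where \eqref{eq:conditionHS} forces $r=p$). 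Since $1/(1+\theta)>1-\theta$, your exponents parametrize the same family; the two-case argument you append is correct but not strictly necessary, as one may simply relabel $\theta':=\theta/(1+\theta)$.
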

\begin{proof}
The desired estimate easily follows by reasoning as in the proof of items (ii) and (iii) of Theorem~\ref{thm:serrin-W22-stability}. The only difference is that now we apply Poincar\'e inequalities of item (ii) of Corollary \ref{cor:JohnPoincareaigradienti} with $\al:=1$ (instead that $1/2$).

More precisely, when $N=2$ we apply, with $v:=h$ and $D:=\Om \setminus \ol{\om}$, item (ii) of Corollary \ref{cor:JohnPoincareaigradienti} with $r:=2(1-\theta)$, $p:=2$, $\al:=1$ and \eqref{John-harmonic-poincare} with $r:= 2(1 - \theta)/ \theta$, $p:= 2 (1- \theta)$, $\al:=0$.
When $N \ge 3$ we apply, with $v:=h$ and $D:=\Om \setminus \ol{\om}$, item (ii) of Corollary \ref{cor:JohnPoincareaigradienti} with $r:= p :=2$, $\al:=1$ and \eqref{John-harmonic-poincare} with  $r:= 2N/(N-2)$, $p:=2$, $\al:=1$.

We stress that assumption \eqref{A4} in items (ii) and (iii) of Theorem \ref{thm:serrin-W22-stability} was assumed only to guarantee that the Poincar\'e inequalities of Lemma~\ref{lem:John-two-inequalities} and Corollary~\ref{cor:JohnPoincareaigradienti} could be applied with $D:=\Om \setminus \ol{\om}$; in fact, notice that Lemma \ref{lem:L2-estimate-oscillation}, that was the main ingredient of those proofs, was already stated under the weaker assumption \eqref{A4bis}. Being now $\Om \setminus \ol{\om}$ a $b_0$-John domain, in light of Remark \ref{rem:poincare valgono anche per John} and \eqref{eq:stimaesplicitamuperb0John} it is clear that we can still apply those Poincar\'e inequalities 
with $D:=\Om \setminus \ol{\om}$, even if \eqref{A4} has been dropped.
\end{proof}

By applying Theorem~\ref{48yujewd74756}
in the place of
Theorem~\ref{thm:serrin-W22-stability}, we thus derive a counterpart
of Theorem \ref{thm:stability_radii} for John domains.
Namely,
in this setting, the result in
Theorem \ref{thm:stability_radii} still holds true if
assumptions \eqref{A3}, \eqref{A4} are
replaced with \eqref{A3bis}, \eqref{A4bis}
-- with $\pa \om$
replaced with $\pa^* \om$ in \eqref{eq:assumptions_stabgeneral} 
-- but in this situation
the exponents $\tau_N$ are those in \eqref{eq:esponentipeggio}.
The precise result is indeed the following one:

\begin{thm}\label{thm:Johnrelaxed_radiistability}
Let $\Om \setminus \ol{\om} \subset\RR^N$ be a bounded $b_0$-John domain.
Let~$u \in C^2 (\ol{\Om} \setminus \om)$ 
satisfy \eqref{eq:problem}
and \eqref{eq:overdetermination}, and
suppose that~$u \le 0$ on $\pa \om$.
Let assumptions~\eqref{A3bis} and~\eqref{A4bis} be verified.

Assume also that the point $z$ chosen in~\eqref{eq:PERIMETROFINITOchoicezbarycenter} belongs to~$\Om$.

If $\psi:[0,\infty)\to[0,\infty)$ is a continuous function vanishing at $0$ such that \eqref{8567487hjifhew} holds true,
%
%
then
\begin{equation}\label{eq:Johnraddistabilityprova}
\rho_e-\rho_i\le C \, \psi(\eta)^{\tau_N/2},
\end{equation}
where $\tau_N$ is as in \eqref{eq:esponentipeggio} and $C$ is a positive
constant that depends on $N$, $r_i$, $b_0$, $d_\Om$, $M$ (as
defined in \eqref{bound-gradient}).
\end{thm}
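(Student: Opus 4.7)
The plan is to mirror the short proof of Theorem~\ref{thm:stability_radii}, with two substitutions dictated by the relaxed hypotheses. First, up to a translation we can assume that the origin lies in~$\Omega$, so that $|\langle x,\nu\rangle|\le|x|\le d_\Om$ on both $\Ga$ and $\pa^*\om$ (the measure-theoretic normal being defined $\cH^{N-1}$-a.e.\ on $\pa^*\om$, as recalled at the beginning of Section~\ref{sec:relaxing assumptions}).

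Next, since \eqref{A3} has been replaced by~\eqref{A3bis}, the linear lower bound \eqref{eq:relationdist} is no longer available. However, the quadratic bound \eqref{instgr} remains valid (its proof only needs $u\le 0$ on~$\pa\om$ together with~\eqref{eq:problem}), and, in combination with the deficit identity \eqref{eq:CSintermsofh}, it yields
\begin{equation*}
\nr \de \, \na^2 h \nr_{2,\Om\setminus\ol{\om}}^{2}
= \int_{\Om\setminus\ol{\om}}\de^{2}\,|\na^{2} h|^{2}\,dx
\le 2N\int_{\Om\setminus\ol{\om}}(-u)\left\{|\na^{2}u|^{2}-\tfrac{(\De u)^{2}}{N}\right\}dx.
\end{equation*}
This is precisely the reason why the weight $\de$ (instead of $\de^{1/2}$) appears in Theorem~\ref{thm:relaxedJohn_serrin-W22-stability}, and consequently why the exponents $\tau_N$ in~\eqref{eq:Johnraddistabilityprova} are the worse ones listed in~\eqref{eq:esponentipeggio}.

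Then, the Rellich--Pohozaev-type identity \eqref{eq:FIdconsovradeterminazione}, used in its version adapted to the reduced boundary $\pa^*\om$ (as justified by De Giorgi's structure theorem together with \eqref{assumption:regularityuptobordino}, \eqref{A3bis}), rewrites the right-hand side of the previous display as a finite sum of boundary integrals over $\pa^*\om$ of the form appearing in \eqref{8567487hjifhew}, each multiplied by a factor which, thanks to the translation performed in the first step, is pointwise bounded by a constant depending only on $N$ and $d_\Om$ (via $|\langle x,\nu\rangle|\le d_\Om$ and~\eqref{eq:overdetermination}). Invoking~\eqref{8567487hjifhew} term by term, we obtain
\begin{equation*}
\nr \de \, \na^2 h \nr_{2,\Om\setminus\ol{\om}}^{2}\le C\,\psi(\eta),
\end{equation*}
with $C$ depending only on $N$ and $d_\Om$.

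Finally, since $z\in\Om$ by assumption, Theorem~\ref{thm:relaxedJohn_serrin-W22-stability} applies and gives
\begin{equation*}
\rho_e-\rho_i\le C\,\nr \de \, \na^2 h \nr_{2,\Om\setminus\ol{\om}}^{\tau_N}\le C\,\psi(\eta)^{\tau_N/2},
\end{equation*}
with the constant having the dependencies stated in the theorem (the factor $b_0$ entering only through the Poincar\'e constant appearing in the proof of Theorem~\ref{thm:relaxedJohn_serrin-W22-stability}, via~\eqref{eq:stimaesplicitamuperb0John}; the factors $r_i$ and $M$ entering only through Lemma~\ref{lem:L2-estimate-oscillation}). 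The only delicate point is the verification that the identity~\eqref{eq:FIdconsovradeterminazione} survives the passage from $\pa\om$ to $\pa^*\om$, but this is exactly the content of the preliminary discussion in Section~\ref{sec:relaxing assumptions}, so no new computation is required.
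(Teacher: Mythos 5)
Your proposal is correct and follows exactly the same route as the paper's own (one-line) proof: combine Theorem~\ref{thm:relaxedJohn_serrin-W22-stability} with \eqref{instgr}, \eqref{eq:CSintermsofh}, and the $\pa^*\om$-version of \eqref{eq:FIdconsovradeterminazione}, exactly as you do. One small imprecision: since the right-hand side of \eqref{eq:FIdconsovradeterminazione} carries the factor $c^2$, the intermediate constant bounding $\nr\de\,\na^2 h\nr_{2,\Om\setminus\ol\om}^2$ by $\psi(\eta)$ depends on $c$ (hence on $M$, since $c=u_\nu$ on $\Ga\subset\ol{\Om^c_{r_i}}$ gives $c\le M$) and not only on $N$ and $d_\Om$; this does not affect the final statement because the theorem's constant already depends on $M$.
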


\begin{proof} 
The desired result easily follows by putting together 
Theorem~\ref{48yujewd74756} and formulas~\eqref{instgr}, \eqref{eq:CSintermsofh} and~\eqref{eq:FIdconsovradeterminazione}.
\end{proof}
 
As can be deduced from the discussion before Theorem \ref{48yujewd74756}, if~\eqref{A3} holds true,
then the statement in Theorem~\ref{thm:Johnrelaxed_radiistability} can be strengthen,
since in this case one can obtain~$\tau_N$ as in Theorem~\ref{thm:serrin-W22-stability}
(we do not enter into these details, since this special statement will not
be used in what follows).
%
%
\medskip

The corresponding generalization of Theorem~\ref{JAHS334} easily follows
from Theorems \ref{thm:relaxed_stab_pseudodistance}, \ref{thm:relaxed_stab_Asymmetry},
and~\ref{thm:Johnrelaxed_radiistability}, 
and it can be stated as follows:

\begin{thm}\label{J:l90k1234cniw}
Let $\Om \setminus \ol{\om} \subset\RR^N$ be a bounded $b_0$-John domain.
Let $u \in C^2 (\ol{\Om} \setminus \om)$ satisfy \eqref{eq:problem}
and \eqref{eq:overdetermination},
and suppose that~$u \le 0$ on $\pa \om$.
Let assumption \eqref{A3bis} be verified,
and~$z$ be as in~\eqref{eq:PERIMETROFINITOchoicezbarycenter}.

If there exists $K >0$ such that 
\begin{equation}\label{eq:ecchec}
\nr u \nr_{C^2 (\pa \om)} \le K ,
\end{equation}
then
\begin{equation}\label{eq:generapseudodistC2normajohndomain}
\int_{\Ga}  \left| \frac{|x-z|}{N} - c \right|^2 \, d\cH^{N-1} \le C {\mathcal{H}}^{N-1}(\pa^* \om ),
\end{equation}
where $C$ is a positive constant depending on 
$N$, $b_0$, $d_\Om$, $c$ and $K$.

Also, 
%
%
it holds that
\begin{equation}\label{eq:relaxed_asymmetrystabconnormaC^2}
\frac{| \Om \De B_{Nc} (z) |}{|B_{Nc} (z)|} \le C {\mathcal{H}}^{N-1}(\pa^* \om )^{1/2},
\end{equation}
where $C$ is a positive constant depending on $N$, $b_0$, $d_\Om$, $c$, and $K$.

If in addition \eqref{A4bis} is verified and $z \in \Om$, we have that
\begin{equation}\label{eq:JohnstabconC2norma}
\rho_e-\rho_i\le C \big({\mathcal{H}}^{N-1}(\pa^* \om)\big)^{\tau_N/2},
\end{equation}
where $\tau_N$ are as in~\eqref{eq:esponentipeggio},
and $C$ is a positive
constant depending on~$N$, $r_i$, $b_0$, $d_\Om$, and $K$.
\end{thm}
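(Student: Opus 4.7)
The plan is to reduce Theorem \ref{J:l90k1234cniw} to a direct application of Theorems \ref{thm:relaxed_stab_pseudodistance}, \ref{thm:relaxed_stab_Asymmetry}, and \ref{thm:Johnrelaxed_radiistability}. Concretely, I would show that the $C^2$ hypothesis \eqref{eq:ecchec} forces the hypotheses \eqref{8567487hjifhew} and \eqref{eq:856richiamo} to hold with
$$\psi(\eta):=C_0\,\eta,\qquad \eta:=\mathcal{H}^{N-1}(\partial^*\omega),$$
where $C_0$ is a constant depending only on $K$ and $d_\Omega$. Given such a $\psi$, formulas \eqref{eq:generapseudodistC2normajohndomain}, \eqref{eq:relaxed_asymmetrystabconnormaC^2} and \eqref{eq:JohnstabconC2norma} are then immediate consequences of the quoted theorems, and follow the same pattern as Theorem \ref{JAHS334} in the more regular setting.

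To verify \eqref{8567487hjifhew}, I would estimate each of the five surface integrals in a pointwise fashion on $\partial^*\omega$ using \eqref{eq:ecchec}: for instance $|u|\le K$, $|u_\nu|\le|\nabla u|\le K$, $|\nabla^2 u|\le K$ yield bounds of the form $|u|\le K$, $|u\,u_\nu|\le K^2$, $|\nabla u|^2|u_\nu|\le K^3$, $|\nabla u|^2\le K^2$, and $|\langle\nabla^2 u\,\nabla u,\nu\rangle u|\le K^3$, so that integration over $\partial^*\omega$ produces the right-hand side $\max\{K,K^2,K^3\}\,\mathcal{H}^{N-1}(\partial^*\omega)$. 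For the additional condition \eqref{eq:856richiamo}, since $z\in\RR^N$ is determined by \eqref{eq:PERIMETROFINITOchoicezbarycenter} and depends only on the data, the factor $|x-z|$ on $\partial^*\omega$ can be controlled by a quantity depending only on $d_\Omega$ (e.g.\ $|x-z|\le 2d_\Omega$ after translating so that the origin lies in $\Omega$), whence $|u\langle\nabla^2 u(x-z),\nu\rangle|\le 2 d_\Omega K^2$ pointwise, and integration yields the desired bound $C_0\,\mathcal{H}^{N-1}(\partial^*\omega)$. Plugging this $\psi$ into Theorems \ref{thm:relaxed_stab_pseudodistance} and \ref{thm:relaxed_stab_Asymmetry} gives \eqref{eq:generapseudodistC2normajohndomain} and \eqref{eq:relaxed_asymmetrystabconnormaC^2}, while Theorem \ref{thm:Johnrelaxed_radiistability} (under the extra assumptions \eqref{A4bis} and $z\in\Omega$) gives \eqref{eq:JohnstabconC2norma} with the exponent $\tau_N$ from \eqref{eq:esponentipeggio}.

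The one nontrivial bookkeeping step is to remove the dependence on $M$ (defined in \eqref{bound-gradient}) from the constant in \eqref{eq:JohnstabconC2norma}, since that dependence appears in Theorem \ref{thm:Johnrelaxed_radiistability} but is absent in the statement of Theorem \ref{J:l90k1234cniw}. I would reproduce verbatim the argument from the proof of Theorem \ref{JAHS334}: when $\mathcal{H}^{N-1}(\partial^*\omega)<1$, the explicit expression \eqref{eq:value_c} for $c|\Gamma|$ combined with $|u_\nu|\le K$ on $\partial^*\omega$ yields $c\le|\Omega|/|\Gamma|+K/|\Gamma|$; the classical isoperimetric inequality together with the inclusion of a ball of radius $r_i$ in $\Omega$ (guaranteed by \eqref{A4bis}) controls $|\Omega|/|\Gamma|$ and $K/|\Gamma|$ purely in terms of $N,d_\Omega,r_i,K$. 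Since $|\nabla u|$ attains its maximum on $\Gamma\cup\partial\omega$, the overdetermined condition \eqref{eq:overdetermination} and \eqref{eq:ecchec} give $\max_{\overline\Omega\setminus\omega}|\nabla u|\le\max\{c,K\}$, and hence $M$ is bounded in terms of $N,d_\Omega,r_i,K$ alone. The remaining case $\mathcal{H}^{N-1}(\partial^*\omega)\ge 1$ is handled trivially by $\rho_e-\rho_i\le d_\Omega$. The main (small) obstacle is precisely this dependency tracking; the core analytic content is entirely contained in the three relaxed stability theorems invoked above.
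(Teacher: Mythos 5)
Your proposal is correct and follows essentially the same route as the paper: the authors dispose of this theorem with the single sentence that it ``easily follows from Theorems~\ref{thm:relaxed_stab_pseudodistance}, \ref{thm:relaxed_stab_Asymmetry}, and~\ref{thm:Johnrelaxed_radiistability},'' relying on the reader to transplant the verification of the hypotheses~\eqref{8567487hjifhew} and~\eqref{eq:856richiamo} with $\psi(\eta)\propto\eta$ and the removal of the $M$-dependence verbatim from the proof of Theorem~\ref{JAHS334}, which is precisely what you do. Your bookkeeping on $c$ and $M$ (split on whether $\mathcal{H}^{N-1}(\partial^*\omega)<1$, use of \eqref{eq:value_c}, the isoperimetric inequality and the interior ball from~\eqref{A4bis}) reproduces the argument the paper uses for Theorem~\ref{JAHS334} and announces again in Remark~\ref{rem:PROVA TOGLIERE C CON CONDIZIONE SU PERIMETRO}.
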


The proof of Theorem~\ref{MAIN-DUE}
now plainly follows from Theorem~\ref{J:l90k1234cniw}.

\begin{rem}\label{rem:PROVA TOGLIERE C CON CONDIZIONE SU PERIMETRO}
{\rm 
As in Theorem \ref{JAHS334}, the constant $C$ in \eqref{eq:JohnstabconC2norma} does not depend
on $M$ (as defined in~\eqref{bound-gradient}), differently from that appearing in \eqref{eq:Johnraddistabilityprova}.

The dependence of the constants $C$ on $c$ in \eqref{eq:generapseudodistC2normajohndomain} and \eqref{eq:relaxed_asymmetrystabconnormaC^2} could be replaced with the dependence on $| \Ga |$, at least when ${\mathcal{H}}^{N-1}(\pa^* \om)$ is small enough. In fact, when, e.g.,
\begin{equation}\label{eq:assumperimetroom}
{\mathcal{H}}^{N-1}(\pa^* \om) < \frac{1}{2K} \left(\frac{d_\Om}{4 b_0} \right)^N ,
\end{equation}
by exploiting \eqref{eq:value_c}, \eqref{eq:ecchec} and \eqref{eq:stimaJohninradius} we have that
\begin{eqnarray*}&&
c |\Ga| = | \Om| - | \om| - \int_{ \pa^* \om} u_\nu \, d \cH^{N-1} \ge
| \Om| - | \om| -K{\mathcal{H}}^{N-1}(\pa^* \om)\\&&\qquad
\ge |B_1| \left( \frac{d_\Om}{4 b_0} \right)^N
-K{\mathcal{H}}^{N-1}(\pa^* \om)\ge
\frac{|B_1|}2 \left( \frac{d_\Om}{4 b_0} \right)^N .
\end{eqnarray*}
As a result,
$$
c \ge \frac{|B_1|}{2|\Gamma|} \left( \frac{d_\Om}{4 b_0} \right)^N
.
$$

On the other hand, in this case, by \eqref{eq:value_c}, \eqref{eq:disuguaglianze per upper bound c}, and \eqref{eq:assumperimetroom}, we also get a suitable upper bound for $c$, that is
\begin{eqnarray*}&&
c \le \frac{1}{|\Gamma|}
\left(| \Om| - | \om| - \int_{ \pa^* \om} u_\nu \, d \cH^{N-1}  \right)
\le 
\frac{d_\Om}{2N} +\frac1{|\Gamma|}K{\mathcal{H}}^{N-1}(\pa^* \om) 
\le 
\frac{d_\Om}{2N} +\frac1{2|\Gamma|}\left(\frac{d_\Om}{4 b_0} \right)^N
.
\end{eqnarray*}
}
\end{rem}

\subsection{A new different choice for \texorpdfstring{$z$}{z} and proof of Theorem \ref{MAIN-TRE}}\label{subsec:differentchoicesofz}

Our next goal is to show that
it is possible to obtain Theorem~\ref{thm:stability_radii}
and its consequences in Theorem \ref{JAHS334},
with $\tau_N$ given in Theorem~\ref{thm:serrin-W22-stability}
%
%
and in the more general setting \eqref{otto.2}, provided that
we make a different choice of $z$. 

The main difficulty in this setting is
that no regularity information is
available on $\pa \om$ (not even being $\Om \setminus \ol{ \om}$ a John domain),
and therefore we cannot apply Poincar\'e inequalities on all $\Om \setminus \ol{\om}$,
making it difficult
to establish an appropriate variant of Theorem \ref{thm:serrin-W22-stability}.

The key idea to overcome this difficulty
is to perform the necessary
Poincar\'e inequalities on a suitable subset of $\Om \setminus \ol{\om}$. Such a suitable subset is $\Om^c_{r_i}$, where we are using the notation introduced in \eqref{A4bis} and \eqref{def:complementary parallel set}. 
Notice that, by~\eqref{A4bis}, it holds that
$$
\Om_{r_i}^c \subset \Om \setminus \ol{\om} .
$$

With this setting, we start with the following estimate:

\begin{lem}
\label{lem:application-refined-Lp-estimate-oscillation-generic-v}
Let $\Om \setminus \ol{\om} \subset\RR^N$
satisfy \eqref{A4bis}, and suppose that~$\Ga$
is of class~$C^1$.
Let $v$ be a harmonic function in $\Om \setminus \ol{\om} $ of
class $C^1 (\ol{\Om^c_{r_i /2}})$, and let~$G$ be an upper bound for the gradient of $v$ on $\ol{\Om^c_{r_i/2}}$.
\par
Then, given~$p\ge1$, there exist two positive
constants $\tilde{a}_{N,p}$ and $\tilde{\al}_{N,p}$ depending only on $N$ and $p$ such that if
\begin{equation}
\label{application-refined-smallness-generic-v}
\nr v - v_{\Om^c_{r_i} } \nr_{p, \Om^c_{r_i} } \le \tilde{\al}_{N,p} \, r_{i}^{\frac{N+p}{p}}  G  ,
\end{equation}
then we have that
\begin{equation}
\label{application-refined-Lp-stability-generic-v}  
\max_{\Ga} v - \min_{\Ga} v \le \tilde{a}_{N,p} \,  G^{ \frac{N}{N+p} } \, \nr v - v_{\Om^c_{r_i} } \nr_{p, \Om^c_{r_i} }^{ p/(N+p) }.
\end{equation} 
\end{lem}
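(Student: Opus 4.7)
The plan is to closely follow the proof of Lemma \ref{lem:refined_Lp-estimate-oscillation-generic-v}, with the only significant change being that we must now localize all estimates to the complementary parallel set $\Om^c_{r_i}$. Accordingly, I would specialize the ``target value'' (denoted $\la$ in that proof) to be the mean value $v_{\Om^c_{r_i}}$, so that the mean-value comparison built into the statement is baked in from the very beginning.

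First, choose $\ol{x} \in \Ga$ maximizing $|v(x) - v_{\Om^c_{r_i}}|$ over $\Ga$, so that trivially
\begin{equation*}
\max_\Ga v - \min_\Ga v \le 2\,|v(\ol{x}) - v_{\Om^c_{r_i}}|.
\end{equation*}
By \eqref{A4bis}, the point $x_0 := \ol{x} - r_i\,\nu(\ol{x})$ is the center of a ball $B_{r_i}(x_0) \subset \Om\setminus\ol{\om}$ whose closure touches $\Ga \cup \pa\om$ only at $\ol{x}$. For $0 < \si \le r_i/2$, I would then introduce the interior point $\ol{y}_\si := \ol{x} - \si\,\nu(\ol{x})$; a direct check yields the two key inclusions $B_\si(\ol{y}_\si) \subset B_{r_i}(x_0) \subset \Om\setminus\ol{\om}$ (so $v$ is harmonic on this ball) and $B_\si(\ol{y}_\si) \subset \Om^c_{r_i}$, the latter because every point of $B_\si(\ol{y}_\si)$ lies at distance at most $2\si \le r_i$ from $\Ga$.

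Combining the fundamental theorem of calculus along the segment from $\ol{x}$ to $\ol{y}_\si$ (which is contained in $\ol{\Om^c_{r_i/2}}$, hence is subject to the gradient bound $G$), the mean value property for harmonic functions on $B_\si(\ol{y}_\si)$, and H\"older's inequality together with the monotonicity of the $L^p$-norm in the integration domain (which allows enlarging from $B_\si(\ol{y}_\si)$ to $\Om^c_{r_i}$), the plan is to derive
\begin{equation*}
\max_\Ga v - \min_\Ga v \le 2\left[\frac{\nr v - v_{\Om^c_{r_i}} \nr_{p,\Om^c_{r_i}}}{|B_1|^{1/p}\,\si^{N/p}} + \si\,G\right]
\end{equation*}
for every $\si \in (0, r_i/2]$. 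Minimizing the right-hand side over $\si$ yields the balancing value
\begin{equation*}
\si^* := \left(\frac{N\,\nr v - v_{\Om^c_{r_i}} \nr_{p,\Om^c_{r_i}}}{p\,|B_1|^{1/p}\,G}\right)^{p/(N+p)};
\end{equation*}
substituting $\si^*$ above produces \eqref{application-refined-Lp-stability-generic-v} with an explicit constant $\tilde{a}_{N,p}$ depending only on $N$ and $p$.

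The smallness assumption \eqref{application-refined-smallness-generic-v} is exactly what is needed to guarantee $\si^* \le r_i/2$, so that the minimization takes place in the admissible range; unraveling this inequality gives the explicit value
\begin{equation*}
\tilde{\al}_{N,p} := \frac{p\,|B_1|^{1/p}}{N\cdot 2^{(N+p)/p}}.
\end{equation*}
I foresee no real obstacle: the only delicate point compared with Lemma \ref{lem:refined_Lp-estimate-oscillation-generic-v} is that the admissible range shrinks from $\si \le r_i$ to $\si \le r_i/2$, dictated by the requirement $B_\si(\ol{y}_\si) \subset \Om^c_{r_i}$, which in turn is exactly why the hypothesis bounds $|\nabla v|$ by $G$ on the smaller set $\ol{\Om^c_{r_i/2}}$ rather than just on $\ol{\Om^c_{r_i}}$.
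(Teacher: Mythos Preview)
Your proof is correct and follows essentially the same approach as the paper's own proof: the paper simply invokes Lemma~\ref{lem:refined_Lp-estimate-oscillation-generic-v} with $r_i$ replaced by $r_i/2$ and $\la:=v_{\Om^c_{r_i}}$, then uses the inclusion $B_{r_i/2}(x_0)\subset\Om^c_{r_i}$ to pass from the $L^p$-norm on the small ball to the $L^p$-norm on $\Om^c_{r_i}$, whereas you have unrolled that application and carried out the mean-value/optimization argument explicitly. The key geometric observations---$B_\si(\ol y_\si)\subset\Om\setminus\ol\om$ for harmonicity, $B_\si(\ol y_\si)\subset\Om^c_{r_i}$ for the $L^p$-comparison, and the restriction $\si\le r_i/2$ forcing the modified threshold $\tilde\al_{N,p}$---coincide with those underlying the paper's shortcut.
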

\begin{proof}
The desired result follows by applying Lemma \ref{lem:refined_Lp-estimate-oscillation-generic-v} with $r_i := r_i/2$ and $\la := v_{\Om^c_{r_i}}$. 
Indeed, since
$$
B_{r_i /2} (x_0) \subset \Om^c_{r_i} ,
$$
it holds that
$$
\nr v - v_{\Om^c_{r_i}} \nr_{p, B_{r_i /2}(x_0) }\le \nr v - v_{\Om^c_{r_i}} \nr_{p, \Om_{r_i}^c } ,
$$
and \eqref{application-refined-smallness-generic-v}, \eqref{application-refined-Lp-stability-generic-v} follow from \eqref{eq:suppalla_refined-smallness-generic-v}, \eqref{eq:suppalla_refined-Lp-stability-generic-v}.
\end{proof}

We recall that, thanks to \eqref{A4bis},
$\Ga_{r_i}$ inherits the same regularity of $\Ga$. More precisely, 
%
%
we have that:
\begin{equation}\label{eq:tubneighbregularity}
\Ga \, \text{ is } \, C^k \quad \Longrightarrow \quad  \Ga_{r_i} \, \text{ is } \, C^k , \quad \text{for } k \ge 1.
\end{equation}
This fact relies on the regularity of the distance function. The case $k \ge 2$ has been proved in Appendix of \cite{GT} (see also \cite[Theorem 3]{KrantzParks-distance}).
The case $k=1$ can be deduced from \cite[Theorem 2]{KrantzParks-distance}, but we do not need this refinement here. 
%
%

Moreover, we have that:
\begin{lem}\label{lem:boprovoachiamarlosferainternapertubulare}
Assume that \eqref{A4bis} holds true. Then, the domain $\Om^c_{r_i}$ in~\eqref{def:complementary parallel set}
satisfies the uniform interior sphere condition with radius $r_i /2$.
\end{lem}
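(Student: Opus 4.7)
The plan is to exhibit, at each boundary point of $\Om^c_{r_i}$, an open interior ball of radius $r_i/2$ whose closure contains the given point. The topological boundary decomposes as $\partial \Om^c_{r_i} = \Ga \cup \Ga_{r_i}$, and by \eqref{eq:tubneighbregularity} the inner boundary $\Ga_{r_i}$ inherits the $C^{2,\al}$ regularity of $\Ga$ (which is $C^{2,\al}$ by \eqref{noWea}). The construction then splits into two cases, one for each piece of the boundary.

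For a point $p\in\Ga$, I would set $c:= p - (r_i/2)\nu(p)$, where $\nu$ denotes the outer unit normal to $\Om$, and consider the open ball $B:=B_{r_i/2}(c)$. By \eqref{A4bis} the larger ball $B_{r_i}(p-r_i\nu(p))$ is contained in $\Om\setminus\ol{\om}$; since the centers are at distance $r_i/2$ and the radii satisfy $r_i/2 + r_i/2 \le r_i$, we deduce $B\subset B_{r_i}(p-r_i\nu(p))\subset \Om$. For every $y\in B$, the triangle inequality gives
$$
\de_\Ga(y) \le |y-p| \le |y-c| + |c-p| < \frac{r_i}{2} + \frac{r_i}{2} = r_i,
$$
so $y\in\Om^c_{r_i}$; moreover $|p-c|=r_i/2$ guarantees $p\in\partial B$.

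For a point $p\in\Ga_{r_i}$, the plan is to reduce to the previous case via the nearest-point projection. Let $\tilde p\in\Ga$ be a closest point of $\Ga$ to $p$, so $|p-\tilde p|=r_i$. First-order optimality of the squared distance $|p-\cdot|^2$ restricted to the $C^{2,\al}$-hypersurface $\Ga$ forces $p-\tilde p$ to be parallel to $\nu(\tilde p)$; since $p\in\Om$, this yields $p=\tilde p - r_i\nu(\tilde p)$. I then take $c:= \tilde p - (r_i/2)\nu(\tilde p)$. The previous case applied at $\tilde p$ gives $B_{r_i/2}(c)\subset\Om^c_{r_i}$, and a direct computation shows $|p-c|=r_i/2$, so $p\in \partial B_{r_i/2}(c)$.

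The main obstacle I expect is justifying uniqueness of the nearest-point projection used in the second case (which is needed so that $c$ is well-defined): if two distinct $\tilde p_1,\tilde p_2\in\Ga$ were both nearest points of the same $p\in\Ga_{r_i}$, the argument above would give $p=\tilde p_1-r_i\nu(\tilde p_1)=\tilde p_2-r_i\nu(\tilde p_2)$, whence the interior ball $B_{r_i}(p)\subset\Om\setminus\ol{\om}$ provided by \eqref{A4bis} at $\tilde p_1$ would have $\tilde p_2$ on its closure, contradicting the single-contact clause of \eqref{A4bis}. This resolves the obstacle and completes the verification of the interior sphere condition with radius $r_i/2$ on $\Om^c_{r_i}$.
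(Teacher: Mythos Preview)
Your proof is correct and takes essentially the same approach as the paper: the same ball $B_{r_i/2}\bigl(\tfrac{x+y}{2}\bigr)$, with $x\in\Ga$ and $y=x-r_i\nu(x)\in\Ga_{r_i}$, serves as the interior touching ball at both $x$ and $y$, and you have simply split this single observation into two cases with fuller justifications (including the point, left implicit in the paper, that every $p\in\Ga_{r_i}$ is of the form $\tilde p-r_i\nu(\tilde p)$). One remark: uniqueness of the nearest-point projection is not actually needed---picking \emph{any} nearest $\tilde p$ already yields a valid interior ball at $p$---so your final paragraph, though correct, is superfluous.
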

\begin{proof}
%
%
%
For any $y \in \Ga_{r_i}$ and $x \in \Ga$  such that
$y= x - r_i \nu(x)$, by \eqref{A4bis} and definition \eqref{def:complementary parallel set}, we have that
$B_{r_i /2}\left(\frac{x+y}{2}\right) \subset  \Om^c_{r_i}$.
It follows that~$B_{r_i /2}(\frac{x+y}{2})$ is an interior touching ball in $\Om^c_{r_i}$ (at $x$ and $y$).
\end{proof}

In order to use the Poincar\'e inequality of item (ii) of Corollary~\ref{cor:JohnPoincareaigradienti} (with $v=h$ and $D= \Om^c_{r_i} $), we have to make a new appropriate choice of $z$.

To this aim, a possible choice of $z$ is:
\begin{equation}\label{eq:choicezperPoincaresutubularneighborood}
z := \frac{1}{|\Om^c_{r_i} |} \left\lbrace \int_{\Om^c_{r_i}} x \, dx - N \int_{\Ga_{r_i}} u \nu 
\, d\cH^{N-1} \right\rbrace  ,
\end{equation}
where
\begin{equation}\label{GARO}
\Ga_{r_i} := \{ y\in\Om: \de_\Ga (y) = r_i \} .
\end{equation}
We remark that, with the choice in~\eqref{eq:choicezperPoincaresutubularneighborood},
by \eqref{u9iwhschhss} and Green's identity
it follows that
\begin{equation}\label{eq:choicezperPoincaresutubularneighborood:SOTTO}
\int_{\Om^c_{r_i}} \na h \, dx = 0   .
\end{equation}

We are now in position to prove the counterpart of
Theorem~\ref{thm:serrin-W22-stability} in the present setting.
%
%
%
\begin{thm}
\label{thm:Tubularneighborood-serrin-W22-stability} 
Let $\Om \setminus \ol{\om} \subset\RR^N$ satisfy \eqref{A4bis}, and suppose that~$\Ga$
is of class~$C^1$.
Let~$u$ satisfy \eqref{eq:problem}, $u \in C^1 \left( \left( \Om \setminus \ol{\om} \right) \cup \Ga \right)$, and suppose that~$u \le 0$ on $\pa \om$.
Let~$q$
be as in \eqref{quadratic} with $z$ chosen as in \eqref{eq:choicezperPoincaresutubularneighborood},
and assume that~$z$ belongs to $\Om$.
Let~$h$ be as in~\eqref{DEh}.

Then, there exists a positive constant $C$ such that
\begin{equation}\label{eq:tubular-W2,2_modificato}
\rho_e-\rho_i\le C\, \nr \dist \left( x, \pa \Om^c_{r_i} \right)^{\frac{1}{2} } \, \na^2 h  \nr_{2, \Om^c_{r_i} }^{\tau_N} ,
\end{equation}
with the following specifications:
\begin{enumerate}[(i)]
\item $\tau_2 = 1$;
\item $\tau_3$ is arbitrarily close to~$1$, in the sense that, for any $\theta\in(0,1)$ sufficiently
small, there exists a positive constant $C$ such that  \eqref{eq:tubular-W2,2_modificato} holds with $\tau_3 = 1- \theta$;
\item $\tau_N = 2/(N-1)$ for $N \ge 4$.
\end{enumerate}

The constant $C$
depends on $N$, $r_i$, $d_\Om$, $\max_{\ol{\Om^c_{r_i /2}}} |\na u|$,
and $\theta$ (the latter, only in the case $N=3$).
\end{thm}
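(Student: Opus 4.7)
The argument parallels the proof of Theorem \ref{thm:serrin-W22-stability}, but with every Poincaré- or Sobolev-type inequality performed on the annular set $D := \Om^c_{r_i}$ in place of $\Om \setminus \ol{\om}$. The choice of $z$ in \eqref{eq:choicezperPoincaresutubularneighborood} is tailored precisely so that \eqref{eq:choicezperPoincaresutubularneighborood:SOTTO} holds, i.e., $\int_D \na h\,dx = 0$, which enables the use of item (ii) of Corollary \ref{cor:JohnPoincareaigradienti} on $D$. Crucially, by Lemma \ref{lem:boprovoachiamarlosferainternapertubulare}, $D$ satisfies the uniform interior sphere condition with radius $r_i/2$, and by \eqref{eq:tubneighbregularity} applied to the $C^{2,\al}$ boundary $\Ga$ (guaranteed by \eqref{noWea}), $\pa D = \Ga \cup \Ga_{r_i}$ is of class $C^1$; hence Lemma \ref{lem:John-two-inequalities} and Corollary \ref{cor:JohnPoincareaigradienti} apply to $D$ with constants depending only on $N$, $r_i$, and $d_\Om$, since $d_D \le d_\Om$ and $|D| \le |B_1|(d_\Om/2)^N$.

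The first step is to replace Lemma \ref{lem:L2-estimate-oscillation} by a version built on Lemma \ref{lem:application-refined-Lp-estimate-oscillation-generic-v}. Taking $G := \max_{\ol{\Om^c_{r_i/2}}}|\na u| + d_\Om/N$ as an upper bound for $|\na h|$ on $\ol{\Om^c_{r_i/2}}$ (recall \eqref{u9iwhschhss}), the same dichotomy employed in the proof of Lemma \ref{lem:L2-estimate-oscillation}---comparing $\nr h - h_D\nr_{p,D}$ with $\tilde\al_{N,p}\,r_i^{(N+p)/p}\,G$---combined with \eqref{oscillation}, yields
\[
\rho_e - \rho_i \le C\,\nr h - h_D\nr_{p,D}^{p/(N+p)},
\]
with $C$ depending only on $N$, $p$, $r_i$, $d_\Om$, and $\max_{\ol{\Om^c_{r_i/2}}}|\na u|$.

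Next, for each dimension I would repeat verbatim the three-step chain used in the proof of Theorem \ref{thm:serrin-W22-stability}, with $D$ replacing $\Om \setminus \ol{\om}$. For $N = 2$, the Sobolev embedding $W^{1,4}(D)\hookrightarrow C^0(\ol D)$ (valid since $\pa D$ is $C^1$, with embedding constant controlled by $r_i$ and $d_\Om$ as in \cite[Remark 2.9]{MP3}) applied to $h - h_D$, chained with the Poincaré inequality from Lemma \ref{lem:John-two-inequalities} (exponents $r = p = 4$, $\al = 0$) and then with item (ii) of Corollary \ref{cor:JohnPoincareaigradienti} (exponents $r = 4$, $p = 2$, $\al = 1/2$, applicable thanks to \eqref{eq:choicezperPoincaresutubularneighborood:SOTTO}), bounds $\max_\Ga h - \min_\Ga h$ by a constant multiple of $\nr \dist(\cdot,\pa D)^{1/2}\,\na^2 h\nr_{2,D}$. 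For $N = 3$ (with exponent shift $\theta$) and $N \ge 4$, the identical chains of Corollary \ref{cor:JohnPoincareaigradienti}(ii) and Lemma \ref{lem:John-two-inequalities} used in items (ii) and (iii) of the proof of Theorem \ref{thm:serrin-W22-stability}, now on $D$, give the corresponding $L^p$ bound on $h - h_D$. Feeding each of these into the oscillation estimate from the previous paragraph produces \eqref{eq:tubular-W2,2_modificato} with the claimed exponents $\tau_N$.

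The main technical obstacle is checking that all the Sobolev and Poincaré constants appearing on $D = \Om^c_{r_i}$ depend only on the admissible parameters $N$, $r_i$, $d_\Om$ (and $\theta$ when $N = 3$). This is supplied by Lemma \ref{lem:boprovoachiamarlosferainternapertubulare} (uniform interior sphere on $D$ of radius $r_i/2$), by the explicit estimates \eqref{eq:estimatePoincaremean} and \eqref{eq:estimate_mupp_meanPoincare} (with $d_D \le d_\Om$ and $|D| \le |B_1|(d_\Om/2)^N$), and by the $C^1$-regularity of $\pa D$ furnished by \eqref{eq:tubneighbregularity} together with \eqref{noWea}; the dependence on $\max_{\ol{\Om^c_{r_i/2}}}|\na u|$ enters only through the oscillation step, exactly as $M$ entered through Lemma \ref{lem:L2-estimate-oscillation} in the original proof.
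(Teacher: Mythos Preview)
Your proposal is correct and follows essentially the same approach as the paper: transplant the proof of Theorem~\ref{thm:serrin-W22-stability} onto $D = \Om^c_{r_i}$, using Lemma~\ref{lem:application-refined-Lp-estimate-oscillation-generic-v} for the oscillation estimate and Lemma~\ref{lem:boprovoachiamarlosferainternapertubulare} to enable the Poincar\'e and Sobolev inequalities on $D$. One small correction: your appeal to \eqref{noWea} is unwarranted here since the hypotheses of this theorem do not include \eqref{eq:overdetermination}; the $C^1$ regularity of $\Ga$ is assumed directly, and \eqref{eq:tubneighbregularity} with $k=1$ already yields the $C^1$ regularity of $\Ga_{r_i}$ (and in fact the paper's proof relies only on the interior sphere condition from Lemma~\ref{lem:boprovoachiamarlosferainternapertubulare}, not on boundary regularity of $D$).
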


\begin{proof}
By using Lemma~\ref{lem:application-refined-Lp-estimate-oscillation-generic-v}
with $v=h$ and reasoning as in Lemma \ref{lem:L2-estimate-oscillation} we see that
$$\rho_e-\rho_i \le C \, \nr h - h_{\Om^c_{r_i}} \nr_{p, \Om^c_{r_i} }^{ p/(N+p) }.$$

Then, we modify the proof of Theorem~\ref{thm:serrin-W22-stability} by using the Poincar\'e inequalities of Lemma~\ref{lem:John-two-inequalities}
and Corollary~\ref{cor:JohnPoincareaigradienti} with $v=h$ on $D= \Om^c_{r_i}$ (instead of taking~$D=\Om \setminus \ol{\om}$).

By 
%
%
Lemma \ref{lem:boprovoachiamarlosferainternapertubulare} we have that the domain $\Om^c_{r_i}$ satisfies the uniform interior sphere condition with radius $r_i/2$, and hence Lemma~\ref{lem:John-two-inequalities}
and Corollary~\ref{cor:JohnPoincareaigradienti} can be applied with $D= \Om^c_{r_i}$.
Also, in light of~\eqref{eq:choicezperPoincaresutubularneighborood:SOTTO},
we have that the choice in~\eqref{eq:choicezperPoincaresutubularneighborood} for $z$ guarantees
that the Poincar\'e inequalities of Corollary~\ref{cor:JohnPoincareaigradienti} can be applied with $v=h$ and $D= \Om^c_{r_i}$.

With these modifications and proceeding as in the proof of Theorem~\ref{thm:serrin-W22-stability}, instead of \eqref{eq:C-provastab-serrin-W22} we obtain the refined estimate \eqref{eq:tubular-W2,2_modificato}
(in the case $N=2$, also
the Sobolev inequality \eqref{eq:immersionSerrinN2VERSIONENEW} has to
be performed here with $\Om \setminus \ol{ \om}$ replaced by $\Om^c_{r_i}$).
%
%
\end{proof}

{F}rom the previous work, we can now obtain
the counterpart of Theorem~\ref{thm:stability_radii} under the relaxed
assumptions~\eqref{A3bis}, \eqref{A4bis}:

\begin{thm}[General stability result for $\rho_e - \rho_i$ under relaxed assumptions]
\label{thm:tubular.relaxed_stability_radii}
Let $u \in C^2 (\ol{\Om} \setminus \om)$ satisfy \eqref{eq:problem} and \eqref{eq:overdetermination}, and
suppose that~$u \le 0$ on $\pa \om$.
Let assumptions~\eqref{A3bis} and~\eqref{A4bis} be verified.
Assume also that the point $z$ chosen as in~\eqref{eq:choicezperPoincaresutubularneighborood} belongs to~$\Om$.

If $\psi:[0,\infty)\to[0,\infty)$ is a continuous function vanishing at $0$ and satisfying \eqref{8567487hjifhew},
%
%
then
\begin{equation*}
\rho_e-\rho_i\le C \, \psi(\eta)^{\tau_N/2},
\end{equation*}
with $\tau_N$ as in Theorem~\ref{thm:Tubularneighborood-serrin-W22-stability}
and $C$ depending on $N$, $r_i$, $d_\Om$, and~$\max_{\ol{\Om^c_{r_i /2}}} |\na u|$.
\end{thm}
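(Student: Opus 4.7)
The plan is to mirror the proof of Theorem~\ref{thm:stability_radii}, replacing Theorem~\ref{thm:serrin-W22-stability} with its tubular-neighborhood counterpart, Theorem~\ref{thm:Tubularneighborood-serrin-W22-stability}. First I would invoke that latter result to reduce the problem to estimating the weighted $L^2$-norm $\bigl\|\dist(\cdot,\partial\Omega^c_{r_i})^{1/2}\nabla^2 h\bigr\|_{L^2(\Omega^c_{r_i})}$, since from it we immediately get $\rho_e - \rho_i \le C \bigl\|\dist(\cdot,\partial\Omega^c_{r_i})^{1/2}\nabla^2 h\bigr\|_{L^2(\Omega^c_{r_i})}^{\tau_N}$. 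The remaining task will then be to bound this quantity by $C\,\psi(\eta)^{1/2}$.

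Next I would observe that assumption~\eqref{A4bis} forces $\dist(\Gamma,\partial\omega) \ge 2r_i$, so that for every $x \in \Omega^c_{r_i}$ the nearest boundary point of $\Omega\setminus\overline{\omega}$ lies on $\Gamma$. The touching-ball argument of Lemma~\ref{lem:relationdist}, which only uses the interior-sphere condition on $\Gamma$, therefore still yields $-u(x) \ge \frac{r_i}{2N}\,\delta_\Gamma(x)$ on $\Omega^c_{r_i}$; combined with $\dist(x,\partial\Omega^c_{r_i}) \le \delta_\Gamma(x)$ (which follows from $\Gamma\subset\partial\Omega^c_{r_i}$), this gives $\dist(x,\partial\Omega^c_{r_i}) \le \frac{2N}{r_i}(-u(x))$ on $\Omega^c_{r_i}$. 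Rewriting $|\nabla^2 h|^2 = |\nabla^2 u|^2 - (\Delta u)^2/N$ via the Cauchy-Schwarz identity~\eqref{eq:CSintermsofh}, we deduce
$$\bigl\|\dist(\cdot,\partial\Omega^c_{r_i})^{1/2}\nabla^2 h\bigr\|_{L^2(\Omega^c_{r_i})}^2 \le \frac{N}{r_i}\int_{\Omega\setminus\overline{\omega}}(-u)\cdot 2\left\{|\nabla^2 u|^2 - \frac{(\Delta u)^2}{N}\right\}dx.$$

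The argument would then be closed by invoking the Rellich-Pohozaev identity of Corollary~\ref{cor:Identitydoposovradeterminazione}, which, under assumption~\eqref{A3bis}, is valid with $\partial^*\omega$ and the measure-theoretic outer normal in place of $\partial\omega$ and $\nu$, thanks to De Giorgi's structure theorem (as discussed at the beginning of Section~\ref{sec:relaxing assumptions}). After translating coordinates so that the origin lies in $\Omega$, the pointwise bound $|\langle x,\nu\rangle| \le d_\Omega$ on $\partial^*\omega$ allows each of the resulting boundary integrals to be dominated by one of the five quantities estimated in~\eqref{8567487hjifhew}, with multiplicative constants depending only on $N$, $d_\Omega$, $c$, and the $L^\infty$-norm of $\nabla u$ on $\overline{\Omega^c_{r_i/2}}$. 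The apparent dependence on $c$ is absorbed into the dependence on $\max_{\overline{\Omega^c_{r_i/2}}}|\nabla u|$, since $c = u_\nu$ on $\Gamma \subset \overline{\Omega^c_{r_i/2}}$. Putting all pieces together yields $\bigl\|\dist(\cdot,\partial\Omega^c_{r_i})^{1/2}\nabla^2 h\bigr\|_{L^2(\Omega^c_{r_i})}^2 \le C\,\psi(\eta)$, and raising to the $\tau_N/2$-th power completes the proof.

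The main subtlety, and the reason this strategy carries through in the setting~\eqref{A3bis}--\eqref{A4bis} without any John-type condition on $\Omega\setminus\overline{\omega}$, is the observation that both the uniform interior sphere condition (with radius $r_i/2$, via Lemma~\ref{lem:boprovoachiamarlosferainternapertubulare}) and the linear lower bound for $-u$ survive on $\Omega^c_{r_i}$ irrespective of the regularity of $\partial\omega$. The tailored choice~\eqref{eq:choicezperPoincaresutubularneighborood} of $z$ is precisely what enforces $\int_{\Omega^c_{r_i}}\nabla h\,dx = 0$ and hence enables the mean-value Poincar\'e inequality used inside Theorem~\ref{thm:Tubularneighborood-serrin-W22-stability}; beyond this bookkeeping, I do not foresee any serious obstacle.
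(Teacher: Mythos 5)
Your proposal is correct and follows essentially the same strategy as the paper. The only noteworthy divergence is in how you obtain the linear lower bound $-u \gtrsim \dist(\cdot,\partial\Omega^c_{r_i})$ on $\Omega^c_{r_i}$: you run the touching-ball argument of Lemma~\ref{lem:relationdist} directly on $\Omega\setminus\overline\omega$, exploiting the fact that for $x\in\Omega^c_{r_i}$ the nearest boundary point lies on $\Gamma$, so \eqref{A4bis} suffices and one gets the coefficient $r_i/(2N)$; the paper instead appeals to Lemma~\ref{lem:boprovoachiamarlosferainternapertubulare} to endow $\Omega^c_{r_i}$ with the uniform interior sphere condition of radius $r_i/2$, and then applies Lemma~\ref{lem:relationdist} to $\Omega^c_{r_i}$ itself (checking $u\le 0$ on $\Gamma_{r_i}$ by the maximum principle), obtaining the slightly worse coefficient $r_i/(4N)$. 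The two routes are equivalent in substance; yours is marginally more direct and bypasses Lemma~\ref{lem:boprovoachiamarlosferainternapertubulare} at this step, while the paper's phrasing has the merit of being a verbatim reuse of an already-proved lemma on the new domain $\Omega^c_{r_i}$. The remaining ingredients — invoking Theorem~\ref{thm:Tubularneighborood-serrin-W22-stability}, enlarging the domain of integration via the maximum-principle sign of $u$, and closing with \eqref{eq:CSintermsofh} together with \eqref{eq:FIdconsovradeterminazione} on $\partial^*\omega$ — match the paper exactly, and your remark that the apparent dependence on $c$ is absorbed into $\max_{\ol{\Omega^c_{r_i/2}}}|\nabla u|$ is a fair accounting for the constant's declared dependencies.
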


\begin{proof}
In the notation of~\eqref{GARO} we have that~$\pa \Om^c_{r_i}= \Ga \cup \Ga_{r_i}$, and, by recalling \eqref{noWea} and \eqref{eq:tubneighbregularity},
%
%
$\Om^c_{r_i}$ is of class $C^2$. 
Moreover, since $\Om^c_{r_i}$ satisfies the uniform interior
sphere condition with radius $r_i/2$,
Lemma~\ref{lem:relationdist} can be applied to~$\Om^c_{r_i}$
in place of~$\Omega\setminus\overline\omega$.
Hence, in this setting, formula~\eqref{eq:relationdist}
can be rephrased as
\begin{equation}\label{BYUS}
-u(x) \ge \frac{r_i}{4 N} \dist \left( x, \pa \Om^c_{r_i} \right) \quad \text{for any} \quad x \in \Om^c_{r_i}.
\end{equation}
Moreover, by the maximum principle,
\begin{equation}\label{BYUS2} -u \ge 0 \quad \text{ on } \quad \Om \setminus \ol{ \om } .\end{equation} 
Hence, since
$$ \Om^c_{r_i} \subset \Om \setminus \ol{\om} ,$$
we deduce from~\eqref{eq:tubular-W2,2_modificato},
\eqref{BYUS} and~\eqref{BYUS2} that
\begin{eqnarray*}&&
\rho_e-\rho_i
\le C\, \left( \int_{\Om^c_{r_i} } \dist \left( x, \pa \Om^c_{r_i} \right)\, |\na^2 h|^2 \, dx \right)^{ \tau_N / 2 } 
\\&&\qquad\le C\, \left( \int_{\Om^c_{r_i} } (-u) |\na^2 h|^2 \, dx \right)^{ \tau_N / 2 } 
\le C\, \left( \int_{\Om \setminus \ol{\om} } (-u) |\na^2 h|^2 \, dx \right)^{ \tau_N / 2 } .
\end{eqnarray*}
Consequently, the desired result follows from \eqref{eq:FIdconsovradeterminazione} (with $\pa \om$ replaced by $\pa^*\om$) and \eqref{eq:CSintermsofh}.
\end{proof}

As a consequence of Theorem \ref{thm:tubular.relaxed_stability_radii}, we thus have the following generalization of \eqref{eq:stabconnormaC^2}:
\begin{thm}\label{IUhtkS45}
Let $u \in C^2 (\ol{\Om} \setminus \om)$ satisfy \eqref{eq:problem} and \eqref{eq:overdetermination},
and suppose that~$u \le 0$ on $\pa \om$.
Let assumptions~\eqref{A3bis} and~\eqref{A4bis} be verified,
and~$z$ be as in~\eqref{eq:choicezperPoincaresutubularneighborood}.

Assume that there exists $K >0$ such that 
\begin{equation*}
\nr u \nr_{C^2 (\pa \om)} \le K ,
\end{equation*}
and that $z \in \Om$.

Then,
\begin{equation*}
\rho_e-\rho_i\le C \,\big({\mathcal{H}}^{N-1}(\pa^* \om )\big)^{\tau_N/2},
\end{equation*}
where $\tau_N$ are as in Theorem~\ref{thm:Tubularneighborood-serrin-W22-stability} and $C$ is a positive
constant depending on $N$, $r_i$, $d_\Om$, and~$K$.
\end{thm}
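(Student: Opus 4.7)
The plan is to reduce Theorem \ref{IUhtkS45} directly to Theorem \ref{thm:tubular.relaxed_stability_radii} by verifying hypothesis \eqref{8567487hjifhew} and then cleaning up the constants. Specifically, the $C^2$-bound $\|u\|_{C^2(\partial\omega)}\le K$ (which, by our convention on $C^2(\overline\Omega\setminus\omega)$ and the Whitney extension discussion in Section \ref{NOATZ}, controls $u$, $\nabla u$, $\nabla^2 u$ pointwise at $\mathcal{H}^{N-1}$-a.e.\ point of $\partial^*\omega$) makes each of the five integrands in \eqref{8567487hjifhew} bounded by a fixed power of $K$. Thus each of the five quantities is bounded by $\max\{K,K^2,K^3\}\,\mathcal{H}^{N-1}(\partial^*\omega)$, so \eqref{8567487hjifhew} holds with $\psi(\eta):=\max\{K,K^2,K^3\}\,\eta$ and $\eta:=\mathcal{H}^{N-1}(\partial^*\omega)$.

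Applying Theorem \ref{thm:tubular.relaxed_stability_radii} then yields the desired estimate $\rho_e-\rho_i\le C\,\bigl(\mathcal{H}^{N-1}(\partial^*\omega)\bigr)^{\tau_N/2}$, with the exponents $\tau_N$ of Theorem \ref{thm:Tubularneighborood-serrin-W22-stability}. The resulting constant depends on $N$, $r_i$, $d_\Omega$, $K$, and on $\max_{\overline{\Omega^c_{r_i/2}}}|\nabla u|$, so the remaining task is to eliminate this last dependence in favour of $K$.

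To do so I would mimic the argument at the end of the proof of Theorem \ref{JAHS334}. Since $u$ is sub-/superharmonic-type with $\Delta u=1$ in $\Omega^c_{r_i/2}\subset\Omega\setminus\overline\omega$, the maximum principle applied to the subharmonic function $|\nabla u|^2$ (equivalently, the maximum of $|\nabla u|$) yields that $|\nabla u|$ achieves its supremum on $\Gamma\cup\partial\omega$; on $\Gamma$ it equals $c$ by \eqref{eq:overdetermination}, and on $\partial\omega$ it is $\le K$. Hence $\max_{\overline{\Omega^c_{r_i/2}}}|\nabla u|\le\max\{c,K\}$. To bound $c$, apply \eqref{eq:value_c} to get $c|\Gamma|\le|\Omega|+K\,\mathcal{H}^{N-1}(\partial^*\omega)$; then combine the isoperimetric inequality $|\Gamma|\ge N|B_1|^{1/N}|\Omega|^{(N-1)/N}$ with the lower bound $|\Omega|\ge|B_1|r_i^N$ (guaranteed by the interior sphere condition \eqref{A4bis}) to conclude $c\le\tfrac{d_\Omega}{2N}+\tfrac{K}{N|B_1|r_i^{N-1}}$ whenever $\mathcal{H}^{N-1}(\partial^*\omega)\le 1$. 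This gives the claimed dependence of $C$ on $N,r_i,d_\Omega,K$ alone.

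Finally, the regime $\mathcal{H}^{N-1}(\partial^*\omega)>1$ is handled trivially: by \eqref{eq:Ri}, $\rho_e-\rho_i\le d_\Omega$, so the bound holds with $C:=d_\Omega$; replacing $C$ by the maximum of the two constants produced above completes the proof. I do not expect any genuine difficulty beyond this bookkeeping, since all the analytic work has already been absorbed into Theorem \ref{thm:tubular.relaxed_stability_radii} (and hence into Theorem \ref{thm:Tubularneighborood-serrin-W22-stability}, whose Poincar\'e-type machinery on the tubular set $\Omega^c_{r_i}$ is what allows us to avoid any regularity hypothesis on $\partial\omega$ beyond finite perimeter).
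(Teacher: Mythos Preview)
Your proposal is correct and follows essentially the same approach as the paper, which presents Theorem~\ref{IUhtkS45} as a direct consequence of Theorem~\ref{thm:tubular.relaxed_stability_radii} without further detail; the bookkeeping you supply (verifying~\eqref{8567487hjifhew} with $\psi(\eta)=\max\{K,K^3\}\,\eta$, then eliminating the dependence on $\max_{\overline{\Omega^c_{r_i/2}}}|\nabla u|$ via the subharmonicity of $|\nabla u|^2$ and the bound on $c$) is precisely the argument the paper carries out explicitly in the analogous proof of Theorem~\ref{JAHS334}.
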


The proof of Theorem~\ref{MAIN-TRE} is now a plain consequence of
Theorem~\ref{IUhtkS45}.

Of course, from Theorems \ref{thm:relaxed_stab_pseudodistance},
\ref{thm:relaxed_stab_Asymmetry}, \ref{thm:Johnrelaxed_radiistability}, and \ref{thm:tubular.relaxed_stability_radii}, we can also deduce the corresponding generalizations of Theorem~\ref{thm:blowup}.
For instance, from Theorem \ref{thm:tubular.relaxed_stability_radii} we deduce:

\begin{thm}
Let $u \in C^2 (\ol{\Om} \setminus \om)$
satisfy \eqref{eq:problem} and \eqref{eq:overdetermination},
and suppose that~$u \le 0$ on $\pa \om$.
Let assumption \eqref{A4bis} be verified, 
and~$z$ be as in~\eqref{eq:choicezperPoincaresutubularneighborood}.  
Let $\om$ be the union of finitely many
disjoint balls
of radius~$\ve$ and assume that 
$$
\|u\|_{L^\infty(\pa\om)}+\ve\|\nabla u\|_{L^\infty(\pa\om)}+
\ve^2\|\nabla^2 u\|_{L^\infty(\pa\om)} = o(\ve^{ \frac{4-N}{3} })
.$$
Suppose that~$z\in \Om$. Then~$
\rho_e-\rho_i$ is as small as we wish for small~$\ve$.
\end{thm}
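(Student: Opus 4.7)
The plan is to apply Theorem \ref{thm:tubular.relaxed_stability_radii} directly, so everything reduces to verifying that the five boundary integrals in \eqref{8567487hjifhew} can be bounded by a continuous function $\psi$ of $\eta:=\bar d_\om = 2\ve$ with $\psi(0)=0$. Since $\om$ consists of $k$ disjoint $\ve$-balls with $k$ fixed, one has $\cH^{N-1}(\pa^*\om) = kN|B_1|\ve^{N-1}$, which already controls $\pa^*\om$ up to a fixed multiplicative constant.

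Set $A := \|u\|_{L^\infty(\pa\om)}$, $B := \|\na u\|_{L^\infty(\pa\om)}$, $C := \|\na^2 u\|_{L^\infty(\pa\om)}$; the hypothesis splits into the three separate bounds $A = o(\ve^{(4-N)/3})$, $B = o(\ve^{(1-N)/3})$, $C = o(\ve^{-(N+2)/3})$. Estimating each integrand pointwise by the appropriate sup norm on $\pa\om$ and pulling it out of $\cH^{N-1}(\pa^*\om) = O(\ve^{N-1})$, the five quantities in \eqref{8567487hjifhew} become, respectively, of order $A\ve^{N-1}$, $AB\ve^{N-1}$, $B^2\ve^{N-1}$, $B^3\ve^{N-1}$, and $ABC\ve^{N-1}$. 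Feeding in the explicit rates above, the worst pair are the two terms that are cubic in first derivatives, namely $B^3\ve^{N-1}$ and $ABC\ve^{N-1}$: both reduce to $o(\ve^{1-N})\cdot O(\ve^{N-1}) = o(1)$, while the remaining three enjoy strictly positive residual powers of $\ve$. Hence each of the five quantities tends to $0$ as $\ve\to 0$, and any common continuous majorant of the form $\psi(\eta)$ (with $\psi(0)=0$) provides the function required in \eqref{8567487hjifhew}.

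The conclusion then follows at once from Theorem \ref{thm:tubular.relaxed_stability_radii}, which yields $\rho_e-\rho_i \le C\,\psi(2\ve)^{\tau_N/2}\to 0$ as $\ve\to 0$. There is no real obstacle here: the only thing deserving mention is that the exponent $(4-N)/3$ appearing in the hypothesis is precisely calibrated so that the worst cubic terms $B^3\ve^{N-1}$ and $ABC\ve^{N-1}$ remain $o(1)$ — any slower decay of $A+\ve B+\ve^2 C$ would fail to control these terms, while any faster decay would be wasteful. All the harder analytic content (the Rellich–Pohozaev identity, the $P$-function manipulation, and the Poincaré inequalities on the complementary parallel set $\Om^c_{r_i}$) has been absorbed into Theorem \ref{thm:tubular.relaxed_stability_radii}, so what remains in this final statement is only the bookkeeping of exponents sketched above.
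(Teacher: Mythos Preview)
Your proof is correct and follows precisely the route the paper indicates: the paper gives no proof for this theorem, merely noting that it is deduced from Theorem~\ref{thm:tubular.relaxed_stability_radii}, and your argument fills in exactly the exponent bookkeeping needed to verify \eqref{8567487hjifhew}. One cosmetic point: in listing the orders of the five integrals you interchanged the third and fourth (the third, $\left|\int_{\pa^*\om}|\na u|^2 u_\nu\right|$, is $O(B^3\ve^{N-1})$ and the fourth, $\int_{\pa^*\om}|\na u|^2$, is $O(B^2\ve^{N-1})$), but this does not affect the conclusion since both are $o(1)$.
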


\begin{appendix}

\section{Motivation from an optimal heating problem (with possible malfunctioning of the source)}\label{APP:he}

In this section we briefly recall how the simple model
from optimal heating described in~\eqref{0-0-2} 
directly produces the overdetermined condition in~\eqref{0-0-3}.
For this, we consider a divergence free vector field~$v$.
Also, for small~$t\ge0$, we introduce the diffeomorphism given by
$$ \Phi^t(x):=x+tv(x).$$
We set~$\Omega^t:=\Phi^t(\Omega)$ and, given a source~$f\ge0$, we let~$u^t$ be the solution of
$$ \begin{cases}
\Delta u^t =f &{\mbox{ in }}\Omega^t,\\
u^t=0 &{\mbox{ on }}\partial\Omega^t.
\end{cases}$$
We consider the energy functional
$$ I(t):=\frac12\int_{\Omega^t} |\nabla u^t(x)|^2\,dx.$$
We also define
$$ \psi(x,t):=
\frac12 |\nabla u^t(x)|^2.$$
In this way, we have that
\begin{equation*}
\partial_t\psi(x,t)=\nabla u^t(x)\cdot\nabla \partial_t u^t(x)
\end{equation*}
and
$$ I(t)=\int_{\Omega^t} \psi(x,t)\,dx.$$
By the Hadamard's Differentiation Formula (see Theorem~5.2.2
in~\cite{HP}), we know that
\begin{eqnarray*}
I'(0)&=&\int_\Omega\partial_t\psi(x,0)\,dx+
\int_{\partial\Omega} \psi(x,0)\,<\nu(x),v(x)>\,dS_x\\
&=& \int_\Omega
\nabla u^0(x)\cdot\nabla \partial_t u^0(x)\,dx+
\frac12 \int_{\partial\Omega} |\nabla u^0(x)|^2\,<\nu(x),v(x)>\,dS_x.
\end{eqnarray*}
Since~$\Phi^t(x)\in\partial\Omega^t$ for all~$x\in\partial\Omega$, we have that
$$ u^t(\Phi^t(x))=0$$
for all~$x\in\partial\Omega$, and so, taking derivatives in~$t$,
\begin{equation}\label{HA63} \partial_t u^0(x)+<\nabla u^0(x),v(x)>\,=\,\partial_t u^t(x)+<
\nabla u^t(\Phi^t(x)),\partial_t\Phi^t(x)>\big|_{t=0}\,=\,0.\end{equation}
As a consequence,
\begin{eqnarray*}
&& \int_\Omega
\nabla u^0(x)\cdot\nabla \partial_t u^0(x)\,dx=
\int_\Omega
{\rm div} \big( \partial_t u^0(x)\nabla u^0(x)\big)\,dx-
\int_\Omega \partial_t u^0(x) f(x)\,dx
\\&&\qquad=
\int_{\partial\Omega}
\partial_t u^0(x)\,<\nabla u^0(x),\nu(x)>\,dS_x-
\int_\Omega \partial_t u^0(x) f(x)\,dx
\\&&\qquad=-
\int_{\partial\Omega}
<\nabla u^0(x),\nu(x)> \,<\nabla u^0(x),v(x)>\,dS_x-
\int_\Omega \partial_t u^0(x) f(x)\,dx,
\end{eqnarray*}
and therefore
\begin{equation}\label{4:1}
\begin{split}&
I'(0)=-
\int_{\partial\Omega}
<\nabla u^0(x),\nu(x)> \,<\nabla u^0(x),v(x)>\,dS_x\\&\qquad\qquad\qquad-
\int_\Omega \partial_t u^0(x) f(x)\,dx+
\frac12 \int_{\partial\Omega} |\nabla u^0(x)|^2\,<\nu(x),v(x)>\,dS_x.
\end{split}\end{equation}
We also remark that, since~$u^0=u=0$ on~$\partial\Omega$,
we have that
\begin{equation}\label{NA2}
\nu=\frac{\nabla u}{|\nabla u|}.\end{equation} Thus,
\begin{equation*}\begin{split}&
<\nabla u^0,\nu> \,<\nabla u^0,v>\,
=\,|\nabla u|^2\,<\nu,v>\\&\qquad\,=\,(
<\nabla u,\nu>)^2\,<\nu,v>\,=\,u_\nu^2\,<\nu,v>.
\end{split}\end{equation*}
In this way, \eqref{4:1} can be written as
\begin{equation}\label{4:2}
\begin{split}&
I'(0)=-
\int_\Omega \partial_t u^0 f\,dx-
\frac12 \int_{\partial\Omega} u_\nu^2\,<\nu,v>\,dS_x.
\end{split}\end{equation}
We also observe that~$\Delta\partial_tu^0=\partial_t\Delta u^0=\partial_t f=0$
in~$\Omega$, hence
\begin{eqnarray*}&&
\int_\Omega \partial_t u^0 f\,dx
=\int_\Omega \partial_t u^0 \Delta u\,dx
=\int_\Omega \big(\partial_t u^0 \Delta u-
\Delta\partial_tu^0\,u\big)\,dx\\
&&\qquad=\int_\Omega {\rm div}\,
\big(\partial_t u^0 \nabla u-
\nabla\partial_tu^0\,u\big)\,dx\\
&&\qquad=\int_{\partial\Omega}
<\partial_t u^0 \nabla u-
\nabla\partial_tu^0\,u,\,\nu>\,dS_x\\
&&\qquad=\int_{\partial\Omega}
\partial_t u^0 \,u_\nu \,dS_x.
\end{eqnarray*}
This, \eqref{HA63} and~\eqref{NA2}
entail that
\begin{eqnarray*}&&
\int_\Omega \partial_t u^0 f\,dx
=-\int_{\partial\Omega}
<\nabla u^0,v> \,u_\nu \,dS_x=
-\int_{\partial\Omega}
u_\nu^2<\nu,v> \,dS_x.
\end{eqnarray*}
Consequently, \eqref{4:2} becomes
\begin{equation*}
I'(0)=
\frac12 \int_{\partial\Omega} u_\nu^2\,<\nu,v>\,dS_x.
\end{equation*}
That is, being~$I$ stationary for all divergence free vector fields
is equivalent to
the constancy of~$u_\nu$, that is~\eqref{0-0-3}.

\end{appendix}

\section*{Acknowledgements}

The authors are supported by the Australian Research Council Discovery Project DP170104880 ``N.E.W. Nonlocal Equations at Work''
and are members of AustMS and INdAM/GNAMPA.
  
SD is supported by the DECRA Project DE180100957 ``PDEs, free boundaries and applications''.

\begin{bibdiv}
\begin{biblist}

\bib{AB}{article}{
   author={Aftalion, Amandine},
   author={Busca, J\'{e}r\^{o}me},
   title={Radial symmetry of overdetermined boundary-value problems in
   exterior domains},
   journal={Arch. Rational Mech. Anal.},
   volume={143},
   date={1998},
   number={2},
   pages={195--206},
   issn={0003-9527},
   review={\MR{1650014}},
   doi={10.1007/s002050050103},
}

\bib{ABR}{article}{
   author={Aftalion, Amandine},
   author={Busca, J\'{e}r\^{o}me},
   author={Reichel, Wolfgang},
   title={Approximate radial symmetry for overdetermined boundary value
   problems},
   journal={Adv. Differential Equations},
   volume={4},
   date={1999},
   number={6},
   pages={907--932},
   issn={1079-9389},
   review={\MR{1729395}},
}

\bib{Ai}{article}{
   author={Aikawa, Hiroaki},
   title={Potential analysis on nonsmooth domains---Martin boundary and
   boundary Harnack principle},
   conference={
      title={Complex analysis and potential theory},
   },
   book={
      series={CRM Proc. Lecture Notes},
      volume={55},
      publisher={Amer. Math. Soc., Providence, RI},
   },
   date={2012},
   pages={235--253},
   review={\MR{2986906}},
}

%
%

\bib{BNST}{article}{
   author={Brandolini, Barbara},
   author={Nitsch, Carlo},
   author={Salani, Paolo},
   author={Trombetti, Cristina},
   title={On the stability of the Serrin problem},
   journal={J. Differential Equations},
   volume={245},
   date={2008},
   number={6},
   pages={1566--1583},
   issn={0022-0396},
   review={\MR{2436453}},
   doi={10.1016/j.jde.2008.06.010},
}

\bib{Br}{book}{
   author={Brudnyi, Alexander},
   author={Brudnyi, Yuri},
   title={Methods of geometric analysis in extension and trace problems.
   Volume 1},
   series={Monographs in Mathematics},
   volume={102},
   publisher={Birkh\"{a}user/Springer Basel AG, Basel},
   date={2012},
   pages={xxiv+560},
   isbn={978-3-0348-0208-6},
   review={\MR{2882877}},
}

\bib{CMV}{article}{
   author={Ciraolo, Giulio},
   author={Magnanini, Rolando},
   author={Vespri, Vincenzo},
   title={H\"{o}lder stability for Serrin's overdetermined problem},
   journal={Ann. Mat. Pura Appl. (4)},
   volume={195},
   date={2016},
   number={4},
   pages={1333--1345},
   issn={0373-3114},
   review={\MR{3522349}},
   doi={10.1007/s10231-015-0518-7},
}

\bib{CV1}{article}{
   author={Ciraolo, Giulio},
   author={Vezzoni, Luigi},
   title={A sharp quantitative version of Alexandrov's theorem via the
   method of moving planes},
   journal={J. Eur. Math. Soc. (JEMS)},
   volume={20},
   date={2018},
   number={2},
   pages={261--299},
   issn={1435-9855},
   review={\MR{3760295}},
   doi={10.4171/JEMS/766},
}

\bib{CV2}{article}{
   author={Ciraolo, Giulio},
   author={Vezzoni, Luigi},
   title={On Serrin's overdetermined problem in space forms},
   journal={Manuscripta Math.},
   volume={159},
   date={2019},
   number={3-4},
   pages={445--452},
   issn={0025-2611},
   review={\MR{3959271}},
   doi={10.1007/s00229-018-1079-z},
}

\bib{Da}{article}{
   author={Danchin, Rapha\"{e}l},
   title={Navier-Stokes equations with variable density},
   conference={
      title={Hyperbolic problems and related topics},
   },
   book={
      series={Grad. Ser. Anal.},
      publisher={Int. Press, Somerville, MA},
   },
   date={2003},
   pages={121--135},
   review={\MR{2056846}},
}

\bib{EP}{article}{
  author={Enciso, Alberto},
  author={Peralta-Salas, Daniel},
  title={Symmetry for an overdetermined boundary problem in a punctured domain},
  journal={Nonlinear Analysis: Theory, Methods \& Applications},
  volume={70},
  number={2},
  pages={1080--1086},
  year={2009},
  publisher={Elsevier}
}

\bib{FV1}{article}{
   author={Farina, Alberto},
   author={Valdinoci, Enrico},
   title={Flattening results for elliptic PDEs in unbounded domains with
   applications to overdetermined problems},
   journal={Arch. Ration. Mech. Anal.},
   volume={195},
   date={2010},
   number={3},
   pages={1025--1058},
   issn={0003-9527},
   review={\MR{2591980}},
   doi={10.1007/s00205-009-0227-8},
}

\bib{FV2}{article}{
   author={Farina, Alberto},
   author={Valdinoci, Enrico},
   title={Overdetermined problems in unbounded domains with Lipschitz
   singularities},
   journal={Rev. Mat. Iberoam.},
   volume={26},
   date={2010},
   number={3},
   pages={965--974},
   issn={0213-2230},
   review={\MR{2789372}},
   doi={10.4171/RMI/623},
}

\bib{FV3}{article}{
   author={Farina, Alberto},
   author={Valdinoci, Enrico},
   title={Partially and globally overdetermined problems of elliptic type},
   journal={Adv. Nonlinear Anal.},
   volume={1},
   date={2012},
   number={1},
   pages={27--45},
   issn={2191-9496},
   review={\MR{3033175}},
   doi={10.1515/ana-2011-0002},
}

\bib{FV4}{article}{
   author={Farina, Alberto},
   author={Valdinoci, Enrico},
   title={On partially and globally overdetermined problems of elliptic
   type},
   journal={Amer. J. Math.},
   volume={135},
   date={2013},
   number={6},
   pages={1699--1726},
   issn={0002-9327},
   review={\MR{3145008}},
   doi={10.1353/ajm.2013.0052},
}

\bib{Fe}{article}{
   author={Feldman, William M.},
   title={Stability of Serrin's problem and dynamic stability of a model for
   contact angle motion},
   journal={SIAM J. Math. Anal.},
   volume={50},
   date={2018},
   number={3},
   pages={3303--3326},
   issn={0036-1410},
   review={\MR{3817760}},
   doi={10.1137/17M1143009},
}

\bib{FG}{article}{
   author={Fragal\`a, Ilaria},
   author={Gazzola, Filippo},
   title={Partially overdetermined elliptic boundary value problems},
   journal={J. Differential Equations},
   volume={245},
   date={2008},
   number={5},
   pages={1299--1322},
   issn={0022-0396},
   review={\MR{2436831}},
   doi={10.1016/j.jde.2008.06.014},
}

\bib{FGK}{article}{
   author={Fragal\`a, Ilaria},
   author={Gazzola, Filippo},
   author={Kawohl, Bernd},
   title={Overdetermined problems with possibly degenerate ellipticity, a
   geometric approach},
   journal={Math. Z.},
   volume={254},
   date={2006},
   number={1},
   pages={117--132},
   issn={0025-5874},
   review={\MR{2232009}},
   doi={10.1007/s00209-006-0937-7},
}

\bib{FGLP}{article}{
   author={Fragal\`a, Ilaria},
   author={Gazzola, Filippo},
   author={Lamboley, Jimmy},
   author={Pierre, Michel},
   title={Counterexamples to symmetry for partially overdetermined elliptic
   problems},
   journal={Analysis (Munich)},
   volume={29},
   date={2009},
   number={1},
   pages={85--93},
   issn={0174-4747},
   review={\MR{2524107}},
   doi={10.1524/anly.2009.1016},
}

\bib{Fr}{book}{
   author={Friedman, Avner},
   title={Partial differential equations},
   edition={Corrected reprint of the original edition},
   publisher={Robert E. Krieger Publishing Co., Huntington, N.Y.},
   date={1976},
   pages={i+262},
   review={\MR{0454266}},
}

\bib{GL}{article}{
   author={Garofalo, Nicola},
   author={Lewis, John L.},
   title={A symmetry result related to some overdetermined boundary value
   problems},
   journal={Amer. J. Math.},
   volume={111},
   date={1989},
   number={1},
   pages={9--33},
   issn={0002-9327},
   review={\MR{980297}},
   doi={10.2307/2374477},
}

\bib{GS}{article}{
   author={Garofalo, Nicola},
   author={Sartori, Elena},
   title={Symmetry in exterior boundary value problems for quasilinear
   elliptic equations via blow-up and a priori estimates},
   journal={Adv. Differential Equations},
   volume={4},
   date={1999},
   number={2},
   pages={137--161},
   issn={1079-9389},
   review={\MR{1674355}},
}

\bib{GT}{book}{
   author={Gilbarg, David},
   author={Trudinger, Neil S.},
   title={Elliptic partial differential equations of second order},
   series={Classics in Mathematics},
   note={Reprint of the 1998 edition},
   publisher={Springer-Verlag, Berlin},
   date={2001},
   pages={xiv+517},
   isbn={3-540-41160-7},
   review={\MR{1814364}},
}

\bib{Giusti}{book}{
      author={Giusti, Enrico},
   title={Minimal surfaces and functions of bounded variation},
   series={Monographs in Mathematics},
   volume={80},
   publisher={Birkh\"{a}user Verlag, Basel},
   date={1984},
   pages={xii+240},
   isbn={0-8176-3153-4},
   review={\MR{775682}},
   doi={10.1007/978-1-4684-9486-0},
}

\bib{MR1808686}{article}{
   author={Greco, Antonio},
   title={Radial symmetry and uniqueness for an overdetermined problem},
   journal={Math. Methods Appl. Sci.},
   volume={24},
   date={2001},
   number={2},
   pages={103--115},
   issn={0170-4214},
   review={\MR{1808686}},
   doi={10.1002/1099-1476(20010125)24:2<103::AID-MMA200>3.0.CO;2-F},
}

\bib{MR2002730}{article}{
   author={Greco, Antonio},
   title={Symmetry around the origin for some overdetermined problems},
   journal={Adv. Math. Sci. Appl.},
   volume={13},
   date={2003},
   number={1},
   pages={387--399},
   issn={1343-4373},
   review={\MR{2002730}},
}

\bib{GX}{article}{
   author={Guo, Jinyu},
   author={Xia, Chao},
   title={A partially overdetermined problem in a half ball},
   journal={Calc. Var. Partial Differential Equations},
   volume={58},
   date={2019},
   number={5},
   pages={Art. 160, 15},
   issn={0944-2669},
   review={\MR{4010636}},
   doi={10.1007/s00526-019-1603-3},
}

\bib{HP}{book}{
   author={Henrot, Antoine},
   author={Pierre, Michel},
   title={Shape variation and optimization},
   series={EMS Tracts in Mathematics},
   volume={28},
   note={A geometrical analysis;
   English version of the French publication [ MR2512810] with additions and
   updates},
   publisher={European Mathematical Society (EMS), Z\"{u}rich},
   date={2018},
   pages={xi+365},
   isbn={978-3-03719-178-1},
   review={\MR{3791463}},
   doi={10.4171/178},
}

\bib{Hj}{book}{
   author={Hjelmstad, Keith D.},
   title={Fundamentals of Structural Mechanics},
   edition={Second edition},
   publisher={Springer, New York},
   date={2005},
}

\bib{Hu}{article}{
   author={Hurri, Ritva},
   title={Poincar\'{e} domains in ${\bf R}^n$},
   journal={Ann. Acad. Sci. Fenn. Ser. A I Math. Dissertationes},
   number={71},
   date={1988},
   pages={42},
   issn={0355-0087},
   review={\MR{978019}},
}

\bib{HS}{article}{
   author={Hurri-Syrj\"{a}nen, Ritva},
   title={An improved Poincar\'{e} inequality},
   journal={Proc. Amer. Math. Soc.},
   volume={120},
   date={1994},
   number={1},
   pages={213--222},
   issn={0002-9939},
   review={\MR{1169032}},
   doi={10.2307/2160188},
}

\bib{Kra}{article}{
 author={Krantz, Steven G.},
   title={Lipschitz spaces, smoothness of functions, and approximation
   theory},
   journal={Exposition. Math.},
   volume={1},
   date={1983},
   number={3},
   pages={193--260},
   issn={0723-0869},
   review={\MR{782608}},
}

\bib{KrantzParks-distance}{article}{
   author={Krantz, Steven G.},
   author={Parks, Harold R.},
   title={Distance to $C^{k}$ hypersurfaces},
   journal={J. Differential Equations},
   volume={40},
   date={1981},
   number={1},
   pages={116--120},
   issn={0022-0396},
   review={\MR{614221}},
   doi={10.1016/0022-0396(81)90013-9},
}

\bib{KP}{book}{
   author={Krantz, Steven G.},
   author={Parks, Harold R.},
   title={The geometry of domains in space},
   series={Birkh\"{a}user Advanced Texts: Basler Lehrb\"{u}cher. [Birkh\"{a}user
   Advanced Texts: Basel Textbooks]},
   publisher={Birkh\"{a}user Boston, Inc., Boston, MA},
   date={1999},
   pages={x+308},
   isbn={0-8176-4097-5},
   review={\MR{1730695}},
   doi={10.1007/978-1-4612-1574-5},
}

\bib{KP-Primer}{book}{
  author={Krantz, Steven G.},
   author={Parks, Harold R.},
   title={A primer of real analytic functions},
   series={Birkh\"{a}user Advanced Texts: Basler Lehrb\"{u}cher. [Birkh\"{a}user
   Advanced Texts: Basel Textbooks]},
   edition={2},
   publisher={Birkh\"{a}user Boston, Inc., Boston, MA},
   date={2002},
   pages={xiv+205},
   isbn={0-8176-4264-1},
   review={\MR{1916029}},
   doi={10.1007/978-0-8176-8134-0},
}
		
\bib{Leoni}{book}{
  author={Leoni, Giovanni},
   title={A first course in Sobolev spaces},
   series={Graduate Studies in Mathematics},
   volume={181},
   edition={2},
   publisher={American Mathematical Society, Providence, RI},
   date={2017},
   pages={xxii+734},
   isbn={978-1-4704-2921-8},
   review={\MR{3726909}},
}

\bib{Maggi}{book}{
    author={Maggi, Francesco},
   title={Sets of finite perimeter and geometric variational problems},
   series={Cambridge Studies in Advanced Mathematics},
   volume={135},
   note={An introduction to geometric measure theory},
   publisher={Cambridge University Press, Cambridge},
   date={2012},
   pages={xx+454},
   isbn={978-1-107-02103-7},
   review={\MR{2976521}},
   doi={10.1017/CBO9781139108133},
}

\bib{Ma}{article}{
   author={Magnanini, Rolando},
   title={Alexandrov, Serrin, Weinberger, Reilly: simmetry and stability by
   integral identities},
   conference={
      title={Bruno Pini Mathematical Analysis Seminar 2017},
   },
   book={
      series={Bruno Pini Math. Anal. Semin.},
      volume={8},
      publisher={Univ. Bologna, Alma Mater Stud., Bologna},
   },
   date={2017},
   pages={121--141},
   review={\MR{3893584}},
}

\bib{MP1}{article}{
   author={Magnanini, Rolando},
   author={Poggesi, Giorgio},
   title={On the stability for Alexandrov's soap bubble theorem},
   journal={J. Anal. Math.},
   volume={139},
   date={2019},
   number={1},
   pages={179--205},
   issn={0021-7670},
   review={\MR{4041100}},
   doi={10.1007/s11854-019-0058-y},
}

\bib{MP2}{article}{
   author={Magnanini, Rolando},
   author={Poggesi, Giorgio},
   title={Serrin's problem and Alexandrov's soap bubble theorem: enhanced
   stability via integral identities},
  journal={to appear on Indiana Univ. Math. J., Preprint on arXiv e-prints},
  date={2017},
  eprint = {1708.07392},
  }

\bib{MP3}{article}{
   author={Magnanini, Rolando},
   author={Poggesi, Giorgio},
   title={Nearly optimal stability for Serrin's problem and the soap bubble
   theorem},
   journal={Calc. Var. Partial Differential Equations},
   volume={59},
   date={2020},
   number={1},
   pages={Art. 35, 23},
   issn={0944-2669},
   review={\MR{4054869}},
   doi={10.1007/s00526-019-1689-7},
}

\bib{MP4}{article}{
       author = {Magnanini, Rolando},
       author={Poggesi, Giorgio},
        title ={An interpolating inequality for solutions of uniformly elliptic equations},
      journal = {arXiv e-prints},
         date = {2020},
       eprint = {2002.04332},
}

\bib{MS}{article}{
   author={Martio, O.},
   author={Sarvas, J.},
   title={Injectivity theorems in plane and space},
   journal={Ann. Acad. Sci. Fenn. Ser. A I Math.},
   volume={4},
   date={1979},
   number={2},
   pages={383--401},
   issn={0066-1953},
   review={\MR{565886}},
   doi={10.5186/aasfm.1978-79.0413},
}

\bib{NV}{article}{
   author={N\"{a}kki, Raimo},
   author={V\"{a}is\"{a}l\"{a}, Jussi},
   title={John disks},
   journal={Exposition. Math.},
   volume={9},
   date={1991},
   number={1},
   pages={3--43},
   issn={0723-0869},
   review={\MR{1101948}},
}

\bib{PS}{article}{
   author={Payne, L. E.},
   author={Schaefer, Philip W.},
   title={Duality theorems in some overdetermined boundary value problems},
   journal={Math. Methods Appl. Sci.},
   volume={11},
   date={1989},
   number={6},
   pages={805--819},
   issn={0170-4214},
   review={\MR{1021402}},
   doi={10.1002/mma.1670110606},
}

\bib{Pog}{article}{
   author={Poggesi, Giorgio},
   title={Radial symmetry for $p$-harmonic functions in exterior and
   punctured domains},
   journal={Appl. Anal.},
   volume={98},
   date={2019},
   number={10},
   pages={1785--1798},
   issn={0003-6811},
   review={\MR{3977217}},
   doi={10.1080/00036811.2018.1460819},
}

\bib{Pog2}{book}{
   author={Poggesi, Giorgio},
   title={The Soap Bubble Theorem and Serrin's problem:
quantitative symmetry},
   note={Thesis (Ph.D.)--University of Florence},
   date={2018},
}

\bib{ROV}{article}{
   author={Ros-Oton, Xavier},
   author={Valdinoci, Enrico},
   title={The Dirichlet problem for nonlocal operators with singular
   kernels: convex and nonconvex domains},
   journal={Adv. Math.},
   volume={288},
   date={2016},
   pages={732--790},
   issn={0001-8708},
   review={\MR{3436398}},
   doi={10.1016/j.aim.2015.11.001},
}

\bib{Se}{article}{
   author={Serrin, James},
   title={A symmetry problem in potential theory},
   journal={Arch. Rational Mech. Anal.},
   volume={43},
   date={1971},
   pages={304--318},
   issn={0003-9527},
   review={\MR{333220}},
   doi={10.1007/BF00250468},
}

\bib{So}{book}{
   author={Sokolnikoff, I. S.},
   title={Mathematical theory of elasticity},
   note={2d ed},
   publisher={McGraw-Hill Book Company, Inc., New York-Toronto-London},
   date={1956},
   pages={xi+476},
   review={\MR{0075755}},
}

\bib{Vo}{article}{
   author={Vogel, Andrew L.},
   title={Symmetry and regularity for general regions having a solution to
   certain overdetermined boundary value problems},
   journal={Atti Sem. Mat. Fis. Univ. Modena},
   volume={40},
   date={1992},
   number={2},
   pages={443--484},
   issn={0041-8986},
   review={\MR{1200301}},
}

\bib{We}{article}{
   author={Weinberger, H. F.},
   title={Remark on the preceding paper of Serrin},
   journal={Arch. Rational Mech. Anal.},
   volume={43},
   date={1971},
   pages={319--320},
   issn={0003-9527},
   review={\MR{333221}},
   doi={10.1007/BF00250469},
}

\bib{Wh}{article}{
    author={Whitney, Hassler},
   title={Analytic extensions of differentiable functions defined in closed
   sets},
   journal={Trans. Amer. Math. Soc.},
   volume={36},
   date={1934},
   number={1},
   pages={63--89},
   issn={0002-9947},
   review={\MR{1501735}},
   doi={10.2307/1989708},
}

\bib{Wh-boundaries}{article}{
   author={Whitney, Hassler},
   title={Functions differentiable on the boundaries of regions},
   journal={Ann. of Math. (2)},
   volume={35},
   date={1934},
   number={3},
   pages={482--485},
   issn={0003-486X},
   review={\MR{1503174}},
   doi={10.2307/1968745},
}

\end{biblist}
\end{bibdiv}

\end{document}